\documentclass[12pt]{amsart}

\usepackage{graphicx, fullpage}
\usepackage{amsmath,amssymb,amsthm,amscd, bm}
\usepackage{fancyhdr}
\usepackage[mathscr]{eucal}
\usepackage{amsfonts}
\usepackage[T1]{fontenc}
\usepackage{xspace}
\usepackage{esint}
\parskip=6pt

\theoremstyle{plain}
\newtheorem {thm}{Theorem}[section]
\newtheorem {lem}[thm]{Lemma}
\newtheorem {cor}[thm]{Corollary}
\newtheorem {defn}[thm]{Definition}
\newtheorem {prop}[thm]{Proposition}
\numberwithin{equation}{section}


\def\cal{\mathcal}
\newcommand{\ddb}{\partial\bar\partial}

\newcommand{\cO}{{\cal O}}
\newcommand{\cE}{{\cal E}}

\newcommand{\cK}{{\cal K}}

\newcommand{\cH}{{\cal H}}
\newcommand{\cL}{{\cal L}}

\newcommand{\cV}{{\cal V}}

\newcommand{\fg}{\mathfrak g}
\newcommand{\fh}{\mathfrak h}
\newcommand{\fu}{\mathfrak{u}}

\newcommand{\bC}{\mathbb C}
\newcommand{\bP}{\mathbb P}
\newcommand{\bN}{\mathbb N}

\newcommand{\bR}{\mathbb R}

\newcommand{\one}{\mathcal O_{\bP F}(1)}
\newcommand{\onb}{\mathcal O_{\bP F_b}(1)}

\newcommand{\tr}{\text{tr}}

\newcommand{\var}{\varepsilon}

\newcommand{\GL}{\text{GL}}
\newcommand{\SU}{\text{SU}}
\newcommand{\SL}{\text{SL}}

\newcommand{\dist}{\text{dist}}

\newcommand{\id}{\text{id}}
\newcommand{\po}{\text{Psh}(\omega)}

\newcommand{\re}{\text{Re}\,}

\newcommand{\sg}{\text{sgrad}\,}
\newcommand{\mx}{\langle\mu,X\rangle}

\newcommand{\supp}{\text{supp\,}}
\newcommand{\vo}{\text{Vol}}

\begin{document}
\title{\bf Two variational problems in K\"ahler geometry}
\author{L\'aszl\'o Lempert}
\address{Department of  Mathematics,
Purdue University, 150N University Street, West Lafayette, IN
47907-2067, USA}
\subjclass[2020]{32Q15, 32U05, 32W20, 52A40, 58E40}

\abstract
On a K\"ahler manifold $(M,\omega)$ we consider the problems of maximizing/minimizing 
Monge--Amp\`ere energy
over certain subsets of the space of K\"ahler potentials. Under suitable assumptions we prove that
solutions to these
variational problems exist, are unique, and have a simple characterization. We then use the extremals to
construct hermitian metrics on holomorphic vector bundles, and investigate their curvature.
 
\endabstract
\maketitle
\section{Introduction}    

The pair of variational problems studied in this paper goes back to two sources. One has to do with
constructing hermitian metrics on holomorphic vector bundles; we will discuss this aspect in section 8.
The other source is certain problems in convex geometry: For example, given a
bounded set $S\subset\bR^m$, among ellipsoids centered at $0$ and containing $S$ characterize the one
of least volume. Alternatively, volume minimization can be considered without prescribing the center. This latter 
problem was posed and solved by John in the influential paper \cite{J48}, see also \cite{B97}. The result has 
also been associated with L\"owner's name. There is a dual problem as well, maximizing the volume 
of ellipsoids  inscribed in a convex set; either problem can be easily reduced to the other.

The problems have natural complex variants. For example, consider a convex $S\subset\bC^m$,
and among all hermitian ellipsoids---images of the unit ball under complex linear transformations---that
are contained in $S$ search for the one with maximal volume. This is still a convex programing 
problem, and Arocha, Bracho, and Montejano in \cite{A+22} find that John's method will provide a
solution. Related earlier work includes Gromov's, who in \cite{G67} already dealt with the problem when
$S$ is symmetric about the origin, and \cite{R86,M97} by Rabotin and Ma. 
It is noteworthy, though, that a variant of the
approach of John and Arocha et al. gives that the complex analog of John's result holds under a suitable
{\sl pseudo}convexity assumption on $S$, more natural in complex analysis than geometric convexity.

A new perspective is gained if we blow up $\bC^m$ at the origin. The blow up is the total space of the
tautological line bundle $L$ over complex projective space $\bP_{m-1}$. If $S$ was a 
balanced\footnote{This means $\lambda S\subset S$ for $\lambda\in\bC$, $|\lambda|\le 1$.} 
neighborhood of $0$,
its blow up will be the unit disc bundle of a hermitian metric $h$ on $L$. There are several ways 
to characterize $h$ that arise from hermitian ellipsoids $S$. These are metrics that can be obtained as the
dual of the Fubini--Study metric on $L^*$, composed with arbitrary $g\in\GL(m,\bC)$ acting on $L$.
Another characterization takes $-i$ times the curvature (the negative of the Chern form) of $h$, a K\"ahler form
$\omega$ on $\bP_{m-1}$; the requirement is that the scalar curvature of the metric determined by 
$\omega$ should be constant (equivalently, $\omega$ should be K\"ahler--Einstein).

This suggests a first generalization, a variational problem involving hermitian metrics on a holomorphic
line bundle $L\to M$ over a projective manifold. We will not spell out the problem at this point, because a further
generalization simplifies  matters by abandoning line bundles and hermitian metrics altogether. Instead,
consider a connected compact K\"ahler manifold $(M,\omega)$; the role of hermitian
metrics will be played by (relative) K\"ahler potentials, that form the space
\begin{equation} 
\cH=\cH(\omega)=\{u\in C^\infty(M):\omega_u=\omega+i\ddb u>0\}.
\end{equation}
Here $C^\infty(M)$ stands for the Fr\'echet space of smooth functions $M\to\bR$.

Suppose a connected compact Lie group $G$ acts on the left on $(M,\omega)$ by 
holomorphic isometries. The
action extends to a holomorphic action of the complexified group $G^\bC$
\[
G^\bC\times M\ni(g,x)\mapsto gx\in M.
\]
The induced action of $g$ on differential forms will be denoted $g^*$.
\begin{defn} 
A K\"ahler potential $u\in\cH$ is admissible if there is a $g\in G^\bC$ such that $\omega_u=g^*\omega$.
\end{defn}

Since $\omega$ and $g^*\omega$ are in the same de Rham cohomology class, 
Hodge theory gives that for every $g\in G^\bC$ there 
is a $u$ as in the definition; moreover, $g$ determines $u$ up to an additive constant. If $g\in G$, $u$ is
constant.

Given a bounded $v_0:M\to\bR$, inscribed/circumscribed ellipsoids of the convex world correspond in
our setting to admissible $v\le v_0$, respectively $v\ge v_0$. It is not hard to check that the 
logarithm of the volume of an ellipsoid corresponds to Monge--Amp\`ere energy\footnote{The notion,
along with other background material, will be reviewed in section 2.} 
$E(v)$.  (A related result is Lemma 8.2.) Accordingly, the variational problems
to be studied are
\begin{align}
\max\{E(v):v\in\cH\text{ is admissible, }v\le v_0  \}, \\ 
\min\{E(v):v\in\cH \text{ is admissible, }v\ge v_0\}. 
\end{align}

Under suitable assumptions we will show that solutions exist, are unique, and can be characterized
by a condition involving the moment map of the $G$-action, analogous to John's condition. Let $\fg$
denote the Lie algebra of $G$, $\fg^*$ its dual, and $\langle\,,\rangle$ the pairing between
$\fg^*$ and $\fg$. The (equivariant) moment map is a smooth
map $\mu:M\to \fg^*$ such that for every $X\in\fg$ the function
\[
\langle\mu,X\rangle\in C^\infty(M)
\]
is a Hamiltonian of the vector field $X_M$ on $M$ that corresponds to $X$ under the action of $G$. 
This latter condition determines
$\langle\mu, X\rangle$ up to an additive constant, that we choose so that 
$\int_M\langle\mu,X\rangle\omega^n=0$. Not every action has a moment map; we assume ours has.

The following property of $\mu$ will be relevant for uniqueness. Let $N_X\subset M$
consist of points where $\langle\mu, X\rangle$ vanishes to second order (at least), and consider the
condition:
\begin{equation} 
\text{If $X\in\fg$ is nonzero, the convex hull of $\{\mu(x):x\in N_X\}\subset\fg^*$ avoids $0\in\fg^*$.} 
\end{equation}
Again, this property is not guaranteed. In section 5 we will see examples where it holds and where it does
not hold.

To formulate our theorems, for $u\in\cH$ define the contact set $C_u\subset M$ by
\begin{equation} 
C_u=\{x\in M: u(x)=v_0(x)\}.
\end{equation}
\begin{thm} 
Given a continuous $v_0:M\to\bR$, the maximum in (1.2) is attained. An admissible $u\le v_0$, with
$\omega_u=g^*\omega$, $g\in G^\bC$, maximizes
$E(v)$ in (1.2) if and only if
\begin{equation} 
\text{the convex hull of $\{\mu(gx): x\in C_u\}\subset \fg^*$ contains 0.}
\end{equation}
Furthermore, if $\mu$ satisfies (1.4), the maximizer in (1.2) is unique.
\end{thm}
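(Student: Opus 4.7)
The plan is to recast (1.2) as an unconstrained maximization over $G^\bC$. For each $g\in G^\bC$, normalize $u_g\in\cH$ by $\omega_{u_g}=g^*\omega$ and $\int_M u_g\,\omega^n=0$; any admissible $v\le v_0$ then has the form $v=u_g-c_g$ with $c_g=\max_M(u_g-v_0)$. The cocycle $E(u+c)=E(u)+c$ rewrites (1.2) as
\[
\max_{g\in G^\bC}F(g),\qquad F(g):=E(u_g)-\max_M(u_g-v_0).
\]
Since $u_{kg}=u_g$ for $k\in G$, $F$ descends to the Riemannian symmetric space $G\backslash G^\bC$ of nonpositive curvature, whose geodesics through $[g]$ are $t\mapsto[\exp(itX)g]$ with $X\in\fg$; the contact set $C_u$ coincides with the set where $u_g-v_0$ attains its maximum.

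Next I would compute the one-sided derivatives of $F$ along such a geodesic. Writing $\phi_X:=\dot u_{g_t}|_{t=0}$, the moment-map identity yields $\phi_X(x)=\langle\mu(gx),X\rangle-c_X$, with $c_X$ fixed by the normalization of $u_g$. Changing variables by $y=gx$ and using $\int_M\langle\mu,X\rangle\omega^n=0$ gives $\frac{d}{dt}E(u_{g_t})|_{t=0}=-c_X$, while the envelope theorem gives the right- and left-derivatives of $t\mapsto\max_M(u_{g_t}-v_0)$ at $t=0$ as $\max_{C_u}\phi_X$ and $\min_{C_u}\phi_X$. Subtracting,
\[
F'_+(g;iX)=-\max_{x\in C_u}\langle\mu(gx),X\rangle,\qquad F'_-(g;iX)=-\min_{x\in C_u}\langle\mu(gx),X\rangle.
\]
At any maximizer both $F'_+\le0$ and $F'_-\ge0$ hold for every $X\in\fg$, i.e.\ $\min_{C_u}\langle\mu(g\cdot),X\rangle\le0\le\max_{C_u}\langle\mu(g\cdot),X\rangle$, which the finite-dimensional separation theorem converts into (1.6), giving the ``only if'' direction.

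For the converse I would use concavity of $F$. Classically $t\mapsto u_{g_t}$ is a Mabuchi geodesic, along which $E$ is affine, and the geodesic equation $\ddot u_{g_t}=|\op\dot u_{g_t}|^2_{\omega_{u_{g_t}}}\ge0$ makes $t\mapsto u_{g_t}(x)$ convex for each $x$; hence $t\mapsto\max_M(u_{g_t}-v_0)$ is convex and $F(g_t)$ is concave. Concavity combined with (1.6) yields $F([g'])\le F([g])$ for every $[g']\in G\backslash G^\bC$. For uniqueness under (1.4), suppose $[g]$ and $[g']=[\exp(iX)g]$ are both maxima. Then $F$ is constant on the joining geodesic, so $t\mapsto\max_M(u_{g_t}-v_0)$ is affine. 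A convex-tangent-from-below argument applied to each $x$ in the contact set $C_{1/2}$ of the midpoint forces $t\mapsto u_{g_t}(x)$ to be affine there, and the geodesic equation then gives $X_M(gx)=0$. The first-order condition at $t=0$ already gives $\langle\mu(gx),X\rangle=0$ for $x\in C_0\supset C_{1/2}$, so $gC_{1/2}\subset N_X$. Since $X_M$ vanishes on $gC_{1/2}$, those points are fixed by $\exp(iX/2)$, so $\mu(g_{1/2}x)=\mu(gx)$ for $x\in C_{1/2}$; the first-order condition at $t=1/2$ yields $0\in\mathrm{conv}\{\mu(gx):x\in C_{1/2}\}\subset\mathrm{conv}\,\mu(N_X)$, and (1.4) forces $X=0$.

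The main obstacle is existence. By left $G$-invariance and the Cartan decomposition $G^\bC=G\cdot\exp(i\fg)$, take a maximizing sequence $u_n=u_{g_n}-c_n$ with $g_n=\exp(iX_n)$. The bound $u_n\le v_0$ gives a uniform sup bound, so the family is relatively compact in $L^1(M)$, and the limit $u_\infty$ is $\omega$-psh with $E(u_\infty)\ge\limsup E(u_n)$ by upper semicontinuity of $E$. The delicate step is to show $\{X_n\}$ bounded, so that the limit is actually admissible. I would do this via a coercivity estimate in the spirit of Kempf--Ness: if $|X_n|\to\infty$ under an $\mathrm{Ad}$-invariant norm on $\fg$, a weight-decomposition analysis of $u_{\exp(iX_n)}$ shows $E(u_{g_n})-\max_M(u_{g_n}-v_0)\to-\infty$, contradicting the maximizing property. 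Once $X_n\to X_\infty$, the limit is admissible and smooth, and continuity of $F$ completes the proof.
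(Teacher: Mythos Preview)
Your approach is essentially the paper's: recast (1.2) as optimizing a function on $G^\bC$ (the paper does this in (3.6)), use geodesic convexity to prove sufficiency of (1.6), a first--order variation for necessity, and the same convexity plus the moment--map condition for uniqueness. Two points need repair.

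\medskip
\textbf{Normalization.} With your choice $\int_M u_g\,\omega^n=0$ the path $t\mapsto u_{g_t}$ is \emph{not} a Mabuchi geodesic: the genuine geodesic through $0$ is $\varphi(t)=\int_0^t(\exp i\tau X)^*h\,d\tau$ with $h=-2\langle\mu,X\rangle$, normalized by $E(\varphi(t))\equiv0$, and your $u_{g_t}$ differs from $\varphi(t)$ by a function $a(t)$ that is in general nonlinear. Your concavity claim for $F$ survives, because the $a(t)$ cancels between $E(u_{g_t})$ and $\max_M(u_{g_t}-v_0)$; but the cleanest fix is to normalize by $E(u_g)=0$ from the start (the paper's $V(g)$), so that $F(g)=-\max_M(u_g-v_0)$ and $u_{g_t}$ is a bona fide geodesic. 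Relatedly, $\dot u_{g_t}|_{t=0}(x)=-2\langle\mu(gx),X\rangle+{\rm const}$, so your formula for $\phi_X$ has a sign/factor slip; this is harmless for the separation argument but should be corrected.

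\medskip
\textbf{Existence.} The coercivity step is where your proposal is thinnest. ``Weight--decomposition in the spirit of Kempf--Ness'' is suggestive, but you need a concrete estimate. The paper's mechanism (Proposition~3.3) is exactly the geodesic convexity you already invoke: for $g=\exp iX$ one has $h_X\le v-E(v)\le g_1^*h_X$ along the geodesic, which bounds $\|X\|=\int_M|h_X|\,\omega^n$ in terms of $\sup v_0-\inf v_0$. This shows the relevant sublevel set in $G^\bC$ is compact and replaces your $L^1$--compactness/upper--semicontinuity route, which would not by itself produce an \emph{admissible} limit.

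\medskip
Your uniqueness argument via the midpoint contact set $C_{1/2}$ is a nice variant of the paper's, which instead works on $C_0$ with the probability measure $m$ furnished by (1.6); both reach $0\in\mathrm{conv}\,\mu(N_X)$ and then invoke (1.4).
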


Theorem 1.2 puts no restriction, reminiscent of convexity, on $v_0$. This is natural, since problem
(1.2) does not change if $v_0$ is replaced by its ``$\omega$--plurisubharmonic envelope''. By contrast,  
in problem (1.3) $\omega$--plurisubharmonicity will be important.

\begin{thm} 
Given a continuous $v_0:M\to\bR$, the minimum in (1.3) is attained. Assuming $\omega+i\ddb v_0\ge 0$
as a current, an admissible $u\ge v_0$, with $\omega_u=g^*\omega$, $g\in G^\bC$, minimizes
$E(v)$ in (1.3) if and only if (1.6) holds. Finally, if $\mu$ satisfies (1.4), $v_0\in C^2(M)$, and 
$\omega+i\ddb v_0>0$, then the minimizer in (1.3) is unique.
\end{thm}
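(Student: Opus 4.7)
The plan is to follow the template of Theorem 1.2, reducing the minimization over admissible $u\ge v_0$ to a finite-dimensional optimization on the quotient of $G^\bC$ parametrizing admissible potentials, with adaptations for the reversed inequality and for the newly essential role of the hypothesis $\omega+i\ddb v_0\ge 0$. For each $g\in G^\bC$ let $u_g\in\cH$ be the potential (with a fixed normalization) satisfying $\omega_{u_g}=g^*\omega$; the assignment $g\mapsto u_g$ descends to $G\backslash G^\bC$ and is continuous. Since $E(u+c)=E(u)+cV$ with $V=\int_M\omega^n$, and the constraint $u_g+c\ge v_0$ forces $c\ge c(g):=\max_M(v_0-u_g)$, the minimum on each $G^\bC$-orbit is attained at $c=c(g)$, reducing (1.3) to minimizing
\[
\Phi(g)=E(u_g)+V\,c(g)
\]
on $G\backslash G^\bC$.

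For existence, $\Phi$ is continuous, and I would establish coercivity by Cartan-decomposing $g=k\exp(iX)$ with $k\in G$, $X\in\fg$, and studying rays $t\mapsto\exp(itX)$. As in Theorem 1.2, $E(u_{\exp(itX)})$ is convex in $t$; the hypothesis $\omega+i\ddb v_0\ge 0$ furnishes the lower bound on $c(g)$ needed to conclude $\Phi(g)\to\infty$ as $\|X\|\to\infty$. This is where (1.3) diverges most from (1.2): unlike there, one cannot preprocess $v_0$ by its $\omega$-plurisubharmonic envelope, so semipositivity of $\omega+i\ddb v_0$ must be exploited directly in the coercivity estimate. Lower semicontinuity and compactness then yield a minimizer.

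For the characterization, fix a minimizer $u=u_g+c(g)$ with contact set $C_u$ and deform by $g_t=g\exp(itX)$ for $X\in\fg$. Standard moment-map computation yields $\tfrac{d}{dt}\bigr|_{t=0}u_{g_t}(x)=\mx(gx)$ (up to the additive constant enforced by normalization), and hence
\[
\frac{d}{dt}\bigg|_{t=0}E(u_{g_t})=\int_M\mx(g\cdot)\,\omega_u^n=\int_M\mx\,\omega^n=0,
\]
using $g^*\omega^n=\omega_u^n$ and the convention $\int\mx\,\omega^n=0$. By the envelope theorem the right derivative of $c(g_t)$ equals $-\min_{x\in C_u}\mx(gx)$. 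Minimality therefore reads $\min_{x\in C_u}\mx(gx)\le 0$ for every $X\in\fg$, which by Hahn--Banach is exactly (1.6). Sufficiency follows from geodesic convexity of $\Phi$ in the directions $iX$: $E$ is affine along Mabuchi geodesics and $c$ is a supremum of affine functions, so a first-order critical point is automatically a global minimum.

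For uniqueness, suppose $u^j=u_{g_j}+c(g_j)$, $j=1,2$, are minimizers. After suitably translating within the $G$-coset we may write $g_2=g_1\exp(iX)$ and set $g_t=g_1\exp(itX)$. Along this Mabuchi geodesic $E(u_{g_t})$ is affine in $t$ and $c(g_t)$ is convex; since both endpoints are global minima of $\Phi$, the convex function $\Phi(g_t)$ is constant on $[0,1]$, forcing $c(g_t)$ to be affine. By the envelope theorem applied at every $t_0\in[0,1]$, affineness of $c$ forces $\mx(g_{t_0}\cdot)$ to be constant on the contact set $C_{u^{t_0}}$, and combined with (1.6) that constant is $0$. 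Invoking now $v_0\in C^2(M)$ and $\omega+i\ddb v_0>0$ to convert this first-order vanishing on a moving contact set into genuine second-order vanishing, one obtains that $g_1x\in N_X$ for $x\in C_{u^1}$. Then (1.6) yields $0\in\mathrm{conv}\{\mu(g_1 x):x\in C_{u^1}\}\subset\mathrm{conv}\{\mu(y):y\in N_X\}$, contradicting (1.4) unless $X=0$; hence $g_1G=g_2G$ and then $c_1=c_2$. This final step is the principal obstacle: the strict positivity of $\omega+i\ddb v_0$ and the $C^2$ regularity of $v_0$ are exactly what allow one to promote first-order constancy of $\mx(g_t\cdot)$ along the moving family of contact sets into membership in $N_X$, and without them the connection to (1.4) breaks down.
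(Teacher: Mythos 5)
Your overall strategy (reduce to a finite–dimensional problem on $G^\bC$, probe with the paths $g\exp(itX)$, read off the moment condition from the first–order behaviour at the contact set, separate by Hahn--Banach) is the paper's strategy, and your necessity argument via Danskin's theorem is a legitimate rephrasing of Proposition 4.2. But there are genuine gaps. The central one is the convexity claim for $c(g_t)=\max_M\bigl(v_0-u_{g_t}\bigr)$: along $t\mapsto g\exp(itX)$ the potentials $u_{g_t}$ trace a Mabuchi geodesic, so $t\mapsto u_{g_t}(x)$ is \emph{convex} for each $x$; hence $v_0(x)-u_{g_t}(x)$ is \emph{concave}, and $c(g_t)$ is a supremum of concave, not affine, functions --- it has no convexity in general. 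This breaks both your sufficiency step (a first-order critical point of a non-convex $\Phi$ need not be a global minimum) and the opening of your uniqueness step ($\Phi(g_t)$ convex, hence constant between two minima). The correct mechanism, and the only place $\omega+i\ddb v_0\ge 0$ actually enters, is that $\{v\ge v_0\}$ is geodesically convex because the constant path at $v_0$ is a subgeodesic (Proposition 2.2); then for $y$ in the contact set $\psi(\cdot)(y)$ has a minimum at $t=0$, giving $\dot\psi(0)(y)\ge 0$ there, and integrating against the measure $m$ realizing $0\in\mathrm{conv}\,\mu(gC_u)$ yields $E(v)\ge E(u)$ directly. Relatedly, your existence argument claims to need $\omega+i\ddb v_0\ge 0$ for coercivity, but the theorem asserts attainment for \emph{every} continuous $v_0$; the paper's compactness (Proposition 3.3) follows from the two-sided geodesic estimate $h_X\le v-E(v)\le g_1^*h_X$ together with $\int_M h_X\,\omega^n=0$, with no positivity hypothesis on $v_0$, so as written you prove less than the first sentence of the statement.

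The second serious gap is in uniqueness. Having forced $\dot\psi(0)(y)=0$ for $m$-a.e.\ $y$ in the contact set, the step you describe as ``converting first-order vanishing into genuine second-order vanishing'' is exactly the analytic heart of the paper's Proposition 5.2, and it is not a routine consequence of $v_0\in C^2$ and $\omega+i\ddb v_0>0$. The paper runs the Bedford--Kalka Monge--Amp\`ere foliation of the geodesic through $(0,y)$, observes that $(w+v_0)\circ f$ is subharmonic on a leaf and touches the leafwise harmonic function $(\pi^*w+\Psi)\circ f$ from below at a boundary point with vanishing normal derivative, invokes the Hopf--Oleinik lemma to get equality along the leaf, and only then uses strict positivity of $i\ddb(w+v_0)$ to conclude the leaf is constant, hence $\psi(\cdot)(y)$ is constant; the identity $\ddot\psi(t)(y)=2\iota(JX_M)\iota(X_M)\omega=0$ then forces $X_M(y)=0$, i.e.\ $y\in N_X$, and (1.4) finishes. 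Your proposal asserts the conclusion of this chain without supplying any of it, so the link to hypothesis (1.4) is not actually established.
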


In section 5 we will see that without assuming (1.4), in both problems uniqueness may fail. Also, it 
is not hard to construct examples where (1.4) holds but $\omega+i\ddb v_0$ fails to be positive, and the 
solution of problem (1.3) is not unique.

Above we mentioned in passing that hermitian ellipsoids in $\bC^m$ correspond to K\"ahler metrics
on $\bP_{m-1}$ of constant scalar curvature. This suggests different variational problems on
general K\"ahler manifolds $(M,\omega)$: Given $v_0:M\to\bR$, maximize/minimize $E(v)$ among
constant scalar curvature potentials $v\le v_0$, respectively $v\ge v_0$ (i.e., the metric of $\omega_v$
should have constant scalar curvature). Of course, we have to assume $\cH(\omega)$ contains 
constant scalar curvature potentials. If so, no generality is lost by assuming that the metric
of $\omega$ itself has constant scalar curvature. This pair of variational problems fits with (1.2), (1.3). Indeed,
if $v$ is a potential of constant scalar curvature, then by a result of Berman and Berndtsson 
it is admissible for the action of the ``reduced'' isometry group $G$ of $(M,\omega)$, see 
Theorem 4.8 and the comment following it in \cite{BB17}. The converse is also true, rather obviously. (The
reduced isometry group is the connected subgroup of the group of holomorphic isometries with Lie algebra consisting
of Killing vector fields that vanish somewhere.) By results of Lichnerowicz and Kobayashi, \cite{K72, L67},
to which we will come shortly, such actions always admit a moment map. In conclusion, the 
variational problems for constant scalar curvature potentials are special cases of (1.2), (1.3).

If ellipsoids and K\"ahler metrics of constant scalar curvature are analogous, what corresponds in the
convex world to the general variational problems (1.2), (1.3), when $\omega$ is arbitrary? A first
guess would be for two convex bodies $S,T\subset\bC^m$ that are, say, symmetric about the origin, 
finding the linear image of $S$ that is contained in $T$ and has maximal volume among such images. This
problem was solved by V. Milman, see \cite[Theorem 14.5]{T89}
and the comments on p. 137 there; the more
general problem without symmetry was treated in \cite{G+01}. The analogy, however, is imperfect. While
in \cite[Theorem 14.5]{T89} all linear images are allowed, the spirit of our theorems would only allow images 
under $g$ in the complexification of a subgroup $G\subset\GL(\bC,m)$ that leaves $S$ invariant.
This is the reason why Theorems 1.2, 1.3 can claim uniqueness, whereas in the convex geometry
problems uniqueness fails sometimes.
Nonetheless, the criteria for the extremals in \cite[Theorem 14.5]{T89} and in \cite{G+01} on the one hand,
and in Theorems 1.2, 1.3 above on the other, are quite similar.

Problems (1.2), (1.3) suggest certain ways to construct hermitian metrics on holomorphic vector bundles.
In later sections we investigate one of these constructions, and curvature properties of the metrics obtained
through it.

Contents. In section 2 we review matters concerning plurisubharmonicity in K\"ahler
manifolds, geodesics in Mabuchi's space $\cH$ and beyond, Monge--Amp\`ere energy, the moment map, 
and complexified group actions. Sections 3,4, and 5 prove various parts of Theorems 1.2,
1.3: section 3 the existence of extremals, section 4 their characterization, and section 5 their uniqueness,
occasionally under less restrictive assumptions than what is formulated in the theorems above. 
Section 6
proves stability of the extremals and extrema under perturbation of the barrier $v_0$. In section 7 we do spell
out the variational problems for hermitian metrics on line bundles corresponding to (1.2), (1.3)---something we
abandoned earlier---and translate, in part, Theorems 1.2, 1.3 to this setting. Section 8 then introduces a
construction of hermitian metrics on vector bundles, motivated by problem (1.3), and sections 9, 10 
investigate the curvature of the hermitian metrics constructed.

During the preparation of this paper I have profited from discussions with Kuang--Ru Wu, and from his comments on earlier
versions.

\section{Background} 

This section reviews notions that will be needed further down.

{\bf Spaces of plurisubharmonic functions.}  Suppose $N$ is a complex manifold and $\Omega$ a smooth,
real $(1,1)$ form, $d\Omega=0$. A function $f:N\to[-\infty,\infty)$ is called $\Omega$-plurisubharmonic if
on every open $U\subset N$ on which $\Omega$ can be written as $i\ddb w$ with $w:U\to\bR$ smooth, 
the function
$w+f$ is plurisubharmonic. The collection of such functions is denoted Psh$(\Omega)$.
For continuous functions $f$, $\Omega$-plurisubharmonicity is equivalent to $\Omega+i\ddb f\ge 0$
as a current.

Suppose $(M,\omega)$ is an $n$ dimensional, connected compact K\"ahler manifold. The space
$\text{Psh}(\omega)$ and its subspaces (such as $\cH$ in (1.1)) have a rich geometry, see 
\cite{D15,D17,D19}. A central notion
is geodesics. After Berndtsson and Darvas, they are defined as follows \cite{B18, D19}. 
Let $a<b$ be real numbers, $S_{ab}=\{s\in\bC: a<\re s<b\}$, and $\pi: S_{ab}\times M\to M$ the projection.
\begin{defn} 
(a) A path (i.e., a continuous map) $\varphi:(a,b)\to \text{Psh}(\omega)$ is a subgeodesic if $\Phi:S\times M\to[-\infty,\infty)$ defined
by $\Phi(s,x)=\varphi(\re s)(x)$ is $\pi^*\omega$-plurisubharmonic.

(b) The geodesic $\varphi:(a,b)\to\text{Psh}(\omega)$ connecting $u,v\in\text{Psh}(\omega)$ is
\begin{equation} 
\varphi=\sup\{\psi\mid\psi:(a,b)\to\text{Psh}(\omega)\text{ is a subgeodesic, }\lim_a\psi\le u,\lim_b\psi\le v\}.
\end{equation}
\end{defn}
In (2.1) the pointwise limits exist because for subgeodesics $\psi(\cdot)(x)$ are convex functions for all
$x\in M$. Often one extends geodesics by setting $\varphi(a)=u$, $\varphi(b)=v$, and one still calls $\varphi$
thus extended a geodesic. A path $\varphi:\bR\to\po$ is called  geodesic if its restriction to any
$[a,b]\subset\bR$ is.

The geodesics of relevance to this paper are all smooth maps into $\cH\subset\text{Psh}(\omega)$. 
This class of geodesics had been introduced by Mabuchi in \cite{M87} rather differently: 
$\cH\subset C^\infty(M)$, as an open subset of a Fr\'echet space, inherits the structure of a Fr\'echet
manifold, whose tangent bundle has a canonical trivialization $T\cH\approx\cH\times C^\infty(M)$.
Mabuchi defines a Riemannian metric $g_M$ on $\cH$ by letting $\vo=\int_M\omega^n$ $(=\int_M\omega_u^n$
for all $u\in\cH$) and
\[
g_M(\xi,\eta)=\dfrac1\vo\int_M\xi\eta\,\omega_u^n,\qquad \xi,\eta\in T_u\cH\approx C^\infty(M).
\]
The metric determines a unique Levi--Civita connection $\nabla$ on $T\cH$. According to Mabuchi,
a smooth $\varphi:[a,b]\to\cH$ is a geodesic if its velocity $\dot\varphi:[a,b]\to T\cH$ is parallel along $\varphi$,
$\nabla_{\dot\varphi(t)}\dot\varphi (t)=0$, $a\le t\le b$.

That Mabuchi's geodesics are geodesics in the sense of Definition 2.1 follows from Semmes' work 
\cite{S92}. Semmes rediscovered Mabuchi's connection, and proved that a smooth $\varphi:[a,b]\to\cH$
is a geodesic for this connection if and only if the function
\begin{equation} 
\Phi: \bar S_{ab}\times M\ni(s,x)\mapsto\varphi(\re s)(x)\in\bR
\end{equation}
satisfies the Monge--Amp\`ere equation
\begin{equation} 
(\pi^*\omega+i\ddb\Phi)^{n+1}=0,\qquad\text{or, equivalently,}\qquad \text{rk}\,(\pi^*\omega+i\ddb\Phi)\equiv n.
\end{equation}
(2.3) implies that $\Phi\in\text{Psh}(\pi^*\omega)$, i.e., $\varphi$ is a subgeodesic, and it follows from
\cite{BK77} by Bedford and Kalka that $\bar S_{ab}\times M$ is foliated by graphs of smooth functions
$f:\bar S_{ab}\to M$, holomorphic on $S_{ab}$, along which $\pi^*\omega+i\ddb\Phi$ restricts to $0$.
Setting $u=\varphi(a)$, $v=\varphi(b)$, the maximum principle, applied on the leaves,
then yields that $\varphi$ dominates
all subgeodesics $\psi$ in (2.1): it is therefore the geodesic connecting $u,v$.

\begin{prop} 
If $v_0:M\to\bR$ is any function, the set $A=\{v\in\text{Psh}(\omega): v\le v_0\}$ is convex in the sense
that if the endpoints of a geodesic $\varphi:[a,b]\to\po$ are in $A$, then $\varphi(t)\in A$ for all $t$.
Similarly, if $v_0\in\po$, then $B=\{v\in\po: v\ge v_0\}$ is convex.
\end{prop}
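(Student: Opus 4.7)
The plan is to exploit the two defining features of the geodesic $\varphi:[a,b]\to\po$ connecting its endpoints $u$ and $v$. First, by the remark immediately following Definition 2.1, for every subgeodesic $\psi$ and every $x\in M$ the function $t\mapsto\psi(t)(x)$ is convex on $(a,b)$. Second, by the definition (2.1) itself, $\varphi$ is the supremum of all subgeodesics $\psi$ satisfying $\lim_a\psi\le u$ and $\lim_b\psi\le v$. I will use the first feature to treat the upper barrier $A$, and the second to treat the lower barrier $B$ by exhibiting $v_0$ itself as a competing subgeodesic.

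\textbf{Convexity of $A$.} Let $u,v\in A$ and let $\psi$ be any subgeodesic admissible in (2.1). Fix $x\in M$ and let $\ell_a(x)=\lim_{s\to a}\psi(s)(x)$, $\ell_b(x)=\lim_{s\to b}\psi(s)(x)$; these exist by convexity in $t$, and the hypothesis gives $\ell_a(x)\le u(x)\le v_0(x)$ and $\ell_b(x)\le v(x)\le v_0(x)$ (in particular both limits are finite from above). Convexity of $t\mapsto\psi(t)(x)$, extended by $\ell_a,\ell_b$ at the endpoints, yields
\[
\psi(t)(x)\le\frac{b-t}{b-a}\,\ell_a(x)+\frac{t-a}{b-a}\,\ell_b(x)\le v_0(x),
\]
since any convex combination of numbers not exceeding $v_0(x)$ is itself not exceeding $v_0(x)$. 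Taking the supremum over the admissible $\psi$ in (2.1) gives $\varphi(t)(x)\le v_0(x)$ for every $x\in M$, so $\varphi(t)\in A$ for every $t$.

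\textbf{Convexity of $B$.} Now $v_0\in\po$, and let $u,v\in B$. Consider the constant path $\psi(t)\equiv v_0$. Its lift $\Phi(s,x)=v_0(x)$ to $S_{ab}\times M$ is $\pi^*\omega$-plurisubharmonic, since locally writing $\omega=i\ddb w$ one has $w\circ\pi+\Phi=(w+v_0)\circ\pi$, a pullback of a plurisubharmonic function by the holomorphic projection $\pi$. Thus $\psi$ is a subgeodesic, and its endpoint limits equal $v_0$, which by hypothesis satisfy $v_0\le u$ and $v_0\le v$. Therefore $\psi$ competes in the supremum (2.1) for the geodesic from $u$ to $v$, whence $\varphi(t)\ge\psi(t)=v_0$ for all $t\in[a,b]$, i.e.\ $\varphi(t)\in B$.

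I do not foresee a genuine obstacle: both claims fall out of the definitions once the right objects are identified. The only points to keep in mind are the correct direction of the convexity inequality (convex functions lie below their chords, which is precisely the direction suited to an upper barrier), and the observation that an arbitrary $\omega$-plurisubharmonic function automatically defines a valid constant subgeodesic, which is what does all the work for the lower barrier.
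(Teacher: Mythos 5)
Your proof is correct and follows essentially the same route as the paper: part $B$ is word-for-word the paper's argument (the constant path $v_0$ is a competing subgeodesic), and part $A$ rests on the same $t$-convexity of subgeodesics, the only cosmetic difference being that you bound each competitor $\psi$ in the supremum (2.1) and then take the sup, whereas the paper applies the convexity directly to $\varphi$ itself, which is a subgeodesic.
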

\begin{proof} Suppose a geodesic, or even a subgeodesic $\varphi:[a,b]\to\po$ has endpoints in $A$.
Then the associated function $\Phi:\bar S_{ab}\times M\to [-\infty,\infty)$ as in (2.2) is $\pi^*\omega$-plurisubharmonic 
on $S_{ab}\times M$. Since it is translation invariant along the imaginary direction in
$S_{ab}$, for each $x\in M$ the function $\varphi(\cdot)(x)$ is convex, or identically $-\infty$ on $(a,b)$. Hence
$\varphi(t)(x)\le v_0(x)$ for $t=a,b$ implies the same for all $t\in[a,b]$.

Next suppose our geodesic $\varphi$ has endpoints in $B$. Now $\psi(t)=v_0$ for $t\in[a,b]$ is
a subgeodesic and
$\psi(t)\le\varphi(t)$ for $t=a,b$. Hence Definition 2.1 gives $\psi(t)\le\varphi(t)$ for all $t$, i.e., $\varphi(t)\in B$.
\end{proof}
\begin{prop} 
Riemannian distance between $u,v\in\cH$ is $\le\max_M|u-v|$.
\end{prop}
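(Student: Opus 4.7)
The plan is to exhibit an explicit path from $u$ to $v$ in $\cH$ whose Mabuchi length is at most $\max_M|u-v|$; the Riemannian distance, defined as the infimum of lengths of smooth paths, will then satisfy the same bound.

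The natural candidate is the straight line $\varphi:[0,1]\to C^\infty(M)$, $\varphi(t)=(1-t)u+tv$. The first step is to verify that $\varphi(t)\in\cH$ for every $t\in[0,1]$. This is immediate from linearity of $i\ddb$: one has
\[
\omega_{\varphi(t)}=\omega+i\ddb\varphi(t)=(1-t)\omega_u+t\omega_v,
\]
which is a convex combination of two positive $(1,1)$-forms, hence positive. So $\varphi$ is a smooth path in $\cH$ whose velocity is the constant tangent vector $\dot\varphi(t)=v-u\in C^\infty(M)$.

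Next I would compute the length. By definition of $g_M$,
\[
L(\varphi)=\int_0^1\sqrt{g_M(\dot\varphi(t),\dot\varphi(t))}\,dt=\int_0^1\left(\dfrac1\vo\int_M(v-u)^2\omega_{\varphi(t)}^n\right)^{1/2}\!dt.
\]
Since $\omega_{\varphi(t)}$ is cohomologous to $\omega$, we have $\int_M\omega_{\varphi(t)}^n=\vo$ for every $t$. Bounding $(v-u)^2$ pointwise by $(\max_M|u-v|)^2$ therefore yields
\[
\dfrac1\vo\int_M(v-u)^2\omega_{\varphi(t)}^n\le(\max_M|u-v|)^2,
\]
so $L(\varphi)\le\max_M|u-v|$. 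Since the Riemannian distance is the infimum of lengths over all smooth paths in $\cH$ joining $u$ and $v$, the claim follows.

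There is essentially no obstacle here: the only point that needs care is confirming that $\cH$ is convex in the affine sense so that the straight line is an admissible competitor, and that $\omega_{\varphi(t)}^n$ integrates to $\vo$ independently of $t$. Both are immediate, the latter from Stokes' theorem applied to $\omega_{\varphi(t)}^n-\omega^n$, which is exact.
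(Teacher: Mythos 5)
Your proof is correct and is exactly the paper's argument: the paper's one-line proof also takes the affine path $u+t(v-u)$ and bounds its Mabuchi length by $\max_M|u-v|$. You have simply filled in the routine details (convexity of $\cH$ and $\int_M\omega_{\varphi(t)}^n=\vo$) that the paper leaves implicit.
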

\begin{proof} Indeed, the length of the path $\phi(t)=u+t(v-u)$, $0\le t\le 1$, connecting $u,v$ is
$\le\max_M|u-v|$.
\end{proof}
Mabuchi further introduced a closed smooth 1-form, say  $\alpha$, on $\cH$,
\[
\alpha(\xi)=\dfrac1\vo\int_M\xi\,\omega_u^n,\qquad \xi\in T_u\cH\approx C^\infty(M).
\]
$\cH\subset C^\infty(M)$ being convex, $\alpha$ must be exact. Monge--Amp\`ere energy
$E:\cH\to\bR$ is defined as a primitive of $\alpha$,
\begin{equation} 
dE=\alpha,\qquad E(0)=0.
\end{equation} 
The zero level set $\cK=E^{-1}(0)$ is totally geodesic, and $u\mapsto \omega_u$ is a bijection between
$\cK$ and the set of K\"ahler forms $d$--cohomologous to $\omega$. 
\begin{prop} 
The map
\[
\cH\ni u\mapsto \big(u-E(u), E(u)\big)\in\cK\times\bR
\]
is a Riemannian isometry if $\bR$ is endowed with the Euclidean metric (see \cite[top of p. 233]{M87}). In
particular, $E$ is linear along geodesics in $\cH$.
\end{prop}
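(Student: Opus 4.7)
The plan is to verify that $\Psi(u)=(u-E(u),E(u))$ is a diffeomorphism $\cH\to\cK\times\bR$ whose differential is an isometry at each point, and then read off the linearity statement from the standard description of geodesics in a Riemannian product.

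First I would check that $\Psi$ lands in $\cK\times\bR$. The key identity is $E(u+c)=E(u)+c$ for any real constant $c$: this follows from $dE=\alpha$ and $\alpha_u(1)=\frac1\vo\int_M\omega_u^n=1$, integrating along the path $t\mapsto u+tc$. Hence $E(u-E(u))=0$, so $u-E(u)\in\cK$. The inverse is patently $(w,c)\mapsto w+c$, which stays in $\cH$ because $\omega_{w+c}=\omega_w>0$, and the identities $\Psi(w+c)=(w,c)$ and $\Psi^{-1}(\Psi(u))=u$ are immediate. Both maps are clearly smooth in the Fr\'echet sense.

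Next I would compute the differential. For $\xi\in T_u\cH\cong C^\infty(M)$,
\[
d\Psi_u(\xi)=\bigl(\xi-\alpha_u(\xi),\,\alpha_u(\xi)\bigr)=\bigl(\xi-c,\,c\bigr),\qquad c:=\frac1\vo\int_M\xi\,\omega_u^n.
\]
The crucial observation is that $\omega_{u-E(u)}=\omega+i\ddb(u-E(u))=\omega_u$, because $E(u)$ is a constant; therefore the Mabuchi inner product on $T_{u-E(u)}\cK$ is computed against $\omega_u^n$. The induced norm squared of $d\Psi_u(\xi)$ in $\cK\times\bR$ is
\[
\frac1\vo\int_M(\xi-c)^2\,\omega_u^n+c^2=\frac1\vo\int_M\xi^2\,\omega_u^n-2c^2+c^2+c^2=g_M(\xi,\xi),
\]
where we used $\frac1\vo\int_M\xi\,\omega_u^n=c$. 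Thus $d\Psi_u$ preserves the metric, so $\Psi$ is a Riemannian isometry.

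For the final sentence, I would invoke the standard fact that in a Riemannian product $A\times B$ geodesics are exactly pairs $(\gamma_A(t),\gamma_B(t))$ where each component is a geodesic in its factor; in particular, when $B=\bR$ with the Euclidean metric, the $\bR$-component of any geodesic is affine in the parameter. Since $\Psi$ is an isometry, every geodesic $\varphi(t)$ in $\cH$ has $E(\varphi(t))$ equal to the $\bR$-component of $\Psi(\varphi(t))$, hence affine in $t$. There is no real obstacle here; the only mild point requiring care is that we are working with Fr\'echet manifolds, but both the inverse map and the computation of $d\Psi_u$ are algebraic, so the verification goes through unchanged.
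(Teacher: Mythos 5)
Your proof is correct. The paper offers no argument for this proposition beyond a citation to Mabuchi, so your direct verification simply fills in what is outsourced to \cite[p. 233]{M87}, and it does so along the standard lines: the identity $E(u+c)=E(u)+c$, the explicit inverse $(w,c)\mapsto w+c$, and the pointwise computation showing $d\Psi_u$ preserves the metric (using $\omega_{u-E(u)}=\omega_u$). One small point worth making explicit: since $\cK=E^{-1}(0)$, the tangent space $T_w\cK$ is $\ker dE_w=\{\eta:\int_M\eta\,\omega_w^n=0\}$, and your first component $\xi-c$ does land there because $\frac1\vo\int_M(\xi-c)\,\omega_u^n=0$; this is implicit in your computation but should be stated so that the product metric you write down is actually the metric of $\cK\times\bR$. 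The deduction of linearity of $E$ from the product structure is fine for the smooth Mabuchi geodesics that the paper uses, which is the intended scope.
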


The only geodesics needed in the paper are of the following type. Let $\cL$ stand for Lie derivative, and
let $J:TM\to TM$ be the almost complex structure tensor of $M$.
\begin{prop} 
Suppose a real vector field $Y$ on $M$ vanishes somewhere and generates a one parameter group 
$g_t$ of biholomorphisms. If
$\cL_{JY}\omega=0$, $h\in C^\infty(M)$ satisfies $i\ddb h=\cL_{Y}\omega$ and $\int_M h\,\omega^n=0$,
then with any $c\in\bR$
\begin{equation} 
\varphi(t)=\int_0^tg_\tau^*h\, d\tau\qquad\text{and}\qquad \psi(t)=\varphi(t)+ct
\end{equation}
define geodesics $\varphi:\bR\to\cK$, $\psi:\bR\to\cH$.
\end{prop}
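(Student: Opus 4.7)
The plan has four ingredients. First, I would verify $\omega_{\varphi(t)}=g_t^*\omega$: differentiating under the integral and using that $g_\tau$ is biholomorphic, that $i\ddb h=\cL_Y\omega$, and that $g_\tau$ is the flow of $Y$,
\[
i\ddb\varphi(t)=\int_0^t g_\tau^*(i\ddb h)\,d\tau=\int_0^t g_\tau^*(\cL_Y\omega)\,d\tau=\int_0^t\frac{d}{d\tau}(g_\tau^*\omega)\,d\tau=g_t^*\omega-\omega,
\]
so $\varphi(t)\in\cH$. The normalization $\int_M h\,\omega^n=0$ then yields $\varphi(t)\in\cK$: indeed $\frac{d}{dt}E(\varphi(t))=\vo^{-1}\int_M g_t^*h\cdot g_t^*\omega^n=\vo^{-1}\int_M h\,\omega^n=0$, hence $E(\varphi(t))=E(0)=0$.

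Next, reduce the geodesic condition for $\varphi$ to a pointwise identity on $M$. Expanding the matrix of $\pi^*\omega+i\ddb\Phi$ in coordinates $(s,z^1,\ldots,z^n)$ and Schur-complementing with respect to the $z$-block (which is the positive matrix of $\omega_{\varphi(t)}=g_t^*\omega$), the Monge--Amp\`ere equation (2.3) for a smooth $\varphi:[a,b]\to\cH$ is equivalent to $\ddot\varphi=|\partial\dot\varphi|^2_{\omega_{\varphi(t)}}$ (here $|\partial u|^2_\omega=g^{\bar k j}u_j u_{\bar k}$). Since $\dot\varphi(t)=g_t^*h$ and $\omega_{\varphi(t)}=g_t^*\omega$, naturality of the K\"ahler gradient-norm under biholomorphisms gives $|\partial\dot\varphi(t)|^2_{\omega_{\varphi(t)}}(x)=|\partial h|^2_\omega(g_t(x))$, while $\ddot\varphi(t,x)=(Yh)(g_t(x))$. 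Hence the geodesic equation reduces to the pointwise identity $Yh=|\partial h|^2_\omega$ on $M$.

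The main step is proving this identity---the only place where the full strength of the hypotheses is used. Since $Y$ generates biholomorphisms, so does $JY$; and $JY$ vanishes wherever $Y$ does. Combined with $\cL_{JY}\omega=0$, this makes $JY$ a Killing vector field on $(M,\omega)$. By a classical result of Kobayashi, a Killing field on a compact K\"ahler manifold that vanishes somewhere is Hamiltonian: $\iota_{JY}\omega=-d\psi$ for some $\psi\in C^\infty(M)$. The K\"ahler identity $\omega(JA,JB)=\omega(A,B)$ yields $\iota_Y\omega=-J^*d\psi=d^c\psi$, so $\cL_Y\omega=dd^c\psi=2i\ddb\psi$; comparing with $i\ddb h=\cL_Y\omega$ on the compact connected $M$, $h=2\psi+\text{const}$. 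Rewriting $-d\psi=\iota_{JY}\omega$ via $\omega(JY,V)=-g(Y,V)$ gives $d\psi=g(Y,\cdot)$, so $Y=\nabla^g\psi$. Thus $Yh=2Y\psi=2g(Y,Y)=2|Y|^2$; and since $|du|^2_g=2|\partial u|^2_\omega$, one has $|\partial h|^2_\omega=4|\partial\psi|^2_\omega=2|\nabla^g\psi|^2=2|Y|^2$. Both sides equal $2|Y|^2$, proving the identity.

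Finally, for $\psi(t)=\varphi(t)+ct$: by Proposition 2.4 the map $u\mapsto(u-E(u),E(u))$ is a Riemannian isometry $\cH\to\cK\times\bR$ with Euclidean metric on the $\bR$-factor, and $\psi(t)$ corresponds to $(\varphi(t),ct)$ (since $E(\varphi(t))=0$ and $E$ is linear in an added constant). As the latter is a geodesic in the Riemannian product $\cK\times\bR$, $\psi:\bR\to\cH$ is a geodesic.
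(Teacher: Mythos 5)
Your argument is correct, and it diverges from the paper's proof at the key step. The paper handles the first and last ingredients exactly as you do: it computes $i\ddb\varphi(t)=g_t^*\omega-\omega$ and $dE(\dot\varphi(t))=\vo^{-1}\int_M h\,\omega^n=0$ to place $\varphi$ in $\cK$, and it deduces that $\psi$ is a geodesic from the isometric splitting $\cH\approx\cK\times\bR$ of Proposition 2.4. But for the central claim that $\varphi$ is a geodesic, the paper simply cites Mabuchi's \cite[Theorem 3.5]{M87} (with a pointer to \cite[\S 6]{D99} and to its own Theorem 9.7 for a generalization), whereas you give a self-contained verification: Schur-complementing the $(n+1)\times(n+1)$ matrix of $\pi^*\omega+i\ddb\Phi$ reduces the Monge--Amp\`ere equation to $\ddot\varphi=|\partial\dot\varphi|^2_{\omega_{\varphi(t)}}$, and naturality under the biholomorphisms $g_t$ reduces that to the single pointwise identity $Yh=|\partial h|^2_\omega$, which you then prove by identifying $h=2\psi+\mathrm{const}$ with $Y=\nabla^g\psi$ (both sides equal $2|Y|^2_g$). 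I checked the sign and normalization conventions in that chain ($\iota_{JY}\omega=-d\psi$, $\iota_Y\omega=d^c\psi$, $dd^c=2i\ddb$, $|du|^2_g=2|\partial u|^2_\omega$) and they are consistent, so the identity is right. Your route buys independence from Mabuchi's theorem at the cost of a coordinate computation; it is closest in spirit to the paper's later Theorem 9.7, which also verifies the degenerate Monge--Amp\`ere equation directly (there by exhibiting $\partial_s-Z_M'$ in the kernel of the form). Two points you lean on implicitly and should acknowledge: the equivalence between smooth solutions of (2.3) and geodesics in the sense of Definition 2.1 (the paper's Bedford--Kalka/maximum-principle discussion around (2.2)--(2.3)), and the Lichnerowicz--Kobayashi statement that a Killing field vanishing somewhere on a compact K\"ahler manifold is Hamiltonian (the paper's Proposition 2.6 and Corollary 2.7), which requires knowing that such a field is real holomorphic --- automatic here since $Y$, hence $JY$, generates biholomorphisms.
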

\begin{proof}
First observe that
\begin{equation} 
i\ddb\psi(t)=i\ddb\varphi(t)=\int_0^t g_\tau^*i\ddb h\,d\tau=\int_0^tg_\tau^*\cL_{Y}\omega\,d\tau=
g_t^*\omega-\omega.
\end{equation}
The differential of Monge--Amp\`ere energy evaluated on the velocity vector $\dot\varphi(t)$ is
\[
dE\big(\dot\varphi(t)\big)=\dfrac1\vo\int_M g_t^*h\,\omega_{\varphi(t)}^n=\dfrac1\vo\int_Mg_t^*(h\omega^n)=
\dfrac1\vo\int_Mh\,\omega^n=0.
\]
Hence $E\big(\varphi(t)\big)=E\big(\varphi(0)\big)=0$ for all $t$, and $\varphi(t)\in\cK$. Mabuchi 
and Donaldson already 
studied paths in $\cK$ induced by vector fields on $M$ that generate one parameter subgroups of
biholomorphisms. \cite[Theorem 3.5]{M87} implies that $\varphi$ is a geodesic (for a different 
perspective, see \cite[\S 6]{D99}, and for a generalization, Theorem 9.7 here). Therefore $\psi$ is also
geodesic by Proposition 2.4.
\end{proof}

{\bf The moment map.} Now let $(M,\omega)$ be a connected compact symplectic manifold of dimension
$2n$. Contraction with tangent vectors 
\begin{equation} 
TM\ni\theta\mapsto-\iota(\theta)\omega\in T^*M,\quad\text{resp.}\quad 
\bC TM\ni\theta\mapsto-\iota(\theta)\omega\in\bC T^*M,
\end{equation}
induces isomorphisms between the tangent 
and cotangent bundles of $M$ on the one hand, and their complexifications on the other. The 
symplectic gradient of a smooth $h:M\to\bC$ is the complex vector field $\sg h=\text{sgrad}_\omega \,h$ 
that corresponds to
$dh$ under (2.7). Vector fields that arise in this way are called Hamiltonian. If $h$ is real valued,
$\sg h$ is a real vector field whose flow preserves $\omega$.

Consider a smooth left action of a Lie group $G$ on $M$
\begin{equation} 
G\times M\ni(g,x)\mapsto gx\in M.
\end{equation}
The differential of the above map at $\{e\}\times M$ induces a homomorphism from $\fg$, the Lie
algebra of $G$, to Vect$(M)$,
the Lie algebra  of smooth real vector fields on $M$. Denote the vector field that corresponds to
$X\in\fg$ by $X_M$. If $G$ acts by symplectomorphisms, the flow of these vector fields preserves
$\omega$, $0=\cL_{X_M}\omega=d\iota(X_M)\omega$. Assuming that $\iota(X_M)\omega$ is not
only closed but exact, we can write $-\iota(X_M)\omega=dh_X$, or $X_M=\sg h_X$ with some
$h_X\in C^\infty(M)$, that is unique under the condition $\int_Mh_X\,\omega^n=0$. If such $h_X$
exists for all $X\in\fg$, the moment map $\mu:M\to\fg^*$ is defined by
\[
\langle\mu(x), X\rangle=h_X(x),\qquad x\in M, \quad X\in\fg;
\]
it is equivariant in the sense that $\mu\circ g=\text{Ad}^*_{g^{-1}}\,\mu$, see e.g. \cite[pp. 186--187]{GS84'}.

Assume now that $(M,\omega)$ is even K\"ahler, and the action of $G$ preserves the complex 
structure in addition to
$\omega$. Then in fact each $g\in G$ acts by a holomorphic isometry, and there is a good criterion,
due to Lichnerowicz and Kobayashi, for the existence of the moment map. Denote by
$J$ the complex structure tensor of $M$.  Any complex vector field $Y$
on $M$ can be decomposed $Y=Y'+Y''$ into $(1,0)$ and $(0,1)$ vector fields,
\begin{equation} 
Y'=\frac12(Y-iJY),\quad Y''=\frac12(Y+iJY);\quad\text{if } Y\text{ is real, } Y=2\re Y'=2\re Y''.
\end{equation}
That the vector fields $X_M$ generate one parameter groups of holomorphic isometries means that
$X_M$ is a Killing vector field, and $X_M'$ is a holomorphic section of the $(1,0)$ tangent bundle
$T^{10}M\to M$ \cite[p. 77]{K72}.
\begin{prop} 
A holomorphic $(1,0)$ vector field $Z$ on $M$ can be written $Z=(\sg h)'$ with some smooth $h:M\to\bC$
if and only if $\sigma(Z)=0$ for all holomorphic $(1,0)$-forms $\sigma$ on $M$. $\re Z$ is Killing if and
only if $h$ can be chosen real; then $\re Z=\sg h/2$.
\end{prop}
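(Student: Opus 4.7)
The starting point is a type decomposition. Since $\omega$ is of type $(1,1)$, contracting it with a $(1,0)$ vector yields a $(0,1)$ form, so splitting $-\iota(\sg h)\omega=dh=\partial h+\bar\partial h$ by type gives $-\iota((\sg h)')\omega=\bar\partial h$. Hence $Z=(\sg h)'$ is equivalent to $\alpha:=-\iota(Z)\omega=\bar\partial h$ for some smooth $h\colon M\to\bC$. A short local check in holomorphic coordinates---using that $Z$ is holomorphic and that the K\"ahler condition gives $\partial_{\bar k}g_{i\bar j}=\partial_{\bar j}g_{i\bar k}$---shows that $\alpha$ is automatically $\bar\partial$-closed.

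The core of the argument is then Hodge theory on the compact K\"ahler manifold $M$: the $\bar\partial$-closed form $\alpha$ is $\bar\partial$-exact iff it is $L^2$-orthogonal to every harmonic $(0,1)$ form, and on a compact K\"ahler manifold the harmonic $(0,1)$ forms are precisely the complex conjugates $\bar\sigma$ of holomorphic $(1,0)$-forms $\sigma$. Using the K\"ahler formula expressing $*\sigma$ as a multiple of $\sigma\wedge\omega^{n-1}$ (equivalently, using Serre duality paired with hard Lefschetz), this orthogonality rewrites as
\[
\int_M\alpha\wedge\sigma\wedge\omega^{n-1}=0\qquad\text{for every holomorphic }(1,0)\text{-form }\sigma.
\]
To identify this integral, I would apply $\iota(Z)$ to the identically-zero top-degree form $\sigma\wedge\omega^n$ and expand by the Leibniz rule; this yields the pointwise identity $n\,\alpha\wedge\sigma\wedge\omega^{n-1}=\sigma(Z)\,\omega^n$. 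Because $\sigma(Z)$ is a holomorphic function on a compact connected manifold, it is constant, so the integral above vanishes iff $\sigma(Z)\equiv0$. The one nontrivial ingredient is the Hodge/Serre step that identifies $\bar\partial$-exactness with the integral pairing condition---this is where the main work is concentrated; everything else is type bookkeeping plus the Leibniz rule.

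For the Killing statement, assume $Z=(\sg h)'$. Since $Z$ is holomorphic $\re Z$ already preserves $J$, so ``$\re Z$ Killing'' reduces to $\cL_{2\re Z}\omega=0$. A direct computation yields
\[
\cL_{2\re Z}\omega=-d(\alpha+\bar\alpha)=-d(\bar\partial h+\partial\bar h)=-2i\,\ddb\,\im h,
\]
which vanishes iff $\im h$ is pluriharmonic, i.e.\ (on compact K\"ahler) constant. Hence $\re Z$ is Killing iff $h$ can be made real by subtracting $i\,\im h$; in that case $\sg h$ is a real vector field and $2\re Z=Z+\bar Z=(\sg h)'+(\sg h)''=\sg h$, giving $\re Z=\sg h/2$.
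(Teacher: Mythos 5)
Your argument is correct in all its steps: the reduction of $Z=(\sg h)'$ to $\bar\partial$-exactness of $\alpha=-\iota(Z)\omega$, the local verification that $\alpha$ is $\bar\partial$-closed via the K\"ahler symmetry of the metric, the Hodge-theoretic identification of the obstruction with the pairing $\int_M\alpha\wedge\sigma\wedge\omega^{n-1}$, the contraction identity turning that into $\sigma(Z)$, and the $\partial\bar\partial(\im h)$ computation for the Killing statement. The paper itself gives no proof here — it simply cites Lichnerowicz and Kobayashi — and what you have written is precisely the standard argument contained in those references, so there is nothing to fault and no genuinely different route to compare against.
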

\begin{proof} 
See \cite{L67}, \cite[pp. 94--95]{K72}. In Kobayashi's notation the $(0,1)$-form $\zeta$ 
corresponding to the vector field $Z$ is an imaginary constant times $\iota(Z)\omega$.
\end{proof}

Since $\sigma(Z)$ above is a holomorphic function on $M$, hence constant, it follows that $\sigma(Z)=0$
whenever $Z$ (or $\re\,Z$) vanishes somewhere.  Conversely, if $\re Z=\sg h$ with a real $h$, then
$Z=0$ where $h$ is maximal. (A fuller result of this sort is \cite[Theorem 1]{LS94} by LeBrun and Simanca.) Hence:
\begin{cor} 
Suppose a connected Lie group $G$ acts by holomorphic isometries on a connected compact K\"ahler
manifold $M$. The action admits a moment map if and only if the vector fields $X_M$ vanish somewhere,
for all $X\in\fg$ or,
equivalently, for a family of $X\in\fg$ that spans $\fg$.
\end{cor}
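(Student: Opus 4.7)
The corollary is essentially a direct application of Proposition 2.7 together with the two remarks recorded in the paragraph immediately preceding the statement. The plan is to translate existence of a moment map into the condition that each $X_M$ can be written as the symplectic gradient of a real smooth function, then use Proposition 2.7 to link this to the vanishing of $\sigma(X_M')$ for every holomorphic $(1,0)$-form $\sigma$, and finally to the vanishing of $X_M$ somewhere on $M$.

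For the equivalence of moment map existence with ``$X_M$ vanishes somewhere for every $X\in\fg$'', I would argue as follows. Since $G$ acts by holomorphic isometries, each $X_M$ is Killing, so $X_M'$ is a holomorphic section of $T^{10}M$ by the discussion preceding Proposition 2.7. By that proposition, $X_M=\sg h_X$ for some real $h_X\in C^\infty(M)$ if and only if $\sigma(X_M')=0$ for every global holomorphic $(1,0)$-form $\sigma$ on $M$. The forward direction is then immediate from the second remark following Proposition 2.7: if $X_M=\sg h_X$ with $h_X$ real, compactness of $M$ forces $h_X$ to attain a maximum, at which $X_M$ vanishes. For the reverse direction, suppose $X_M(x_0)=0$. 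Then each $\sigma(X_M')$ is a holomorphic function on the compact $M$, hence constant, and vanishes at $x_0$, so it vanishes identically. Proposition 2.7 then produces a real $h_X$ with $X_M=\sg h_X$. Choosing the unique such $h_X$ with $\int_M h_X\,\omega^n=0$ defines $\langle\mu,X\rangle=h_X$.

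For the ``spanning family'' version only the nontrivial direction needs argument: if $X_M$ vanishes somewhere for every $X$ in a family $\{X_\alpha\}$ spanning $\fg$, the previous paragraph supplies real Hamiltonians $h_{X_\alpha}$ with $X_{\alpha,M}=\sg h_{X_\alpha}$. Linearity of $X\mapsto X_M$ and of $h\mapsto\sg h$ then gives, for any $X=\sum c_\alpha X_\alpha$, that $X_M=\sg\bigl(\sum c_\alpha h_{X_\alpha}\bigr)$, a real Hamiltonian vector field that vanishes at any maximum of its Hamiltonian.

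I do not anticipate a serious obstacle; everything reduces to Proposition 2.7 plus the maximum-principle remarks already stated in the text. The only mild piece of bookkeeping is to verify that the $h_X$ chosen this way assemble into a linear, equivariant moment map. Linearity in $X$ follows because the normalization $\int_M h_X\,\omega^n=0$ pins down $h_X$ uniquely and $X\mapsto X_M$ is linear; equivariance $\mu\circ g=\text{Ad}^*_{g^{-1}}\mu$ is then automatic from the $G$-invariance of $\omega^n$, which preserves the normalization.
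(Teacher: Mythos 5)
Your proposal is correct and follows essentially the same route as the paper: the corollary is deduced from Proposition 2.7 together with the two observations in the preceding paragraph (that $\sigma(Z)$, being a holomorphic function on compact $M$, is constant and hence vanishes identically once $Z$ vanishes somewhere; and that $X_M=\sg h_X$ with $h_X$ real forces a zero at a maximum of $h_X$), with the spanning-family case handled by linearity of $X\mapsto X_M$ and $h\mapsto\sg h$. Your extra bookkeeping on the normalization and equivariance is consistent with what the paper delegates to the definition of the moment map and the citation of Guillemin--Sternberg.
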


For example, $\SU(n+1)$  acts naturally by holomorphic isometries on $M=\bP_n$, endowed with the 
Fubini--Study metric. The vector fields $X_M$ are real parts of holomorphic sections of $T^{10}\bP_n$,
and have zeros. We conclude that the action has a moment map. By contrast, if $M$ is a torus with
translation invariant K\"ahler form, the only holomorphic section of $T^{10}M$ that has a zero is the
zero section. Indeed, it is easily seen directly that only trivial actions will have a moment map ($\equiv 0$).

Assume the $G$-action on the connected
compact K\"ahler manifold $(M,\omega)$, by holomorphic isometries, 
admits a moment map $\mu$.

\begin{prop} 
If $X\in\fg$ and $h=-2\mx$, then
\begin{equation} 
i\ddb h=\cL_{JX_M}\omega.
\end{equation}
\end{prop}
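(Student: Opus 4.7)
The plan is to reduce everything to Cartan's formula and a type decomposition. Since $\omega$ is closed, Cartan's formula gives $\cL_{JX_M}\omega = d\,\iota(JX_M)\omega$, so the task reduces to computing $\iota(JX_M)\omega$ and showing its exterior derivative equals $i\ddb h = -2i\pa\op\langle\mu,X\rangle$.

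I would set $f=\langle\mu,X\rangle$ and use the $(1,0)/(0,1)$ decomposition of $X_M$ as in (2.9): write $X_M = X_M' + X_M''$ with $X_M''=\overline{X_M'}$, so that
\[
JX_M = J(X_M'+X_M'') = iX_M' - iX_M''.
\]
Since $\omega$ is of type $(1,1)$, contraction with a $(1,0)$ vector field produces a $(0,1)$-form and vice versa. Thus the defining property $-\iota(X_M)\omega = df = \pa f + \op f$ of the moment map splits by bidegree into
\[
-\iota(X_M')\omega = \op f, \qquad -\iota(X_M'')\omega = \pa f.
\]
Combining,
\[
\iota(JX_M)\omega = i\iota(X_M')\omega - i\iota(X_M'')\omega = -i\op f + i\pa f = i(\pa-\op)f.
\]

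Applying $d=\pa+\op$ and using $\pa^2 = \op^2 = 0$ together with $\pa\op = -\op\pa$, one obtains
\[
d\,\iota(JX_M)\omega = i(\pa+\op)(\pa-\op)f = i(-\pa\op+\op\pa)f = -2i\pa\op f = i\ddb(-2f) = i\ddb h,
\]
which is (2.10). The only subtlety is the bookkeeping with signs and bidegrees in the type decomposition; once these are pinned down, the identity is essentially automatic. No integrability of $J$ is needed beyond what is implicit in the decomposition $d=\pa+\op$ on a complex manifold, so the argument is self-contained and genuinely local.
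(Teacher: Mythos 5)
Your proof is correct and follows essentially the same route as the paper: both arguments split the Hamiltonian equation $-\iota(X_M)\omega=d\langle\mu,X\rangle$ by bidegree using $X_M=X_M'+X_M''$, identify $\iota(JX_M)\omega$ with $i(\pa-\op)\langle\mu,X\rangle$ (the paper phrases this as $\op h=2\iota(X_M')\omega$), and then apply $d$ together with Cartan's formula and $d\omega=0$. The sign and bidegree bookkeeping all checks out against the paper's normalization of the moment map.
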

\begin{proof}
By definition, $X_M=-\sg h/2$, or $dh/2=\iota(X_M)\omega=\iota(X'_M)\omega+\iota(X_M'')\omega$. 
Separating $(1,0)$ and $(0,1)$ parts, $\bar\partial h=2\iota(X_M')\omega$ follows. Hence
\begin{align*}
0=d\big(2\iota(X_M')\omega-\bar\partial h\big)&= d\,\iota(X_M)\omega-\sqrt{-1}d\,\iota(JX_M)\omega-\ddb h\\
&=\cL_{X_M}\omega-\sqrt{-1}\cL_{JX_M}\omega-\ddb h.
\end{align*}
Since $\cL_{X_M}\omega=0$, (2.10) follows.
\end{proof}  
{\bf Complexifications.}
Assume $G$ is compact and connected. Then the action (2.8) extends to a holomorphic action 
\[
G^\bC\times M\ni(g,x)\mapsto gx\in M
\]
of the complexification of $G$, see e.g. \cite[III.8]{BD85} for the notion of complexification and \cite[III., Theorem 1.1]{K72},
that explains why $G^\bC$ acts. The infinitesimal generators of this action, 
real vector fields, are still 
denoted $X_M$, but this time for $X\in \fg^\bC=\fg\oplus i\fg$, the Lie algebra of $G^\bC$. Thus 
$(iX)_M=JX_M$. 
We will need the following structure theorem:
\begin{thm} 
$G^\bC$ is a Stein manifold and the map
\[
G\times\fg\ni(g,X)\mapsto g\exp iX\in G^\bC\]
is a diffeomorphism.
\end{thm}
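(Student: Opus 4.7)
I would reduce the theorem to the classical polar decomposition of $\GL(n,\bC)$ via a faithful unitary representation. By Peter--Weyl, $G$ admits a faithful finite-dimensional unitary representation $\rho:G\hookrightarrow U(n)$; by the universal property of complexification (cited in the text) this extends to a holomorphic homomorphism $\rho^\bC:G^\bC\to \GL(n,\bC)$, with differential $\rho_*:\fg\to\fu(n)$ injective. In particular $i\rho_*(\fg)$ lies in the space of Hermitian matrices.

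The base case $G=U(n)$, $G^\bC=\GL(n,\bC)$ is the classical polar decomposition: every $A\in \GL(n,\bC)$ factors uniquely as $A=u\exp(iX)$ with $u\in U(n)$ and $X\in\fu(n)$ (so $\exp(iX)$ is positive definite Hermitian), and $(u,X)\mapsto u\exp(iX)$ is a real-analytic diffeomorphism $U(n)\times\fu(n)\to \GL(n,\bC)$. Moreover $\GL(n,\bC)$ is Stein, being isomorphic to the affine variety $\{(A,t):t\det A=1\}\subset\bC^{n^2+1}$.

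For general $G$ the crux is to show that $\rho^\bC$ is a closed holomorphic embedding with image exactly $S:=\rho(G)\cdot\exp(i\rho_*\fg)$. Granting this, the factorization $\rho^\bC\circ\Phi=\Phi_{U(n)}\circ(\rho\times\rho_*)$, where $\Phi(g,X)=g\exp(iX)$ and $\Phi_{U(n)}$ is the polar diffeomorphism of the base case, identifies $\Phi$ as a diffeomorphism. Simultaneously $G^\bC$ becomes a closed complex submanifold of the Stein manifold $\GL(n,\bC)$, hence is itself Stein.

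\textbf{Main obstacle.} The identification $\rho^\bC(G^\bC)=S$. I would check: (a) $S$ is closed in $\GL(n,\bC)$, by properness of $\rho(G)\times\rho_*\fg\to \GL(n,\bC)$ in polar coordinates; (b) $S$ is closed under inversion, since $(g\exp(iX))^{-1}=g^{-1}\exp(-i\mathrm{Ad}(g)X)\in S$ by $\mathrm{Ad}(\rho(G))$-invariance of $\rho_*\fg$; (c) the tangent space to $S$ at the identity is $\rho_*\fg\oplus i\rho_*\fg=\rho_*(\fg^\bC)$, the Lie algebra of $\rho^\bC(G^\bC)$; and (d) $S\cdot S\subset S$. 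Step (d) is the delicate point, as $\exp(iX)\exp(iY)$ need not equal $\exp(iZ)$ in general. I would instead argue that the analytic subgroup $H$ of $\GL(n,\bC)$ with Lie algebra $\rho_*(\fg^\bC)$—which coincides with $\rho^\bC(G^\bC)$ by connectedness of $G^\bC$—is invariant under the Cartan involution $A\mapsto (A^*)^{-1}$, because this involution preserves $\rho_*\fg\oplus i\rho_*\fg$ as its $\pm 1$ eigenspace decomposition on the Lie algebra. Uniqueness of the polar decomposition of $\GL(n,\bC)$ then forces the polar factors of any $A\in H$ to lie in $\rho(G)\times\rho_*\fg$, giving $\rho^\bC(G^\bC)\subset S$; the reverse inclusion is trivial.
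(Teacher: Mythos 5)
Your overall strategy is the same as the paper's: embed $G$ in a unitary group by a faithful representation, realize $G^\bC$ inside $\GL(n,\bC)$ as a subgroup stable under conjugate transpose, and pull the statement back from the polar decomposition of $\GL(n,\bC)$. The paper does exactly this, except that it constructs $G^\bC$ directly as a $*$-invariant affine subvariety $V(I)\subset\GL(N,\bC)$ (following the Br\"ocker--tom Dieck construction of the complexification, which also yields Steinness at once) and then simply quotes Knapp's Proposition 7.14 for the global Cartan decomposition of such a group. Where you try to prove that decomposition yourself, there is a genuine gap.

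The gap is the final step. From $\Theta$-invariance of $H$ you do get $A^*A=\Theta(A)^{-1}A\in H$ for every $A\in H$, and uniqueness of the polar decomposition in $\GL(n,\bC)$ tells you that the factors $u$ and $\exp(iX)$ of $A=u\exp(iX)$ are uniquely determined by $A$ --- but it does not tell you that they lie in $\rho(G)$ and $\exp(i\rho_*\fg)$. What is actually needed is the lemma: if a closed (or algebraic) subgroup of $\GL(n,\bC)$ is stable under $A\mapsto A^*$ and contains a positive definite $P$, then it contains $P^t$ for all $t\in\bR$ and $\log P$ lies in the Hermitian part of its Lie algebra. That lemma is the entire content of the result the paper cites from Knapp, and it requires a separate argument (diagonalize $P$ and use that an exponential polynomial vanishing on the integers vanishes identically, in the algebraic case). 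Your appeal to uniqueness does not substitute for it; a priori the positive definite element $A^*A\in H$ need not be $\exp$ of anything in $i\rho_*\fg$. Moreover you work with the \emph{analytic} subgroup $H$ with Lie algebra $\rho_*(\fg^\bC)$, whose closedness in $\GL(n,\bC)$ you have not established (deducing it from $H=S$ would be circular), and you would still need $H\cap U(n)=\rho(G)$, which also does not follow from uniqueness of polar factors alone. All of these points are settled simultaneously by invoking the global Cartan decomposition for self-adjoint closed (or algebraic) linear groups, which is precisely how the paper proceeds; if you want a self-contained proof you must supply the positive-definite-powers lemma.
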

\begin{proof}
We can assume that $G$ is a subgroup of a unitary group ${\rm U}(N)$. In the construction of
$G^\bC$ in \cite{BD85} we can take $r$ the restriction of the standard representation ${\rm U}(N)\to\GL(N,\bC)$,
and the discussion after (8.2) there then identifies $G^\bC$ with an affine variety, the zero set 
$V(I)\subset\GL(N,\bC)\subset\bC^{N\times N}$ of
a certain ideal $I$ of holomorphic polynomials. In particular, $G^\bC$ is Stein. Since
$G$ is maximally real in $G^\bC$ and invariant under the operation of passing to the
adjoint of a matrix, $G^\bC=V(I)$ is also invariant.  
Hence the second statement of the theorem is a special case of \cite[Proposition 7.14]{K02}.
\end{proof}
\begin{lem} 
If the $G$-action on $(M,\omega)$ admits a moment map $\mu$, then $X_M$ has a zero for all $X\in\fg^\bC$.
\end{lem}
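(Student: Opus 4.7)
Write $X = Y + iZ$ with $Y, Z \in \fg$. Since the map $\fg^\bC \to \text{Vect}(M)$, $X \mapsto X_M$, is real-linear and intertwines multiplication by $i$ with the almost complex structure $J$, we have $X_M = Y_M + JZ_M$. A sufficient condition for $X_M$ to vanish at some $p$ is that $Y_M(p) = 0$ and $Z_M(p) = 0$ simultaneously. The plan is to produce such a common zero first under the assumption $[Y,Z] = 0$, and then to reduce the general case to this one by a conjugation-and-density argument inside $\fg^\bC$.

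Assume $[Y,Z] = 0$. Moment-map equivariance gives $\{h_Y, h_Z\} = h_{[Y,Z]}$, which our normalization forces to vanish identically, so $[Y_M, Z_M] = [\sg h_Y, \sg h_Z] = \sg\{h_Y, h_Z\} = 0$, and the flows of $Y_M$ and $Z_M$ commute. The zero set $C = \{Z_M = 0\}$ coincides with the fixed-point set of the compact torus $T' = \overline{\exp(\bR Z)} \subset G$ acting by holomorphic isometries, hence is a disjoint union of compact connected K\"ahler submanifolds $C_\alpha$ of $M$. Because the flow of $Y_M$ commutes with that of $Z_M$ it preserves $C$, and by continuity in $t$ it preserves each component $C_\alpha$. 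Restricted to $C_\alpha$, the vector field $Y_M$ is Killing for $\omega|_{C_\alpha}$ and Hamiltonian with Hamiltonian function $h_Y|_{C_\alpha}$; on compact $C_\alpha$ this function attains an extremum at some $p$, where $Y_M(p) = 0$, while $Z_M(p) = 0$ follows from $p \in C$. Hence $X_M(p) = 0$.

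For general $X \in \fg^\bC$, the set $\cZ = \{X \in \fg^\bC : X_M \text{ vanishes somewhere on } M\}$ is closed in $\fg^\bC$ by compactness of $M$ and continuity of evaluation. The complex Lie algebra $\fg^\bC$ is reductive, so by standard structure theory the regular semisimple elements form an open dense subset, and each such element is $\text{Ad}(G^\bC)$-conjugate to an element of $\mathfrak{t}^\bC = \mathfrak{t} \oplus i\mathfrak{t}$, where $\mathfrak{t}$ is the Lie algebra of a chosen maximal torus of $G$. For a regular semisimple $X$, writing $X = \text{Ad}_g(Y' + iZ')$ with $Y', Z' \in \mathfrak{t}$ automatically commuting, the previous paragraph provides a zero $p'$ of $(Y'+iZ')_M$, and then $gp'$ is a zero of $X_M$ since $(\text{Ad}_g X')_M(gp') = dg(X'_M(p'))$. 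Thus $\cZ$ contains a dense subset of $\fg^\bC$, and being closed it equals all of $\fg^\bC$.

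The principal obstacle is this last step, which leans on two standard but nontrivial facts about complex reductive Lie algebras---density of regular semisimple elements and their $G^\bC$-conjugacy to the Cartan $\mathfrak{t}^\bC$. A more self-contained alternative would be a Kempf--Ness style argument minimizing $|X_M|^2$ on $M$ or along a $G^\bC$-orbit closure and exploiting $[X_M, JX_M] = [X, iX]_M = 0$, but extracting a genuine zero of $X_M$ from such a minimization does not appear to be any easier than the density route.
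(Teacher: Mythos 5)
Your strategy is genuinely different from the paper's, and in substance it works, but there is one omission you must patch before it is complete. The paper's own proof is a two-line reduction: writing $X=Y+iZ$ with $Y,Z\in\fg$, the $(1,0)$-part satisfies $X_M'=\sg'\big(\langle\mu,Y\rangle+i\langle\mu,Z\rangle\big)$, so $X_M'$ is a holomorphic vector field admitting a (complex-valued) holomorphy potential, and Theorem 1 of LeBrun--Simanca \cite{LS94} immediately gives a zero of $X_M'$, hence of $X_M=2\,\re X_M'$. Your route instead splits into the commuting case (the zero set of $Z_M$ is the fixed-point set of a compact torus, a disjoint union of compact complex submanifolds; $Y_M$ is tangent to each component and Hamiltonian there, so it vanishes at an extremum of the restricted Hamiltonian) and a reduction of the general case to it via density of regular semisimple elements and conjugacy of Cartan subalgebras in the complex reductive algebra $\fg^\bC$, together with closedness of the set of $X$ whose field has a zero. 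What the paper's argument buys is brevity, at the price of citing a nontrivial analytic theorem; yours is self-contained on the K\"ahler side but imports standard Lie structure theory. (Incidentally, your Poisson-bracket step is unnecessary: $[Y,Z]=0$ gives $[Y_M,Z_M]=0$ directly, since $X\mapsto X_M$ is a Lie algebra (anti)homomorphism.)

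The gap: in the commuting case you never show that $C=\{Z_M=0\}$ is nonempty. This cannot be waved away, because a compact torus of holomorphic isometries of a compact K\"ahler manifold may act without fixed points (translations on a complex torus), and the lemma itself is false without the moment-map hypothesis; nonemptiness of $C$ is exactly where that hypothesis must enter your argument, yet as written it is used only for the bracket computation and for naming $h_Y$. The fix is one line: $Z\in\fg$ has the real Hamiltonian $h_Z=\langle\mu,Z\rangle$ with $Z_M=\sg h_Z$, so $Z_M$ vanishes wherever $h_Z$ attains its maximum on the compact manifold $M$ (this is the observation the paper makes just before Corollary 2.7). With that line inserted, each compact component $C_\alpha$ exists, $\omega|_{C_\alpha}$ is a K\"ahler form, and your extremum argument produces the desired common zero; the remaining steps (the equivariance formula $(\text{Ad}_gX)_M(gx)=dg\,X_M(x)$, closedness of your set $\cZ$, and the density/conjugacy facts) are correct as stated.
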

\begin{proof} If $X=Y+iZ$ with $Y,Z\in\fg$, then $X_M'=\sg'\big(\langle\mu, Y\rangle+i\langle\mu, Z\rangle\big)$.
By \cite[Theorem 1]{LS94}, this implies that $X'_M$ vanishes somewhere, and so does $X_M$.  
\end{proof}

\begin{prop} 
Assume that the $\fg$-action on $M$ is effective: $X\in\fg$ and $X_M=0$ imply $X=0$. Assume also that
the $G$-action on  $(M,\omega)$ has a moment map. If $\gamma\in G^\bC$ satisfies $\gamma^*\omega=\omega$, then
$\gamma\in G$.
\end{prop}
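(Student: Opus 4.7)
The plan is to invoke Theorem 2.11 to reduce to showing that $\exp(iX)^*\omega=\omega$ forces $X=0$, and then to build the Mabuchi geodesic of Proposition 2.6 and observe that it connects $0$ to itself, hence must vanish identically.

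Writing $\gamma=g\exp(iX)$ via Theorem 2.11 with $g\in G$, $X\in\fg$, and using $g^*\omega=\omega$, I reduce to the case $\exp(iX)^*\omega=\omega$ and need $X=0$. Apply Proposition 2.6 with $Y=JX_M$: this vector field vanishes somewhere by Lemma 2.13 applied to $iX\in\fg^\bC$, it generates the flow $\phi_t=\exp(itX)$ since $(iX)_M=JX_M$, and $\cL_{JY}\omega=\cL_{-X_M}\omega=0$ because $X_M$ is Killing; moreover Proposition 2.9 produces $h=-2\mx$ satisfying $i\ddb h=\cL_{Y}\omega$, with the normalization $\int_M h\,\omega^n=0$ built in. Proposition 2.6 then gives a smooth Mabuchi geodesic $\varphi(t)=\int_0^t\phi_\tau^*h\,d\tau$ in $\cK$, and (2.6) yields $\omega_{\varphi(t)}=\phi_t^*\omega$.

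The hypothesis $\exp(iX)^*\omega=\omega$ gives $\omega_{\varphi(1)}=\omega$, so $i\ddb\varphi(1)=0$ and $\varphi(1)$ is constant on $M$. Since $dE(\xi)=\vo^{-1}\int_M\xi\,\omega_u^n$ shows that along the path $s\mapsto sc$ the energy satisfies $\frac{d}{ds}E(sc)=c$, one has $E(c)=c$ for constant functions, so $\varphi(1)\in\cK=E^{-1}(0)$ forces $\varphi(1)=0$. Thus $\varphi\colon[0,1]\to\cH$ is a smooth Mabuchi geodesic from $0$ to $0$; by the discussion after Definition 2.1 (Semmes' identification), $\varphi$ coincides with the upper envelope (2.1) of subgeodesics $\psi$ with $\lim_0\psi\le 0$ and $\lim_1\psi\le 0$. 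For any such $\psi$ and each fixed $x\in M$, the function $t\mapsto\psi(t)(x)$ is convex on $(0,1)$ (its associated $\Psi$ is $\pi^*\omega$-plurisubharmonic, independent of $\im s$, while $\pi^*\omega$ restricts to $0$ on the slice $S_{01}\times\{x\}$) with boundary limits $\le 0$, hence $\psi(t)(x)\le 0$ throughout; on the other hand, $\psi\equiv 0$ is itself a subgeodesic because $\pi^*\omega\ge 0$ on $S_{01}\times M$. The envelope therefore equals $0$, and $\varphi\equiv 0$ on $[0,1]$.

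Differentiating $0=\varphi(t)(x)=\int_0^t h(\phi_\tau x)\,d\tau$ at $t=0$ gives $h\equiv 0$, so $\mx\equiv 0$ and $X_M=\sg\mx=0$. Effectiveness of the $\fg$-action on $M$ forces $X=0$, whence $\gamma=g\in G$. The one step requiring care is the passage from ``$\varphi(1)=0$'' to ``$\varphi\equiv 0$''; the slicewise convexity of subgeodesics combined with Semmes' identification of smooth Mabuchi geodesics with the envelope (2.1) does this cleanly, avoiding any appeal to non-positive curvature of the Mabuchi metric or non-degeneracy of the associated distance function.
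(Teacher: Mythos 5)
Your proof is correct, but it takes a genuinely different route from the paper's. After the common first step --- writing $\gamma=g\exp iX$ via the structure theorem (Theorem 2.9 in the paper, not 2.11; similarly your Proposition 2.6, Lemma 2.13, and Proposition 2.9 are the paper's Proposition 2.5, Lemma 2.10, and Proposition 2.8) and reducing to showing $(\exp iX)^*\omega=\omega$ forces $X=0$ --- the paper argues locally: it takes a zero $y$ of $X_M$ (Lemma 2.10), linearizes the flow of $\exp sX$ at $y$ in unitary coordinates, diagonalizes $\tilde X$ as $\text{diag}(i\lambda_1,\dots,i\lambda_n)$, reads off from $(\exp iX)^*\omega=\omega$ at $y$ that every $\lambda_j=0$, and then concludes $X_M\equiv 0$ from the rigidity of Killing fields (a Killing field whose $1$-jet vanishes at a point vanishes identically). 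Your argument is global: you run the geodesic $\varphi(t)=\int_0^tg_\tau^*h\,d\tau$ of Proposition 2.5, use $i\ddb\varphi(1)=0$ together with $E(\varphi(1))=0$ to get $\varphi(1)=0$, and then pin down $\varphi\equiv 0$ by the envelope characterization (2.1), sandwiching between the constant subgeodesic $0$ and the slicewise convexity of arbitrary subgeodesics; differentiating $\int_0^th(g_\tau x)\,d\tau\equiv 0$ gives $h=-2\mx\equiv 0$, and effectiveness finishes. Each step checks out (in particular $JY=-X_M$ so $\cL_{JY}\omega=0$, and $\psi\equiv 0$ is indeed a subgeodesic since $\pi^*\omega\ge 0$). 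What each approach buys: the paper's proof is short, elementary, and independent of the energy/geodesic apparatus, at the cost of invoking Killing-field rigidity; yours avoids that rigidity fact entirely and reuses only machinery the paper develops anyway, in effect anticipating the uniqueness argument of Section 5 in a degenerate case. Both are complete proofs.
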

\begin{proof} Write $\gamma=g\exp iX$ with $g\in G$, $X\in\fg$ (Theorem 2.9). Thus $(\exp iX)^*\omega=\omega$.
By Proposition 2.10 $X$ vanishes at some $y\in M$, hence $y$ is fixed by all $g_s=\exp sX$, $s\in\bC$. Choose 
local coordinates $x_1,\dots,x_n$ centered at $y$ so that
$\omega=i\sum_j dx_j\wedge d\bar x_j$ at $y$. Passing to the Jacobian matrix defines a homomorphism
\[
\bR\ni t\mapsto \partial g_t/\partial(x_1,\dots,x_n)|_0\in \text{U}(n).
\]
The induced homomorphism of Lie algebras associates with $X$ an element $\tilde X\in\fu(n)$. We can 
arrange the coordinates so that $\tilde X$ is diagonal, diag$(i\lambda_1,\dots,i\lambda_n)$ with $\lambda_j\in\bR$. This implies 
$\partial g_s/\partial x|_0=\text{diag}(e^{i\lambda_1s},\dots, e^{i\lambda_ns})$, first for $s\in\bR$, and then for $s\in\bC$ by
analytic contiuation. Therefore at $y$
\[
\omega=(\exp iX)^*\omega=i\sum e^{-2\lambda_j}dx_j\wedge d\bar x_j,
\]
whence each $\lambda_j=0$. But then $\tilde X, X_M$, and so $X$ vanish, i.e., $\gamma=g\exp iX=g\in G$.
\end{proof}

\section{Existence of extremals} 

From now on $(M,\omega)$ is an $n$ dimensional connected compact K\"ahler manifold, on which a connected compact
Lie group $G$ acts by holomorphic isometries. The action admits a moment map $\mu:M\to\fg^*$, and extends 
to a holomorphic action of the complexification $G^\bC$ on $M$. Without loss of generality
we assume that the induced action of
$\fg$ is effective, if $X_M=0$ for some $X\in\fg$, then $X=0$. The main result of this section is the following.
\begin{prop} 
Given a bounded $v_0:M\to\bR$, among admissible $v\in\cH$, $v\le v_0$, there is at least 
one which maximizes
Monge--Amp\`ere energy $E(v)$; and among admissible $v\in\cH$, $v\ge v_0$, there is at least one 
that minimizes $E(v)$.
\end{prop}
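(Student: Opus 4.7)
The plan is to reduce both problems to optimizing a continuous function on the finite-dimensional space $\fg$, and to show that this function is proper. By Theorem 2.9, every $g\in G^\bC$ factors as $g=h\exp(iX)$ with $h\in G$, $X\in\fg$; since $G$ acts by isometries, $g^*\omega=(\exp iX)^*\omega$. Applying Proposition 2.5 to $Y=JX_M$ (for which $\cL_{JY}\omega=-\cL_{X_M}\omega=0$) with Hamiltonian $h=-2\mx$ supplied by Proposition 2.10 produces for every $X\in\fg$ a geodesic
\[
\varphi_X(t)=\int_0^t(\exp\tau JX_M)^*(-2\mx)\,d\tau\in\cK,\qquad \omega_{\varphi_X(t)}=(\exp itX)^*\omega,
\]
and Proposition 2.4 yields $E(\varphi_X(1)+c)=c$. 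Thus the admissible potentials are exactly $\{\varphi_X(1)+c:X\in\fg,\,c\in\bR\}$. The constraint $\varphi_X(1)+c\le v_0$ is $c\le F(X):=\inf_M(v_0-\varphi_X(1))$, and the constraint $\varphi_X(1)+c\ge v_0$ is $c\ge F^+(X):=\sup_M(v_0-\varphi_X(1))$, so problem (1.2) becomes the maximization of $F$ and (1.3) the minimization of $F^+$ on $\fg$.

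Both $F$ and $F^+$ are continuous: from the integral formula and smooth dependence of the flow on parameters, $X\mapsto\varphi_X(1)$ is continuous into $C(M)$, and $v_0$ is bounded. Because $\fg$ is finite-dimensional, the existence of extremals reduces to properness: $F(X)\to-\infty$ and $F^+(X)\to+\infty$ as $\|X\|\to\infty$ in any norm on $\fg$.

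This is the main step. The key observation is that every critical point $y$ of $\mx$ satisfies $X_M(y)=\sg\mx(y)=0$, hence also $JX_M(y)=0$, so $y$ is fixed by the flow $\exp(\tau JX_M)$; substituting into the integral gives $\varphi_X(1)(y)=-2\mx(y)$. Taking $y_1$ (resp.\ $y_0$) to be a minimum (resp.\ maximum) point of $\mx$ on $M$,
\[
F(X)\le v_0(y_1)+2\min_M\mx,\qquad F^+(X)\ge v_0(y_0)+2\max_M\mx.
\]
By effectiveness of the $\fg$-action, $\mx\not\equiv0$ for $X\ne0$, and the vanishing of its $\omega^n$-mean then forces $\max_M\mx>0>\min_M\mx$. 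The maps $X\mapsto\max_M\mx$ and $X\mapsto-\min_M\mx$ are continuous on $\fg$, positively homogeneous of degree one, and strictly positive on the compact unit sphere, so they admit a common positive lower bound $c_0$ there. Combined with the estimates above and the boundedness of $v_0$, this gives $F(X)\le\sup_Mv_0-2c_0\|X\|\to-\infty$ and $F^+(X)\ge\inf_Mv_0+2c_0\|X\|\to+\infty$, completing the argument. The delicate point in this plan is the reduction to fixed points of $\exp(\tau JX_M)$: once one sees that these stationary points contain the critical set of $\mx$ and that $\varphi_X(1)$ takes the explicit value $-2\mx$ there, the a priori infinite-dimensional properness question collapses to the elementary positive homogeneity of $\max_M\mx$ on the finite-dimensional $\fg$.
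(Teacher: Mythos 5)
Your proof is correct. The overall reduction is the same as the paper's: via Theorem 2.9 you replace $G^\bC$ by $\fg$, use the explicit geodesics $\varphi_X$ of Proposition 3.2 to parametrize all admissible potentials as $\varphi_X(1)+c$, and observe that the objective collapses to $F(X)=\inf_M\big(v_0-\varphi_X(1)\big)$ (which coincides with the paper's $E\big(U(g)+m(g)\big)$), so that everything hinges on properness over $\fg$. Where you genuinely diverge is in the properness estimate itself. The paper (Proposition 3.3) exploits the convexity of $t\mapsto\psi(t)(x)$ along the geodesic to get the two-sided pointwise inequality $h_X\le v-E(v)\le g_1^*h_X$, integrates it against $\omega^n$, and concludes that the $L^1$-norm $\|X\|=\int_M|h_X|\,\omega^n$ is controlled by $\max_Mv$, $\min_Mv$, $E(v)$; this yields compactness of the sets $K,L\subset G^\bC$, which is reused later (e.g.\ in the proof of Proposition 6.1). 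You instead evaluate $\varphi_X(1)$ exactly at the extrema of $\mx$ — fixed points of the flow $\exp(i\tau X)$, where the integral degenerates to $-2\mx$ — and then invoke degree-one homogeneity of $X\mapsto\max_M\mx$ and $X\mapsto-\min_M\mx$ together with compactness of the unit sphere to get the linear decay $F(X)\le\sup_Mv_0-2c_0\|X\|$. This avoids geodesic convexity entirely (your inequality is the equality case of the paper's convexity bound at the special points $y$ with $X_M(y)=0$), and it makes effectiveness of the action enter through the positivity of $c_0$ rather than through the nondegeneracy of the $L^1$-norm; the trade-off is that your bound is tailored to the existence statement, whereas the paper's Proposition 3.3 delivers a compactness statement in $G^\bC$ of independent later use. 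All the hypotheses you need (that $JX_M$ vanishes somewhere, that critical points of $\mx$ are fixed by $\exp(i\tau X)$, and that $\max_M\mx>0>\min_M\mx$ for $X\ne0$ because $\mx$ has zero mean and is not identically zero) are available from Corollary 2.7, the definition of the moment map, and the standing effectiveness assumption of section 3.
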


We need two auxiliary results.
\begin{prop} 
Suppose $v\in\cH$, $X\in\fg$, and $\omega_v=(\exp iX)^*\omega$. Let $g_t=\exp itX$ and
$h=-2\mx$. Then 
\[
\varphi(t)=\int_0^tg_\tau^*h\,d\tau\qquad\text{and}\qquad \psi(t)=\varphi(t)+E(v)t
\]
define geodesics $\varphi:\bR\to\cK$, $\psi:\bR\to\cH$. For each $t$, $\varphi(t),\psi(t)$ are admissible and $\psi(1)=v$.
\end{prop}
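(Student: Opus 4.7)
The plan is to derive Proposition 3.2 directly from Proposition 2.6 by choosing the input vector field to be $Y=JX_M=(iX)_M$, which is the infinitesimal generator of the one parameter subgroup $g_t=\exp itX$ of $G^\bC$. I first verify the hypotheses of Proposition 2.6: since $iX\in\fg^\bC$, Lemma 2.10 gives that $Y=(iX)_M$ vanishes somewhere; next $JY=J^2X_M=-X_M$, so $\cL_{JY}\omega=-\cL_{X_M}\omega=0$ because $G$ acts by symplectomorphisms; Proposition 2.11 applied to $X$ gives $i\ddb h=\cL_{JX_M}\omega=\cL_Y\omega$; and the normalization of the moment map yields $\int_Mh\,\omega^n=-2\int_M\mx\,\omega^n=0$. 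Thus Proposition 2.6, with the choice $c=E(v)$, immediately gives that $\varphi:\bR\to\cK$ and $\psi:\bR\to\cH$ are geodesics.

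Admissibility of $\varphi(t)$ and $\psi(t)$ is then almost automatic. Since $\psi(t)-\varphi(t)=E(v)t$ is constant on $M$, $i\ddb\psi(t)=i\ddb\varphi(t)$, and (2.6) applied in the proof of Proposition 2.6 (with $Y=JX_M$) yields
\[
\omega_{\psi(t)}=\omega_{\varphi(t)}=\omega+i\ddb\varphi(t)=g_t^*\omega.
\]
Since $g_t=\exp itX\in G^\bC$, both $\varphi(t)$ and $\psi(t)$ satisfy Definition 1.1 and are admissible.

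The only substantive remaining point is the boundary identification $\psi(1)=v$. The computation above at $t=1$ gives $\omega_{\psi(1)}=g_1^*\omega=(\exp iX)^*\omega=\omega_v$, so $c:=\psi(1)-v$ is constant on $M$. To pin down $c=0$ I will use Monge--Amp\`ere energy: from $dE(\xi)=\frac1\vo\int_M\xi\,\omega_u^n$ one sees that for a constant direction $\xi\equiv c$ the differential equals $c$, so $E(u+c)=E(u)+c$ for every $u\in\cH$ and constant $c$. Applying this to $\varphi(1)\in\cK$ (so $E(\varphi(1))=0$) gives
\[
E(\psi(1))=E\bigl(\varphi(1)+E(v)\bigr)=E(\varphi(1))+E(v)=E(v),
\]
while on the other hand $E(\psi(1))=E(v+c)=E(v)+c$. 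Comparing, $c=0$ and $\psi(1)=v$.

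I do not expect any real obstacle; the proof is essentially the bookkeeping of matching Proposition 3.2 with Proposition 2.6. The only mildly delicate point is the constant-fixing argument for $\psi(1)=v$, which is why I have set up the additive $E(v)t$ shift precisely so that the total energy along $\psi$ interpolates linearly from $0$ to $E(v)$ (as guaranteed by Proposition 2.5), and therefore lands on $v$ rather than on a translate of it.
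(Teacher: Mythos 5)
Your proof is correct and follows the same route as the paper's: verify the hypotheses of the geodesic-generating proposition for $Y=JX_M$ (vanishing somewhere, $\cL_{JY}\omega=0$, $i\ddb h=\cL_Y\omega$, $\int_M h\,\omega^n=0$), read off admissibility from the identity $\omega_{\psi(t)}=g_t^*\omega$ in (2.6), and pin down the additive constant in $\psi(1)=v$ by comparing $E(\psi(1))=E(v)$ with $E(v+c)=E(v)+c$. The only discrepancy is cosmetic: a few of your citations are shifted relative to the paper's numbering (the geodesic-generating result is Proposition 2.5, not 2.6, and the identity $i\ddb h=\cL_{JX_M}\omega$ is Proposition 2.8, not 2.11).
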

\begin{proof}
All claims but the last follow by combining Propositions 2.5, 2.8, and (2.6). As to the last, (2.6) gives
$\omega_{\psi(1)}=(\exp iX)^*\omega=\omega_v$, whence $\psi(1)=v+\text{const}$. The constant has to
be $0$ because $E\big(\psi(1)\big)=E(v)$.
\end{proof}
\begin{prop} 
If $a,b\in\bR$, 
\begin{align}
K=&\{g\in G^\bC: g^*\omega=\omega_v\text{ with }v\in\cH\text{ such that } a\le E(v),\,\max_M v\le b\}, \\
L=&\{g\in G^\bC: g^*\omega=\omega_v\text{ with }v\in\cH\text{ such that } a\le\min_M v,\, E(v) \le b\}
\end{align}
are compact subsets of $G^\bC$.
\end{prop}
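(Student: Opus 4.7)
The plan is to split the proof into closedness and boundedness of $K,L$ in $G^\bC$. For every $g\in G^\bC$ the exact real $(1,1)$-form $g^*\omega-\omega$ determines via Hodge theory a unique $u_g\in\cK$ with $i\ddb u_g=g^*\omega-\omega$, and this assignment is continuous in, say, sup norm. Every $v\in\cH$ with $\omega_v=g^*\omega$ is of the form $v=u_g+c$ for some $c\in\bR$, with $E(u_g+c)=c$, $\max_M(u_g+c)=\max_M u_g+c$, and similarly for $\min$. The defining conditions of $K,L$ therefore collapse to
\[
K=\{g\in G^\bC:\max_M u_g\le b-a\},\qquad L=\{g\in G^\bC:\min_M u_g\ge a-b\},
\]
each closed by continuity of $g\mapsto u_g$.

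For boundedness I use Theorem 2.9 to write $g=h\exp iX$ with $h\in G$, $X\in\fg$; since $h^*\omega=\omega$ it suffices to bound $X$. Fix a norm on $\fg$ and set $X=sY$ with $s\ge 0$, $|Y|=1$. Proposition 3.2, applied to the admissible potential for $\exp iY$ (equivalently, Propositions 2.5 and 2.8 applied to the vector field $JY_M$), identifies $u_{\exp isY}$ with the geodesic in $\cK$
\[
\varphi_Y(s)=\int_0^s(\exp i\tau Y)^*(-2\langle\mu,Y\rangle)\,d\tau.
\]
At any critical point $x^*$ of $\langle\mu,Y\rangle$ we have $Y_M(x^*)=0$ (from $d\langle\mu,Y\rangle=-\iota(Y_M)\omega$ and nondegeneracy of $\omega$), hence $JY_M(x^*)=0$, so $\exp i\tau Y$ fixes $x^*$ and $\varphi_Y(s)(x^*)=-2\langle\mu(x^*),Y\rangle\,s$. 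Choosing $x^*$ at the extrema of $\langle\mu,Y\rangle$ produces
\[
\max_M\varphi_Y(s)\ge -2M_{\min}(Y)\,s,\qquad \min_M\varphi_Y(s)\le -2M_{\max}(Y)\,s,
\]
where $M_{\min}(Y)=\min_M\langle\mu,Y\rangle$ and $M_{\max}(Y)=\max_M\langle\mu,Y\rangle$.

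The normalization $\int_M\langle\mu,Y\rangle\,\omega^n=0$ together with effectiveness ($Y\ne 0\Rightarrow Y_M\ne 0\Rightarrow\langle\mu,Y\rangle\not\equiv 0$) forces $M_{\min}(Y)<0<M_{\max}(Y)$ for every $Y\ne 0$. Both $M_{\min}$ and $M_{\max}$ are continuous and positively $1$-homogeneous on $\fg$, so compactness of the unit sphere yields $c>0$ with $-M_{\min}(Y), M_{\max}(Y)\ge c$ whenever $|Y|=1$. Membership of $\exp isY$ in $K$ (resp.\ $L$) then forces $s\le(b-a)/(2c)$, bounding $X$ and completing compactness. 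The main obstacle, as I see it, is precisely this linear a priori bound on $s$: locating fixed points of $\exp i\tau Y$ at the extrema of $\langle\mu,Y\rangle$ to extract linear-in-$s$ growth of $\max_M\varphi_Y(s)$ and $-\min_M\varphi_Y(s)$, and then making the growth rate uniform in the direction $Y$ via effectiveness; the closedness step is routine.
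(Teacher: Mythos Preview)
Your proof is correct but follows a genuinely different route from the paper's. Both arguments write $g=\gamma\exp iX$ via Theorem~2.9 and use the geodesic $\varphi(t)=\int_0^t(\exp i\tau X)^*h_X\,d\tau$ in $\cK$ with $h_X=-2\langle\mu,X\rangle$. The paper, however, does not evaluate at special points: it uses convexity of $t\mapsto\psi(t)(x)$ (where $\psi(t)=\varphi(t)+E(v)t$) to obtain the pointwise sandwich $h_X\le v-E(v)\le(\exp iX)^*h_X$ on all of $M$, and then combines this with $\int_M h_X\,\omega^n=0$ to bound the $L^1$-type norm $\|X\|:=\int_M|h_X|\,\omega^n$ directly by $2(b-a)\int_M\omega^n$. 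Effectiveness enters only to check that $\|\cdot\|$ is a genuine norm. Your argument is more geometric: you locate the extrema of $\langle\mu,Y\rangle$ as fixed points of $\exp i\tau Y$, read off exact linear-in-$s$ growth of $\varphi_Y(s)$ there, and then pass to a uniform rate via compactness of the unit sphere in $\fg$. The paper's approach avoids the two-step (pointwise, then uniform over directions) structure and yields an explicit bound in a single stroke; yours makes the blow-up mechanism transparent and, incidentally, your reformulation $K=\{g:\max_M u_g\le b-a\}$, $L=\{g:\min_M u_g\ge a-b\}$ handles closedness more explicitly than the paper, which simply asserts that $K,L$ are closed.
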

\begin{proof}
We start by defining a norm on $\fg$.  Let $\Delta=2\,\tr_\omega i\ddb$ denote the Laplacian on 
$C^\infty(M,\omega)$. If $X\in\fg$ and $h_X=-2\mx$, by Proposition 2.8 
$i\ddb h_X=\cL_{JX_M}\omega$.  Taking trace with respect to $\omega$, we obtain 
$\Delta h_X=2\,\tr_\omega \cL_{JX_M}\omega$, hence
with the Green operator $\Gamma:C^\infty(M,\omega)\to C^\infty(M,\omega)$
\begin{equation} 
h_X=\Gamma(2\,\tr_\omega\cL_{JX_M}\omega).
\end{equation}
Let
\[
||X||=\int_M|h_X|\omega^n.
\]
This is indeed a norm, for if $||X||=0$, then $h_X$ and $X_M=-\sg h_X/2$ vanish, whence $X=0$.

Now to the compactness of $K,L$. Write an arbitrary $g\in G^\bC$ as $g=\gamma\exp iX$, $\gamma\in G$,
$X\in\fg$ (Theorem 2.9). Thus $g^*\omega=(\exp iX)^*\omega$. If  $\omega_v=g^*\omega$ then $X$,
i.e. $h_X$, can be estimated in terms of  $\min_M v$, $\max_M v$, and $E(v)$ as follows. Let $g_t=\exp itX$, 
and with
$h_X$ defined in (3.3), $\psi(t)=\int_0^tg_\tau^*h_X\,d\tau+E(v)t$. By  Proposition 3.2 this is a geodesic.
In particular, $\psi(\cdot)(x)$ is convex for $x\in M$, whence
\begin{equation*} 
h_X+E(v)=\dot\psi(0)\le\psi(1)-\psi(0)\le\dot\psi(1)=g_1^*h_X+E(v).
\end{equation*}
Since $\psi(0)=0$ and $\psi(1)=v$ by Proposition 3.2, we obtain
\begin{equation} 
h_X\le v-E(v)\le g_1^*h_X.
\end{equation}

Suppose $g\in K$. Then (3.4) implies  $h_X\le b-a$; with $h^+=\max(h_X,0)$
\[
||X||=\int_M|h_X|\,\omega^n=\int_M(2h^+ -h_X)\,\omega^n=\int_M 2h^+\omega^n\le2(b-a)\int_M\omega^n.
\]
Similarly, if $g\in L$, and $h^-=\max(-h_X,0)$, then (3.4) gives $\min_M h_X=\min_M g_1^*h_X\ge a-b$, and
\[
||X||=\int_M|h_X|\,\omega^n=\int_M(2h^-+h_X)\,\omega^n=\int_M 2h^-\omega^n\le 2(b-a)\int_M\omega^n.
 \]
 These estimates show that the closed sets $K,L\subset G^\bC$ are contained in the compactum
 \[
 \Big\{\gamma\exp iX:\gamma\in G,\, X\in\fg,\, ||X||\le 2(b-a)\int_M\omega^n\Big\},
 \]
 therefore themselves are compact.
\end{proof}
\begin{proof}[Proof of Proposition 3.1]
Since $G^\bC$ leaves the de Rham class of $\omega$ invariant, for each $g\in G^\bC$ there is a unique
$u=U(g)$ that solves
\begin{equation} 
g^*\omega-\omega=i\ddb u,\qquad \int_M u\,\omega^n=0.
\end{equation}
As in the previous proof, the map $U:G^\bC\to C^\infty(M)$ can be expressed through the Green operator 
$\Gamma$ as
\[
U(g)=\Gamma\big(2\,\tr_\omega(g^*\omega-\omega)\big),
\]
which shows that $U$ is continuous. Let $m(g)=\min_M\big(v_0-U(g)\big)$. Suppose
$g^*\omega=\omega_v$. Then $v\le v_0$ if and only if $v=U(g)+m(g)+c$ with a constant $c\le 0$. As,
in general, $E(w+c)=E(w)+c$, maximizing $E(v)$ among admissible $v\le v_0$ reduces to maximizing
\begin{equation} 
E\big(U(g)+m(g)\big),
\end{equation}
a continuous function, over $G^\bC$. Let $a=\inf_M v_0$ and $b=\sup_M v_0$. Since the constant
function $a\le v_0$ is admissible, maximizing (3.6) over $G^\bC$ amounts to the same as maximizing over
$K$, see (3.1); and of course, the maximum over this compact set 
is attained. The existence of the minimizer
in the second statement follows similarly.
\end{proof}

\section{Characterization of the extremals} 

We continue with the setup of section 3. To investigate extremals of problems (1.2), (1.3),
it will be convenient to be able to change the K\"ahler form $\omega$ to $\omega'=\omega_w$ with 
$w\in\cH$. We start the section by discussing how notions involved in Theorems 1.2, 1.3 transform as 
we pass from $\omega$ to $\omega'$.

Let $\cH'=\cH(\omega')=\{u\in C^\infty(M): \omega'_u=\omega'+i\ddb u>0\}$. Consider the map
\begin{equation} 
\cH\ni v\mapsto v'=v-w\in\cH'.
\end{equation}
One checks that $\omega'_{v'}=\omega_v$, and that (4.1) is an isometry in Mabuchi's metric. Denote
Monge--Amp\`ere energy on $\cH'$ by $E':\cH'\to\bR$. Given $v\in\cH$, let $\psi:[0,1]\to\cH$ be a smooth
path connecting $w$ with $v$, and let $\psi'(t)=\psi(t)-w$. Continuing to denote $t$-derivatives by a dot,
\begin{equation}\begin{aligned} 
E'(v')&=\int_0^1dE'\big(\dot\psi'(t)\big)\,dt=\dfrac1\vo\int_0^1\int_M\dot\psi'(t)\big(\omega'_{\psi'(t)}\big)^n\,dt\\
&=\dfrac1\vo\int_0^1\int_M\dot\psi(t)\omega_{\psi(t)}^n\,dt=\int_0^1dE\big(\dot\psi(t)\big)\,dt=E(v)-E(w).
\end{aligned}\end{equation}
Hence finding extremals of $E$ and $E'$ is one and the same thing.

Next suppose $w\in\cH$  is admissible, $\omega'=\omega_w=\gamma^*\omega$ with some 
$\gamma\in G^\bC$. 
The action
\begin{equation} 
G^\bC\times M\ni(g,x)\mapsto\gamma^{-1}g\gamma\, x\in M,
\end{equation}
and its restriction to $G$, will be referred to 
as the modified action. If $v\in\cH$ is admissible, i.e., $\omega_v=g^*\omega$ with $g\in G^\bC$, then
$\omega'_{v'}=\omega_v=(\gamma^{-1}g)^*\omega'$ shows that $v'\in\cH'$ 
is also admissible; and 
the converse holds as well.

The map $\gamma:(M,\omega')\to(M,\omega)$ is an isomorphism of K\"ahlerian $G^\bC$-spaces, if
$(M,\omega')$ is endowed with the modified action (4.3)
(and $(M,\omega)$ with the original action).
In particular, the modified action of $G$ is by isometries of $(M,\omega')$. It follows that the moment map
of the modified action is $\mu'=\mu\circ\gamma$. If $\gamma\in G$, then $\mu'=\text{Ad}_{g^{-1}}^*\,\mu$ by equivariance.
To summarize:

\begin{prop} 
Suppose $w\in\cH$ is admissible, $\omega_w=\gamma^*\omega$, $\gamma\in G^\bC$. The map
$\cH\ni v\mapsto v-w\in\cH'$ is an isometry that sends Monge--Amp\`ere energy $E$ on $\cH$ to Monge--Amp\`ere
energy $E'$ on $\cH'$, plus a constant. Let
$v_0:M\to\bR$ and $v'_0=v_0-w$. An admissible $u\in \cH$ maximizes Monge--Amp\`ere energy $E(v)$
among admissible $v\in\cH$,  $v\le v_0$, if and only if $u'=u-w$ maximizes $E'(v')$ among
admissible $v'\in\cH'$, $v'\le v_0'$. For minimizers of $E$ over $\{v\in\cH\text{ admissible }:v\ge v_0\}$
a corresponding statement holds.

Furthermore, the moment maps $\mu:(M,\omega)\to\fg^*$, $\mu':(M,\omega')\to\fg^*$ for the original and modified
$G$-actions (see (4.3)) are related by $\mu'=\mu\circ\gamma$. If $\gamma^*\omega=\omega$, then
$\gamma\in G$ (by Proposition 2.11), and
$\mu'=\text{Ad}_{\gamma^{-1}}^*\,\mu$. 
\end{prop}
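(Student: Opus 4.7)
My plan is to assemble the observations made in the discussion preceding the proposition, supplying the few details left implicit; the bulk of the proof is bookkeeping.

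First I would dispose of the isometry and variational content. Since $v\mapsto v-w$ is affine, its differential at each $v\in\cH$ is the identity under the canonical trivializations $T_v\cH\approx C^\infty(M)\approx T_{v'}\cH'$. The identity $\omega'_{v'}=\omega_v$ noted just above (4.1) then makes the two Mabuchi inner products at $v$ and $v'=v-w$ coincide, so the map is an isometry; and $E'(v-w)=E(v)-E(w)$ is exactly the computation carried out in (4.2). As already observed between (4.3) and the proposition, $v\in\cH$ is admissible iff $v'=v-w\in\cH'$ is admissible, and the barrier conditions transfer pointwise from $v_0$ to $v_0'=v_0-w$; combined with the fact that $E$ and $E'$ differ by a constant, this gives the bijection between extremizers of the two problems, for both maximum and minimum.

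The only genuinely new step is the moment map identity, so here I would work out the details. First I would compute the infinitesimal generator $X_M'$ of the modified action $(g,x)\mapsto\gamma^{-1}g\gamma x$ at $X\in\fg$: differentiating in $t$ at $t=0$ yields
\[
X_M'|_x=(T_x\gamma)^{-1}\bigl(X_M|_{\gamma x}\bigr),
\]
i.e., $X_M'$ is the $\gamma$-pullback of the vector field $X_M$. Using $\omega'=\gamma^*\omega$ and naturality of $\iota$ under pullback,
\[
-\iota(X_M')\omega'=\gamma^*\bigl(-\iota(X_M)\omega\bigr)=\gamma^*d\langle\mu,X\rangle=d\bigl(\langle\mu,X\rangle\circ\gamma\bigr),
\]
so $\langle\mu',X\rangle-\langle\mu\circ\gamma,X\rangle$ is a constant. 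To see this constant is zero I would apply change of variables: since $\gamma$ is an orientation-preserving diffeomorphism and $(\omega')^n=\gamma^*\omega^n$,
\[
\int_M\bigl(\langle\mu,X\rangle\circ\gamma\bigr)(\omega')^n=\int_M\gamma^*\bigl(\langle\mu,X\rangle\omega^n\bigr)=\int_M\langle\mu,X\rangle\,\omega^n=0,
\]
forcing $\mu'=\mu\circ\gamma$.

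The final clause is immediate: if $\gamma^*\omega=\omega$ then Proposition 2.11 places $\gamma$ in $G$, and equivariance of the original moment map gives $\mu\circ\gamma=\text{Ad}^*_{\gamma^{-1}}\,\mu$. There is no serious obstacle; the only place warranting a little care is identifying $X_M'$ with the correct $\gamma$-pullback of $X_M$ (the order-reversal in $g\mapsto\gamma^{-1}g\gamma$ is a natural source of sign errors) and then verifying the normalization of $\mu'$ by change of variables.
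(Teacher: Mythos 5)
Your proposal is correct and follows the same route as the paper, which proves this proposition simply by the discussion preceding its statement (the identity $\omega'_{v'}=\omega_v$, the computation (4.2), and the observation that $\gamma:(M,\omega')\to(M,\omega)$ intertwines the modified and original actions). The only part the paper leaves implicit---that the Hamiltonians of the modified action are $\langle\mu,X\rangle\circ\gamma$ with the correct normalization---you work out explicitly via $X'_M=\gamma^*X_M$, naturality of contraction under pullback, and the change of variables $\int_M(\langle\mu,X\rangle\circ\gamma)(\omega')^n=\int_M\langle\mu,X\rangle\,\omega^n=0$, all of which is accurate.
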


Now suppose $v_0\in C(M)$. If $u\in\cH$, let $C_u=\{x\in M: u(x)=v_0(x)\}$.
\begin{prop} 
Suppose $u\in\cH$ is admissible, $\omega_u=\gamma^*\omega$ with $\gamma\in G^\bC$. If $u=v$
maximizes $E(v)$ among admissible $v\le v_0$, or if $u=v$ minimizes $E(v)$ among admissible
$v\ge v_0$, then the convex hull of $\mu(\gamma C_u)\subset\fg^*$ contains $0$.
\end{prop}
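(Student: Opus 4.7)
The natural first move is to invoke Proposition 4.1 with $w=u$, passing to the new K\"ahler form $\omega'=\omega_u=\gamma^*\omega$, the modified $G^\bC$-action, and the new barrier $v_0'=v_0-u$. Under this change the extremal becomes $u'=0\in\cK'$, the contact set is still $C_u$, and $v_0'$ vanishes on $C_u$ with sign $\ge 0$ in the maximization case and $\le 0$ in the minimization case. The moment map of the modified action is $\mu'=\mu\circ\gamma$, so it is enough to prove $0\in\operatorname{conv}\mu'(C_u)$.

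Argue by contradiction: suppose $0\notin K:=\operatorname{conv}\mu'(C_u)$. Since $C_u$ is closed in $M$, $\mu'(C_u)$ is compact, hence so is $K\subset\fg^*$. Hahn--Banach then gives $X\in\fg$ and $\delta>0$ with $\langle\xi,X\rangle\ge\delta$ for all $\xi\in K$; choose the sign of $X$ so that $h:=-2\langle\mu',X\rangle$ satisfies $h\le -2\delta$ on $C_u$ in the maximization case, and $h\ge 2\delta$ on $C_u$ in the minimization case (replace $X$ by $-X$ if needed).

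Build a competitor by the geodesic of Proposition 2.5, applied to the modified action on $(M,\omega')$: set $g_t=\exp(itX)\in G^\bC$ and
\[
\psi(t)=\int_0^t g_\tau^*h\,d\tau,
\]
a geodesic in $\cK'$ through $\psi(0)=0$ with velocity $\dot\psi(0)=h$; each $\psi(t)$ is admissible because $\omega'_{\psi(t)}=g_t^*\omega'$, and $E'(\psi(t))\equiv 0$. To respect the barrier, introduce the offset
\[
m(t)=\max_M\bigl(\psi(t)-v_0'\bigr)\quad\text{(max case)},\qquad m(t)=\min_M\bigl(\psi(t)-v_0'\bigr)\quad\text{(min case)},
\]
and let $w(t)=\psi(t)-m(t)$. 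Then $w(t)$ is admissible, lies on the correct side of $v_0'$, and $E'(w(t))=-m(t)$.

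A Danskin-type envelope calculation, based on the fact that $v_0'$ is continuous, $\psi$ is smooth in $(t,x)$, and $\psi(0)-v_0'=-v_0'$ attains its extremum $0$ on $C_u$, yields
\[
\lim_{t\to 0^+}\frac{m(t)}{t}=\max_{x\in C_u}h(x)\quad\text{(max case)},\qquad \lim_{t\to 0^+}\frac{m(t)}{t}=\min_{x\in C_u}h(x)\quad\text{(min case)},
\]
both strict in the sign prescribed above. Consequently $E'(w(t))=-m(t)>0=E'(0)$ for small $t>0$ in the maximization case, and $E'(w(t))<0=E'(0)$ in the minimization case, contradicting the extremality of $u'=0$. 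The only genuinely delicate point in this plan is the envelope identity for $m(t)$, specifically verifying that any family of argmax/argmin points $x_t$ for $\psi(t)-v_0'$ accumulates inside $C_u$ as $t\to 0^+$; this follows from compactness of $M$ and continuity of $v_0'$ via the standard sandwich $m(t)\ge \psi(t)(x_0)-v_0'(x_0)$ for fixed $x_0\in C_u$ together with the bound $\psi(t)-v_0'=\psi(0)-v_0'+t\dot\psi(0)+O(t^2)$ uniform in $x$.
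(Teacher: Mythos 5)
Your proposal is correct and follows essentially the same route as the paper: reduce to $u=0$ via Proposition 4.1, separate $0$ from the convex hull to get $h=-2\langle\mu',X\rangle$ of a fixed sign on the contact set, run the geodesic $\int_0^t g_\tau^*h\,d\tau$ of Proposition 3.2, and shift by a constant to beat the extremal. The only difference is cosmetic: where the paper checks the barrier condition by an explicit compact-neighborhood/continuity argument and then adds a small constant, you package the same estimate as a Danskin envelope derivative of the optimal offset $m(t)$, which you justify correctly.
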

\begin{proof}
Consider first the case when $u=0$. When $u$ minimizes, $v_0\le 0$, when $u$ maximizes, $v_0\ge 0$.
Since $\omega=\omega_u=\gamma^*\omega$, by Proposition 4.1
$\mu\circ\gamma=\text{Ad}_{\gamma^{-1}}^*\,\mu$. Hence the convex hull of 
$\mu(\gamma C_u)\subset\fg^*$ contains $0$
if and only if the convex hull of $\mu(C_u)\subset\fg^*$ contains $0$.
Suppose this latter does not contain $0$. Then there is a linear form
$l:\fg^*\to\bR$ such that $l\big(\mu(x)\big)<0$ for all $x\in C_0$. Such a form $l$ is evaluation at some
$X\in\fg$. Thus 
\begin{equation} 
h=-2\mx>0 \qquad \text{on } C_0,
\end{equation}
and in fact $U=\{x\in M:h(x)>\var\}$ is a neighborhood of $C_0$ for some $\var>0$. Let $g_t=\exp itX$ and
$\varphi(t)= \int_0^tg_\tau^*h\,d\tau$. By Proposition 3.2 
$E\big(\varphi(t)\big)= 0$ and  $\varphi(t)$ is admissible for all $t$. Fix a compact neighborhood $K\subset U$ of $C_0$. 
Since $h|U>\var$, for small $t\in\bR$ and $y\in K$
\[
\varphi(t)(y)\ge\var t \text{ if } t>0,\qquad \varphi(t)(y)\le\var t\text{ if }t<0.
\]

If $u$ minimizes (maximizes), then 
$\varphi(0)(x)=0>v_0(x)$, respectively $\varphi(0)(x)=0<v_0(x)$, when
$x\in M\setminus \text{int}\,K$. It follows that for small $t$ and on all of $M$
\begin{align*}
&\varphi(t)>v_0,\quad\text{ if } t>0 \text{ and }u=0\text{ minimizes},\\
&\varphi(t)<v_0,\quad\text{ if } t<0 \text{ and }u=0\text{ maximizes.}
\end{align*}
The inequalities continue to hold if a small constant $c$ is added to $\varphi(t)$. But this contradicts 
the extremality of $u=0$, since $\varphi(t)+c$ is also admissible, and $E\big(\varphi(t)+c\big)=c=E(u)+c$.

Second, consider a general extremal $u$. Let $\omega'=\omega_u$,
$v_0'=v_0-u$, and $E':\cH'\to\bR$ the Monge--Amp\`ere energy as in Proposition 4.1 (but with $w=u$). Since
the contact sets $\{u=v_0\}$ and $\{0=v_0'\}$ agree, Proposition 4.1 and the special case we just proved 
imply the necessity of the condition in Proposition 4.2 in general. 
\end{proof}

Next we prove sufficiency.
\begin{prop} 
Suppose $u\in\cH$ is admissible, $\omega_u=\gamma^*\omega$ with $\gamma\in G^\bC$; and the
convex hull of $\mu(\gamma C_u)\subset\fg^*$
contains $0$. If $u\le v_0$, then $u=v$ maximizes $E(v)$ among all admissible $v\le v_0$. If $u\ge v_0$
and $v_0\in\po$, then $u=v$ minimizes $E(v)$ among all admissible $v\ge v_0$.
\end{prop}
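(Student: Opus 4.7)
By Proposition 4.1 applied with $w = u$, I first reduce to the case $u = 0$, which replaces the K\"ahler form by $\omega' = \omega_u$, the moment map by $\mu' = \mu \circ \gamma$, and the barrier by $v_0 - u$. The contact set is unchanged, the $\omega$-plurisubharmonicity of $v_0$ passes to $\omega'$-plurisubharmonicity of $v_0 - u$, and the hypothesis becomes: the convex hull of $\mu'(C_0) \subset \fg^*$ contains $0$. For any admissible competitor $v$ in the reduced problem, write $\omega_v = g^*\omega'$ and apply Theorem 2.9 to decompose $g = \gamma_0 \exp(iX)$ with $\gamma_0 \in G$, $X \in \fg$; since $\gamma_0^*\omega' = \omega'$, one has $\omega_v = (\exp iX)^*\omega'$, so Proposition 3.2 furnishes a smooth geodesic $\psi \colon \bR \to \cH'$ with $\psi(0) = 0$, $\psi(1) = v$, and $\dot\psi(t) = g_t^*h + E(v)$, where $h = -2\langle\mu', X\rangle$ and $g_t = \exp(itX)$.

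Because each $t \mapsto \psi(t)(x)$ is smooth and convex, I would then exploit the two-sided secant bound
\[
h(x) + E(v) = \dot\psi(0)(x) \;\le\; v(x) \;\le\; \dot\psi(1)(x) = h(g_1 x) + E(v).
\]
In the maximum problem, $v \le v_0$ together with $v_0 = 0$ on $C_0$ gives $v \le 0$ on $C_0$, hence $h(x) + E(v) \le 0$ there, i.e.\ $\langle\mu'(x), X\rangle \ge E(v)/2$ on $C_0$. The hypothesis supplies points $x_1, \dots, x_k \in C_0$ and weights $\lambda_i \ge 0$ with $\sum \lambda_i = 1$ and $\sum \lambda_i \mu'(x_i) = 0$; averaging then yields $0 = \sum_i \lambda_i \langle\mu'(x_i), X\rangle \ge E(v)/2$, so $E(v) \le 0 = E(u)$. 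In the minimum problem I instead invoke Proposition 2.2, whose hypothesis $v_0 \in \po$ makes $\{w \in \po : w \ge v_0\}$ geodesically convex; since both endpoints of $\psi$ lie in this set, so does the whole geodesic, giving $\psi(t) \ge v_0$ on $[0,1]$. On $C_0$ this reads $\psi(t)(x) \ge 0 = \psi(0)(x)$, and the smooth convex function $t\mapsto \psi(t)(x)$ must then satisfy $\dot\psi(0)(x) \ge 0$, i.e.\ $\langle\mu'(x), X\rangle \le E(v)/2$; the same averaging yields $E(v) \ge 0$.

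The step I expect to require the most care is the minimum case, precisely because the $\omega$-plurisubharmonicity of $v_0$ enters in an essential, asymmetric way. In the maximum case the pointwise barrier $v \le v_0$ translates directly into information on $v$ at the endpoint $t=0$ of the geodesic, which pairs cleanly with the lower slope bound $\dot\psi(0) \le v$. In the minimum case the analogous pointwise consequence $v \ge 0$ on $C_0$ lies on the wrong side of the secant relative to the upper slope bound $v \le \dot\psi(1)$, so the pointwise barrier alone would only give information about $h$ at translates $g_1 x$, typically outside $C_0$. One therefore cannot avoid propagating the barrier along the entire geodesic, which is exactly the content of Proposition 2.2 and genuinely uses $\omega + i\ddb v_0 \ge 0$; this also explains why no such hypothesis appears in Theorem 1.2.
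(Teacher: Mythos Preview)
Your proof is correct and follows essentially the same route as the paper's: reduce to $u=0$ via Proposition 4.1, connect $0$ to a competitor $v$ by the geodesic of Proposition 3.2, use convexity (directly via the secant bound in the maximum case, via Proposition 2.2 in the minimum case) to get the sign of $\dot\psi(0)=h+E(v)$ on $C_0$, and average against the convex combination witnessing $0\in\text{conv}\,\mu'(C_0)$. One small point of notation: after the reduction your claim ``$\gamma_0^*\omega'=\omega'$'' is only correct if $\gamma_0\in G$ is understood to act on $(M,\omega')$ via the \emph{modified} action $(g,x)\mapsto\gamma^{-1}g\gamma\,x$ of (4.3); the paper sidesteps this by simply writing ``assume $u=0$ and $\gamma=e$'', which you could do as well.
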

The convex hull condition is equivalent to the existence of a Borel probability measure $m$ (even one with finite
support) on the contact set $C_u$ such that
\begin{equation} 
\int_{C_u}(\mu\circ\gamma)\,dm=0.
\end{equation}
\begin{proof}
Again it suffices to prove under the assumption $u=0$ and $\gamma=e$; the general case will follow in
light of Proposition 4.1. Suppose $v\in\cH$ is another admissible potential, $\omega_v=g^*\omega$.
By Theorem 2.9 $g$ can be taken of form $g=\exp iX$, $X\in\fg$. 
With $h=-2\mx$, by Proposition 3.2
\[
\psi(t)=\int_0^t(\exp i\tau X)^*h\,d\tau+E(v)t, \qquad 0\le t\le 1,
\]
defines a geodesic connecting $0$ with $v$. By convexity (Proposition 2.2), if $0,v\le v_0$ then 
$\psi(t)\le v_0$; if $0,v\ge v_0$ then $\psi(t)\ge v_0$, for $0\le t\le1$. Also $\psi(0)(x)=0$ when $x\in C_0$.

In the first case of the proposition it follows that $0\ge\dot\psi(0)(x)=h(x)+E(v)$ for $x\in C_0$. But (4.5) implies 
$\int_{C_0}h\,dm=0$, and so 
\[
E(u)=0\ge\int_{C_0}\big(h+E(v)\big)\,dm=E(v),
\]
as claimed. In the second case we have $0\le\dot\psi(0)(x)=h(x)+E(v)$ for $x\in C_0$. 
Therefore 
\[
E(u)=0\le\int_{C_0}\big(h+E(v)\big)\,dm=E(v),
\]
which completes the proof.
\end{proof}

\section{Uniqueness of extremals} 

Let $(M,\omega)$, the $G$- and $G^\bC$-actions, and the moment map $\mu:M\to\fg^*$ be as in sections 3, 4. 
With $X\in\fg$ put
\[
N_X=\{x\in M:\mx\text{ vanishes to second order at }x\},
\]
and introduce the assumption
\begin{equation} 
\text{If }X\in\fg\setminus\{0\},\text{ the convex hull of }\{\mu(x):x\in N_X\}\subset\fg^*\text{ avoids }0.
\end{equation}
This latter is the same as requiring that under some linear map $\fg^*\to\bR$ the image of $\mu(N_X)$ be
on one side of $0\in\bR$. Thus (5.1) is equivalent to
\begin{equation} 
\text{If } X\in\fg\setminus\{0\},\text{ there is a }Y\in\fg\text{ such that }\langle\mu,Y\rangle>0\text{ on }N_X.
\end{equation}

Here are two examples. We use dagger $^\dag$ to indicate adjoint of a matrix.
The standard action of $G=\SU(n+1)$ on $\bC^{n+1}$ descends to an action
on projective space $M=\bP_n=\big(\bC^{n+1}\setminus\{0\}\big)/\sim$, preserving the Fubini--Study form 
$\omega$. The Lie algebra $\fg$ consists of traceless skew adjoint matrices $X$, $X^\dag=-X$. By
Kirwan \cite[Lemma 2.5]{K84}, 
for example, the pull back $\tilde\mu$ of the moment map $\mu:\bP_n\to\fg^*$ by the projection
$\bC^{n+1}\setminus\{0\}\to\bP_n$ is
\begin{equation} 
\langle\tilde\mu(z),X\rangle=-i\dfrac{z^\dag Xz}{z^\dag z},\qquad z\in \bC^{n+1}\setminus\{0\}
\text{ a column vector},\, X\in\fg,
\end{equation}
up to a positive constant factor that is determined by the normalization of the Fubini--Study metric.

Assumption (5.2) is satisfied. It suffices to verify this when $X$ is diagonal with diagonal entries 
$ia_0,\dots,ia_n\in i\bR$,
not all $0$. The simultaneous vanishing of  $\langle\tilde\mu(z),X\rangle=\sum a_j|z_j|^2/\sum|z_j|^2$ and 
$d\langle\tilde\mu(z),X\rangle(JX_M)=$
\[
\partial_t|_{t=0}\langle\tilde\mu\big((\exp itX)z\big),X\rangle
=2\,\frac{\sum a_j^2|z_j|^2\sum|z_j|^2-(\sum a_j|z_j|^2)^2}{(\sum|z_j|^2)^2}
\]
implies $a_jz_j=0$ for all $j$. Say, $a_0\neq 0$. Then $x=(x_0:\ldots:x_n)\in N_X$ implies $x_0=0$, and 
so the diagonal matrix $Y\in\fg$ with entries $-in,i,\dots,i$ satisfies $\langle\mu,Y\rangle>0$ on $N_X$.

As a second example, consider the unit sphere $S^2\subset\bR^3$, with the inherited conformal, hence
complex, structure and rotation invariant K\"ahler form (=Fubini--Study form, if $S^2$ is identified
with $\bP_1$). Let $M=(S^2)^n$ and $\omega$ the product K\"ahler form on $M$. The group
$G=\text{SO}(3)$ acts on $M$ diagonally by rotations. If $\fg^*$ is suitably
identified with $\bR^3$, the moment map is
\[
\mu(x_1,\dots,x_n)=x_1+\dots+x_n\in\bR^3\approx\fg^*,\qquad (x_j)\in M,
\]
see \cite[2.8]{K84}, and $\mx$ is the composition of $\mu$ with a linear $\pi:\bR^3\to\bR$. Its critical
points occur where each $x_j\in S^2$ is an extremum point of $\pi|S^2$. This means each $x_j$ must agree
with one of a fixed pair of antipodal points in $S^2$. If $n$ is odd then $\mx\neq 0$ on the critical set,
$N_X=\emptyset$, and (5.1) is satisfied. If $n$ is even then $N_X$ is nonempty, and along with any
$x$ it contains $-x$ as well. Since $\mu(x)+\mu(-x)=0$, (5.1) fails.

Now to the uniqueness in problems (1.2), (1.3). Let $v_0:M\to\bR$ be continuous.
\begin{prop} 
Assume (5.1) (or equivalently (5.2)). There is a unique  $v$ that maximizes $E(v)$ among
admissible $v\le v_0$. If $v_0\in C^2(M)$ and 
$\omega+i\ddb v_0>0$, then there is a unique $v$ that minimizes
$E(v)$ among admissible $v\ge v_0$.
\end{prop}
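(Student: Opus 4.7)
\emph{Proof plan.} The argument is by contradiction; suppose $u_0$ and $u_1$ are two extrema. Applying Proposition~4.1 with $w=u_0$ reduces to the case $u_0=0$, and condition~(5.1) is preserved because the transported moment map $\mu'=\mu\circ\gamma_0$ satisfies $\mu'(N'_X)=\mu(N_X)$. Writing $\omega_{u_1}=g^*\omega$, Theorem~2.9 and the $G$-invariance of $\omega$ let me take $g=\exp(iX)$ with $X\in\fg$. Proposition~3.2 then produces the admissible geodesic $\psi(t)=\int_0^t g_\tau^*h\,d\tau$, $g_t=\exp(itX)$, $h=-2\langle\mu,X\rangle$, connecting $0$ to $u_1$ (the $E(v)t$ term vanishes because $E(u_0)=E(u_1)=0$). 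By Mabuchi's linearity $E(\psi(t))\equiv 0$, and Proposition~2.2 preserves the barrier constraint (using $v_0\in\po$, which holds in the minimization case since $\omega+i\ddb v_0>0$); hence every $\psi(t)$ is again extremal, its contact set $C_{\psi(t)}$ is nonempty, and Proposition~4.2 gives $0\in\mathrm{conv}(\mu(g_tC_{\psi(t)}))$. The plan is to prove that $g_tC_{\psi(t)}\subset N_X$ for every $t\in(0,1)$: (5.1) then forces $X=0$, so $\omega_{u_1}=\omega$, $u_1$ is constant, and $E(u_1)=0$ yields $u_1=0=u_0$.

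The maximization case is straightforward. For $x_0\in C_{\psi(t_0)}$ with $t_0\in(0,1)$, the convex function $t\mapsto\psi(t)(x_0)$ attains its upper bound $v_0(x_0)$ at the interior point $t_0$ and must therefore be constant on $[0,1]$. Consequently $\dot\psi(t_0)(x_0)=h(g_{t_0}x_0)=0$ and $\ddot\psi(t_0)(x_0)=2|X_M(g_{t_0}x_0)|^2=0$, placing $g_{t_0}x_0$ in $N_X$.

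The minimization case is the main obstacle, since a convex function may attain an interior minimum without being constant, and the analogous argument only yields $h(g_{t_0}x_0)=0$. To extract $X_M(g_{t_0}x_0)=0$ as well, I combine the Monge--Amp\`ere equation~(2.3) for $\psi$ with the hypothesis $v_0\in C^2$, $\omega+i\ddb v_0>0$. Set $\Phi(s,x)=\psi(\re s)(x)$ on $S_{01}\times M$ and $f=\Phi-v_0$; then $f\in C^2$, $f\ge 0$, and $f(t_0,x_0)=0$, so $df=0$ and the $(1,1)$-form $i\ddb f$ is semi-positive at that point. In the decomposition $\pi^*\omega+i\ddb\Phi=(\pi^*\omega+i\ddb v_0)+i\ddb f$ the left side is semi-positive of rank $n$ by (2.3), while both summands on the right are semi-positive; hence along the one-dimensional kernel direction $\xi$ of the left side, both summands must vanish on $(\xi,\bar\xi)$. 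Strict positivity of $\omega+i\ddb v_0$ forces the $M$-component $\xi_x$ to vanish. A Schur-complement computation (using that the $x$-block $\omega_\psi$ of $\pi^*\omega+i\ddb\Phi$ is positive) identifies this with the vanishing of the mixed $s$-$x$ block, i.e.\ $\partial_{\bar x_j}\dot\psi(t_0)(x_0)=0$ for all $j$, which is equivalent to $dh(g_{t_0}x_0)=0$ and hence to $X_M(g_{t_0}x_0)=0$. Combined with $h(g_{t_0}x_0)=0$ (the interior-minimum condition for $t\mapsto\psi(t)(x_0)$), this places $g_{t_0}x_0$ in $N_X$ and closes the argument.
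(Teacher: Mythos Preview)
Your argument is correct and, for the minimization half, genuinely different from the paper's. Both proofs reduce to $u_0=0$, build the admissible geodesic $\psi$ from Proposition~3.2, and aim to show that the contact data lands in $N_X$. The maximization case is essentially identical to the paper's: a convex function touching its upper barrier at an interior point is constant, and the paper's computation $\ddot\psi(t)(y)=2\omega(X_M,JX_M)|_{g_ty}$ gives $X_M(g_ty)=0$.

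For the minimization case the paper works at the boundary $t=0$: it fixes the measure $m$ on $C_0$, uses the Monge--Amp\`ere foliation through $(0,y)$ together with the Hopf--Oleinik boundary lemma to force the leaf to be constant, and only then deduces $\psi(\cdot)(y)\equiv\text{const}$. Your route avoids both the foliation and the Hopf lemma by moving to an interior $t_0\in(0,1)$: since $\psi(t_0)$ is itself a minimizer, Proposition~4.2 supplies the convex--hull condition directly at $t_0$, and at an interior minimum of $f=\Phi-v_0$ the pointwise inequality $i\ddb f\ge 0$ holds. Splitting $\pi^*\omega+i\ddb\Phi$ as $(\pi^*\omega+i\ddb v_0)+i\ddb f$ and using strict positivity of $\omega+i\ddb v_0$ then forces the null direction of the rank--$n$ form to be $\partial_s$; since $\partial_s$ is null it is orthogonal to every $\partial_{x_j}$, so the mixed block vanishes and $dh(g_{t_0}x_0)=0$. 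This is more elementary. The price is that your argument, as written, needs $v_0\in C^2$ and $\omega+i\ddb v_0>0$ at every possible contact point, hence globally; the paper's boundary approach, by anchoring everything to the fixed measure $m$ on $C_0$, also yields the sharper Proposition~5.2, where these hypotheses are only imposed near $\mathrm{supp}\,m$.
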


Later we will need that in the minimization problem a more liberal if slightly technical condition also
implies uniqueness:

\begin{prop} 
Assume (5.1). Consider a $v_0\in\po$ and an admissible $u\ge v_0$, $\omega_u=\gamma^*\omega$ with some
$\gamma\in G^\bC$. If there is a Borel probability measure $m$ on the contact set $C_u=\{u=v_0\}$ such that 
$\int_{C_u}(\mu\circ\gamma)\,dm=0$, $v_0$ is $C^2$ in a neighborhood of $\supp m$, and $\omega+i\ddb v_0>0$
there, then $u$ is the unique minimizer of $E(v)$ among admissible $v\ge v_0$.
\end{prop}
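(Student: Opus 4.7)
The plan is to reduce the problem via Proposition 4.1, deform a putative second minimizer through an admissible geodesic supplied by Proposition 3.2, extract first-order information from the boundary of $[0,1]$, and then combine a K\"ahler identity with a second-order Hessian analysis at contact points to place $\supp m$ inside $N_X$ and reach a contradiction with (5.1).

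First I would apply Proposition 4.1 with $w=u$ to reduce to $u=0$, $\gamma=e$: after renaming, $\omega_u=\omega$, $v_0\le 0$, $v_0=0$ on $C_u\supset\supp m$, and $\int\mu\,dm=0$. Suppose $v$ is a second admissible minimizer, $\omega_v=g^*\omega$. By Theorem 2.9 write $g=\eta\exp(iX)$ with $\eta\in G$, $X\in\fg$; since $\eta$ preserves $\omega$, we may assume $\omega_v=(\exp iX)^*\omega$. Proposition 3.2 then delivers the admissible geodesic $\psi(t)=\int_0^t(\exp i\tau X)^*h\,d\tau$ with $h=-2\mx$, satisfying $\psi(0)=0$ and $\psi(1)=v$ (using $E(v)=0$). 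Proposition 2.2 gives $\psi(t)\ge v_0$ on $[0,1]$, so every $\psi(t)$ is itself an admissible minimizer. For $p\in\supp m$ the non-negative function $F(t,x)=\psi(t)(x)-v_0(x)$ attains $0$ at $(0,p)$; the right-derivative in $t$ yields $h(p)\ge 0$, and integrating against $m$ gives $\int h\,dm=-2\langle\int\mu\,dm,X\rangle=0$, whence $h\equiv 0$ on $\supp m$. Since $v_0\le 0=v_0(p)$ and $v_0$ is $C^2$ near $p$, also $dv_0(p)=0$, so the full gradient of $F$ vanishes at $(0,p)$.

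The main step is the second-order analysis at $(0,p)$. Testing $F\ge 0$ along curves $(as,p+sw)$ with $a\ge 0$ shows that the real Hessian of $F$ at $(0,p)$ is positive semidefinite on the half-space $\{a\ge 0\}\times T_pM$. Writing $\alpha=(JX_Mh)(p)$, $\beta=\nabla h(p)$, $V=-\mathrm{Hess}_x v_0(p)$, this is equivalent to $\alpha V\ge\beta\beta^T$ as real symmetric matrices (with the rider that $\alpha=0$ forces $\beta=0$). The K\"ahler identity $\iota(X_M)\omega=dh/2$ gives $\beta=2JX_M(p)$ and $\alpha=dh(JX_M)(p)=2|X_M(p)|^2=|\beta|^2/2$; hence if $\beta\ne 0$, testing against $\hat\beta=\beta/|\beta|$ forces $V(\hat\beta,\hat\beta)\ge 2$. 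On the other hand, choose unitary holomorphic coordinates at $p$ so that $\omega(p)=i\sum dz^j\wedge d\bar z^j$, and Taylor expand $v_0(p+z)=\re B(z,z)+A(z,\bar z)+O(|z|^3)$ with $B_{jk}=\partial^2_{z_jz_k}v_0(p)$ complex symmetric and $A_{jk}=\partial^2_{z_j\bar z_k}v_0(p)$ Hermitian. The inequality $v_0\le 0$, combined with phase rotation $z\mapsto e^{i\theta}z$, gives $|B(z,z)|\le -A(z,\bar z)$; and $\omega+i\ddb v_0>0$ at $p$ gives $A>-I$. Therefore $V(z,z)=-\re B(z,z)-A(z,\bar z)\le |B(z,z)|-A(z,\bar z)\le -2A(z,\bar z)<2|z|^2$ strictly for $z\ne 0$, contradicting $V(\hat\beta,\hat\beta)\ge 2$. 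Hence $\beta=0$, so $dh(p)=0$, and with $h(p)=0$ we obtain $p\in N_X$.

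Thus $\supp m\subset N_X$. If $X\ne 0$, hypothesis (5.1) (equivalent to (5.2)) yields $Y\in\fg$ with $\langle\mu,Y\rangle>0$ on $N_X$, contradicting $\int\langle\mu,Y\rangle\,dm=\langle\int\mu\,dm,Y\rangle=0$. So $X=0$, $\omega_v=\omega=\omega_u$, and $v=u+c$ for a constant $c$; the common value $E(v)=E(u)$ forces $c=0$ and $v=u$. The principal obstacle is the second-order step: the PSD Hessian condition alone does not force $\beta=0$, and it is precisely the combination of the K\"ahler identity $\alpha=|\beta|^2/2$ with the strict $\omega$-plurisubharmonicity of $v_0$ at $p$ (which sharpens the bound on $V$ from $\le 2I$ to $<2I$) that closes the argument.
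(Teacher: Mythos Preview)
Your argument is correct and takes a genuinely different route from the paper's. After the common reduction to $u=0$, $\gamma=e$, and the first-order step giving $h(p)=0$ for $p\in\supp m$, the paper invokes the Monge--Amp\`ere foliation of the geodesic (Bedford--Kalka) together with the Hopf--Oleinik boundary point lemma: the leaf through $(0,y)$ is the graph of a holomorphic map $f$, along which the harmonic function $(\pi^*w+\Psi)(s,f(s))$ dominates the subharmonic $(w+v_0)\circ f$ with matching value and normal derivative at $s=0$; Hopf--Oleinik forces equality, strict $\omega$-plurisubharmonicity of $v_0$ then forces $f$ constant, hence $\psi(\cdot)(y)\equiv 0$, and differentiating twice gives $X_M(y)=0$. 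You bypass both the foliation and Hopf--Oleinik: you read the positive semidefiniteness of the real Hessian of $F=\psi-v_0$ at the boundary minimum $(0,p)$, use $dh=2\iota(X_M)\omega$ to see that the $tt$-entry $\alpha$ equals exactly $|\beta|^2/\lambda$, where $\lambda$ is the strict upper bound on $-\mathrm{Hess}\,v_0$ coming from $v_0\le 0$ combined with $\omega+i\partial\bar\partial v_0>0$, so the Schur complement inequality $\alpha V\ge\beta\beta^T$ forces $\beta=0$. This is more elementary and self-contained; the paper's route is more geometric and yields the stronger intermediate fact that $\psi(t)(y)$ is constant in $t$.

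Two small remarks. First, your normalizations are inconsistent: the identity $\alpha=|\beta|^2/2$ holds when $|\cdot|$ is the K\"ahler norm, i.e.\ in geodesic normal coordinates where $\omega(p)=\tfrac{i}{2}\sum dz^j\wedge d\bar z^j$ and hence $A>-\tfrac12 I$; with your stated $\omega(p)=i\sum dz^j\wedge d\bar z^j$ one instead gets $\alpha=|\beta|^2/4$ and the bound $V<4I$. Either consistent choice closes the argument, so this is cosmetic. Second, you take $v$ to be a \emph{second minimizer} with $E(v)=0$; to conclude that $u$ is \emph{the} unique minimizer you should first observe (via Proposition~4.3) that the hypothesis on $m$ already makes $u$ a minimizer, or---as the paper does---run the argument for any admissible $\bar u\ge v_0$ with $E(\bar u)\le E(u)$ and note that the first-order step then also forces $E(\bar u)=E(u)$.
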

\begin{proof}[Proof of Propositions 5.1, 5.2] 
In Proposition 5.1 the existence of extremals is the content of Proposition 3.1. As to uniqueness, the statement of
Proposition 5.1 about minimizers follows from Proposition 5.2. Indeed, Proposition 4.2 implies that if in the scenario of
Proposition 5.1 an admissible $u\ge v_0$ minimizes energy, then there is a Borel probability measure $m$ on $C_u$
such that $\int_{C_u}(\mu\circ\gamma)\,dm=0$. Since $v_0\in C^2(M)$ and $\omega+i\ddb v_0>0$ everywhere, 
Proposition 5.2 applies and gives uniqueness.

This leaves us with proving uniqueness of maximizers in Proposition 5.1, and Proposition 5.2. We will refer to the former 
as Case A, to the latter as Case B. In what follows, let $u,\bar u\le v_0$ be two maximizers in Case A, 
$E(u)=E(\bar u)$; or $u$ as in 
Case B and $\bar u\ge v_0$ admissible, $E( u)\ge E(\bar u)$. In both cases we need to show $u=\bar u$. 

In light of Proposition 4.1 we can assume $ u=0$ and, in case B, $\gamma=e$. 
Let $\omega_{\bar u}=g^*\omega$, $g\in G^\bC$. By Theorem 2.9 we can also assume
$g=\exp iX$, $X\in\fg$. With $g_t=\exp itX$ and $h=-2\mx$, 
\begin{equation} 
\psi(t)=\int_0^{t}g_\tau^*h\,d\tau+tE(\bar u),\qquad 0\le t\le 1,
\end{equation}
defines a geodesic in $\cH$, 
$\psi(0)=0$ and $\psi(1)=\bar u$ (Proposition 3.2).
By convexity, Proposition 2.2, $\psi(t)\le v_0$ in  Case A and $\psi(t)\ge v_0$ in Case B.
In Case A $u=0$ is known to be a maximizer. Therefore by Proposition 4.2 we can find a Borel probability measure 
$m$ on $C_0$, that we fix, such that $\int_{C_0}\mu\,dm=0$. We claim that in both cases $\psi(\cdot)(y)$ is constant for
$m$--almost every $y\in C_0$.

Let us start with Case A. With any $y\in C_0$, $\psi(\cdot)(y)$ attains its maximum at $0$, whence 
$0\ge\dot\psi(0)(y)=h(y)$. Since $\int_{C_0}h\,dm=-2\int_{C_0}\mx\,dm=0$, for $m$--almost every $y$ we have
$\dot\psi(0)(y)=0$. But $\psi(\cdot)(y)$ is convex, hence it must be constant, as claimed.

In case B, if $y\in C_0$, $\psi(\cdot)(y)$ has a minimum at $0$, so $0\le\dot\psi(0)(y)=h(y)+E(\bar u)\le h(y)$. Again
$\int_{C_0} h\,dm=0$ implies $\dot\psi(0)(y)=0$ for $m$--almost every $y$.
Pick a smooth function $w$ on a neighborhood of $y\in M$ such that
$i\ddb w=\omega$. The function $\Psi(s,x)=\psi(\re s)(x)$, $s\in \bar S_{01}$, $x\in M$, solves
\[
\text{rk}\, (\pi^*\omega+i\ddb\Psi)\equiv n\qquad\text{on } \bar S_{01}\times M,
\]
see section 2. Hence $\text{rk}\,\ddb(\pi^* w+\Psi)\equiv n$ in a neighborhood of $(0,y)\in \bar S_{01}\times M$. 
The leaf of the associated Monge--Amp\`ere foliation \cite{BK77} through $(0,y)$ contains the graph
of a smooth function 
$f:U\to M$, where $U\subset\bar S_{01}$ is a connected (relative) neighborhood of $0\in\bar S_{01}$, and $f$ is 
holomorphic on $U\cap S_{01}$. Thus $f(0)=y$ and 
$(\pi^*w+\Psi)\big(s,f(s)\big)$ is a smooth function of  $s\in U$, harmonic when $\re s>0$. As
$w+v_0$ is plurisubharmonic, $(w+v_0)\circ f$ is subharmonic in $U\cap S_{01}$. Also
\begin{equation} 
(w+v_0)\big(f(s)\big)\le w\big(f(s)\big)+\Psi\big(s,f(s)\big)=(\pi^*w+\Psi)\big(s,f(s)\big),
\end{equation}
with equality when $s=0$. But in fact, if $t=\re s$, the $\partial_t$--derivative of the two sides of (5.5) also agree 
when $s=0$, because $v_0\le\Psi(0,\cdot)$ with equality at $y$, hence $dv_0=d\Psi(0,\cdot)$ at $y$; and
$\partial_t|_{t=0}\Psi(t)(y)=\dot\psi(0)(y)=0$ for $m$--almost every $y$. To sum up, for $m$--almost every $y$
the left hand side of (5.5) minus the right hand side, a subharmonic function of $s$, attains its maximum $=0$
at $0$, and there its normal derivative is also $0$. The Hopf--Oleinik lemma, which in the case needed apparently
goes back to  Zaremba \cite{H52, O52, Z10}, then implies that the difference function is $\equiv 0$, and
equality holds in (5.5) for all $s\in U$.
In particular, $(w+v_0)\circ f$ is harmonic; since $i\ddb(w+v_0)>0$ near $y$, it follows that $f$ is constant, $f\equiv y$.
But then (5.5), with equality instead of inequality, gives $v_0(y)=\psi(t)(y)$ for $t\in U\cap\bR$, and
by analytic continuation, for all $t\in[0,1]$.

With the claim proved, we compute $0=\dot\psi(t)(y)=h(g_{t}y) $,
\begin{equation*}
0=\ddot\psi(t)(y)=\partial_\tau|_{\tau=0}h\big(g_\tau(g_ty)\big)=\iota\big(JX_M(g_ty)\big)dh=
2\iota\big(JX_M(g_ty)\big)\iota\big(X_M(g_ty)\big)\omega,
\end{equation*}
as $dh=2\iota(X_M)\omega$. Since $\omega>0$, this implies $X_M(g_{t}y)=0$.
In particular, $\mx=-h/2$ vanishes to second order at $g_{0}y=y$, i.e., $m(C_0\setminus N_X)=0$.
On the one hand, this means that $0=\int_{C_0}\mu\,dm=\int_{C_0\cap N_X}\mu\,dm$, and so $0$ is in the
convex hull of $N_X$; on the other, by (5.1) this cannot happen unless $X=0$. But then $h=0$ and $\bar u=\psi(1)=0$
follow.
\end{proof}

Putting together Propositions 3.1, 4.2, 4.3, and 5.1 we obtain Theorems 1.2, 1.3.

If in Theorems 1.2, 1.3, or in Proposition 5.1 condition (5.1) is dropped, 
the extremals may no longer be unique.
For example:
\begin{prop} 
Suppose that for some nonzero $X\in\fg$ the convex hull of $\{\mu(x):x\in N_X\}\subset\fg^*$ contains $0$.
Then there is a $v_0\in\cH$ such that the minimum of $E(v)$ among admissible $v\ge v_0$ is attained
at more than one $v$.
\end{prop}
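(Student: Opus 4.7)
The plan is to produce two distinct minimizers of $E$ via the geodesic generated by $X$. Fix a nonzero $X\in\fg$ with $0\in\text{conv}\,\mu(N_X)$, let $h=-2\langle\mu,X\rangle$, $g_t=\exp(itX)$, and $\psi(t)=\int_0^t g_\tau^*h\,d\tau$. By Proposition 3.2, $\psi:\bR\to\cK$ is a geodesic, so $E(\psi(t))=0$ and each $\psi(t)$ is admissible via $g_t$. I will choose $v_0\in\cH$ and a small $t_0\neq 0$ so that $u_1=0$ and $u_2=\psi(t_0)$ both dominate $v_0$ with contact sets containing $N_X$; Proposition 4.3 will then yield that both minimize $E(v)$ among admissible $v\ge v_0$. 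These two minimizers are distinct: if $\psi(t_0)\equiv 0$, then $g_{t_0}^*\omega=\omega$, and Proposition 2.11 would force $g_{t_0}\in G$, hence $X=0$, a contradiction.

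A key preliminary observation is that $\psi(t)$ vanishes identically on $N_X$: for $y\in N_X$, $X_M(y)=0$ gives $g_\tau y=y$, and $\langle\mu(y),X\rangle=0$ gives $h(g_\tau y)=h(y)=0$, so the integrand is zero. Since $X_M$ is Killing, its zero set is a smooth submanifold; since $d\langle\mu,X\rangle=-\iota(X_M)\omega$ vanishes there, $\langle\mu,X\rangle$ is locally constant on it, and $N_X$ is precisely the union of components on which this constant is zero — in particular, a smooth submanifold. I would then choose a smooth nonnegative $\rho_0\in C^\infty(M)$ that vanishes to order exactly two on $N_X$ and is positive elsewhere (a suitably smoothed squared distance from $N_X$). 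Because $h$ vanishes to second order on $N_X$ by definition, there is a constant $B$ with $|h|\le B\rho_0$; and because $g_\tau$ fixes $N_X$ smoothly in $\tau$, there is $D$ with $\rho_0\circ g_\tau\le D\rho_0$ for $|\tau|\le 1$. Combining these gives
\[
|\psi(t)(x)|\;\le\;C|t|\,\rho_0(x), \qquad x\in M,\ |t|\le 1,
\]
for some constant $C$.

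Fix $\epsilon_0>0$ with $\omega-\epsilon\,i\ddb\rho_0>0$ for $\epsilon\in[0,\epsilon_0]$, choose $t_0\in(0,1)$ with $Ct_0\le\epsilon_0$, and set $v_0=-Ct_0\,\rho_0\in\cH$. By the estimate above, $v_0\le 0$ and $v_0\le\psi(t_0)$ pointwise, with equality on $N_X$ in both cases. Hence $u_1=0$ (admissible via $\gamma_1=e$) and $u_2=\psi(t_0)$ (admissible via $\gamma_2=g_{t_0}$) are admissible potentials $\ge v_0$ whose contact sets contain $N_X$. Since $g_t$ fixes $N_X$ pointwise, $\mu(\gamma_j N_X)=\mu(N_X)$ has $0$ in its convex hull, and Proposition 4.3 produces the two desired minimizers. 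The main obstacle is the simultaneous satisfaction of $v_0\in\cH$ (which forces $\epsilon$ small) and $v_0\le\psi(t_0)$ (which forces $\epsilon$ large enough to dominate $|\psi(t_0)|$ near $N_X$, where $\psi(t_0)$ can be negative); the second-order vanishing of $h$ on $N_X$ — built into the definition of $N_X$ — is precisely what makes both conditions compatible once $t_0$ is taken small.
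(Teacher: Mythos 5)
Your proof is correct and follows essentially the same route as the paper: the geodesic $\psi(t)=\int_0^t g_\tau^*h\,d\tau$ with $E(\psi(t))\equiv 0$, the quadratic estimate $|\psi(t)|=O(|t|\,\dist(\cdot,N_X)^2)$ coming from the second-order vanishing of $h$ on $N_X$, the barrier $v_0$ built as a small negative multiple of a smoothed squared distance to $N_X$, and Proposition 4.3 to certify the minimizers. Your explicit verification that $\psi(t_0)\neq 0$ (via Propositions 2.11 and Theorem 2.9) is a point the paper leaves implicit, and is a welcome addition.
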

\begin{proof}
Write $\dist$ for distance on $M$ measured in, say, the K\"ahler metric of $\omega$.
Let $g_t=\exp itX$, $h=-2\mx$, and define a geodesic $\varphi:\bR\to\cH$ by 
$\varphi(t)=\int_0^tg_\tau^*h\,d\tau$ (Proposition 3.2). Thus 
$g_t|N_X=\id_{N_X}$ and $\varphi(t)|N_X=0$. First observe that 
for $t\in[-1,1]$ the functions $g_t^*h$ vanish to
second order on $N_X$, and have uniformly bounded Hessians on $M$.
Hence 
\[
|\dot\varphi(t)(z)|=|(g_t^*h)(z)|=O\big(\dist(z,N_X)^2\big),\qquad t\in[-1,1], \quad z\in M.
\]
Since
$\varphi(0)=0$, integration gives 
\begin{equation} 
|\varphi(t)(z)|=O\big(|t|\dist(z, N_X)^2\big),\qquad t\in[-1,1],\quad z\in M.
\end{equation}

According to Atiyah, Guillemin, and Sternberg \cite[Lemma 2.2]{A82}, \cite[Theorem 5.3]{GS84}, $N_X$,
the union of certain components of the critical set of $h$, is a smooth submanifold of $M$. (The weaker
result \cite[Theorem 5.1]{K72} would also do.) Choose  a smooth
$v_0:M\to(-\infty,0]$ whose zero set is $N_X$, and whose Hessian in directions normal to $N_X$ is
negative definite. If needed, we can replace $v_0$  by $\var v_0$ to arrange that $v_0\in\cH$. Since $v_0$
is negative away from $N_X$, the Hessian condition implies
$v_0(z)\le c\,\dist(z,N_X)^2$ for $z\in M$, with some $c<0$. Hence (5.6) yields $\varphi(t)\ge v_0$
for small $t$. Each $\varphi(t)$ is admissible by Proposition 3.2.

But $0=\varphi(0)$ minimizes energy $E(v)$ among admissible $v\ge v_0$, because by assumption
the convex hull of $\{\mu(x):x\in N_X\}=\{\mu(x):0=v_0(x)\}$ contains $0$, cf. Proposition 4.3. As
$E\big(\varphi(t)\big)\equiv 0$ by Proposition 3.2, for small $t$ the potentials $\varphi(t)$ also minimize.
\end{proof}


\section{Stability of extremals, extrema} 

We continue in the setup of section 5. In view of Proposition 5.1, with any $v_0\in C(M)$ one can associate 
the unique admissible $u_-=U_-(v_0)$ that
maximizes $E(v)$ among admissible $v\le v_0$. Similarly, if we denote by $C_\text{unique}\subset C(M)$ the
set of $v_0$  such that $E$ has a unique minimizer among admissible $v\ge v_0$, with each $v_0\in C_\text{unique}$
we can associate this unique minimizer $u=U(v_0)$. For example, if the assumptions of Proposition 5.1 are
met, $\cH\subset C_\text{unique}$.
First we record a stability property of $u_-,u$; then of the minimum value in (1.3). 
\begin{prop} 
The maps 
\[
U_-:C(M)\to\cH\qquad\text{and}\qquad U:C_\text{unique}\to\cH
\]
are continuous, when their domains are endowed with the topology of uniform convergence and $\cH\subset C^\infty(M)$
with its natural Fr\'echet topology.
\end{prop}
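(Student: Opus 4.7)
The plan is to reduce convergence in $\cH$ to convergence of the group elements that parametrize admissible potentials, and then invoke the compactness result of Proposition 3.3 together with uniqueness of extremals.

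First, recall from the proof of Proposition 3.1 the continuous ``normalizing'' map $U:G^\bC\to C^\infty(M)$ with $g^*\omega-\omega=i\ddb U(g)$ and $\int_M U(g)\,\omega^n=0$. Every admissible $u\in\cH$ has the form $u=U(g)+c$ for some $g\in G^\bC$ and $c\in\bR$. Thus it suffices, given $v_0^{(k)}\to v_0$ uniformly, to show that the associated group elements $g_k$ (with $\omega_{u_k}=g_k^*\omega$) can be chosen so that $g_k$ stays in a fixed compact subset of $G^\bC$, and that every $G^\bC$-accumulation point $g$ produces the desired extremal $u=U(g)+c$.

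For the boundedness, observe that if $\|v_0^{(k)}-v_0\|_\infty\to 0$, then $\min_M v_0^{(k)}$ and $\max_M v_0^{(k)}$ are uniformly bounded, and so are the extremal energies $E(u_k)$ (for instance, by comparing with a fixed admissible competitor $\bar u\le\inf_k v_0^{(k)}$ in the max problem, and symmetrically in the min problem). Consequently, in the maximizer case $u_k\le v_0^{(k)}$ gives $\max_M u_k\le b$ and $E(u_k)\ge a$; so $g_k$ lies in a set of the form (3.1), compact by Proposition 3.3. In the minimizer case $u_k\ge v_0^{(k)}$ and $E(u_k)\le b'$ similarly puts $g_k$ in the compact set (3.2). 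Hence we can extract $g_{k_j}\to g\in G^\bC$, and then $U(g_{k_j})\to U(g)$ in $C^\infty(M)$. Adjusting by the additive constants $c_{k_j}=u_{k_j}-U(g_{k_j})$, which remain bounded, and passing to a further subsequence, we obtain $u_{k_j}\to u:=U(g)+c$ in $C^\infty(M)$.

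It remains to identify the limit $u$ with the extremal for $v_0$. Passing to the limit in $u_{k_j}\le v_0^{(k_j)}$ (resp.\ $\ge$) gives $u\le v_0$ (resp.\ $u\ge v_0$). For the extremality, take any admissible competitor $\bar u$ for $v_0$. In the max case $\bar u\le v_0$, so $\bar u-\epsilon_k\le v_0^{(k)}$ with $\epsilon_k=\|v_0-v_0^{(k)}\|_\infty\to 0$; since $\bar u-\epsilon_k$ is admissible and $E(\bar u-\epsilon_k)=E(\bar u)-\epsilon_k$, maximality of $u_k$ yields $E(\bar u)-\epsilon_k\le E(u_k)$, and letting $k=k_j\to\infty$ we get $E(\bar u)\le E(u)$. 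A symmetric argument works in the min case, with $\bar u+\epsilon_k\ge v_0^{(k)}$. Hence $u$ is a maximizer (resp.\ minimizer) for $v_0$. Uniqueness (Proposition 5.1 in the max case; the assumption $v_0\in C_{\text{unique}}$ in the min case) forces $u=U_-(v_0)$ (resp.\ $U(v_0)$), and because this limit is the same for every subsequence, the full sequence $u_k$ converges in $C^\infty(M)$.

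The main obstacle is the last step: establishing that a $C^\infty$ limit of extremals for $v_0^{(k)}$ is still extremal for $v_0$, and particularly that we may pass through the admissibility constraint. The maneuver of perturbing the competitor $\bar u$ by $\pm\epsilon_k$ circumvents this, exploiting that adding a constant preserves admissibility and shifts $E$ by that constant; this is what couples uniform control of $v_0^{(k)}-v_0$ to the variational inequality. Everything else is a routine combination of the continuity of $U:G^\bC\to C^\infty(M)$, the compactness in Proposition 3.3, and uniqueness.
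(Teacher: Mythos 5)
Your proposal is correct and follows essentially the same route as the paper: uniform control of $v_0^{(k)}$ bounds the extremals into the compact sets of Proposition 3.3, a convergent subsequence of group elements yields a $C^\infty$ limit, and uniqueness of the extremal identifies that limit, forcing full-sequence convergence. The only cosmetic difference is that the paper deploys the $\pm\delta$-shift trick once at the start to prove $E\circ U$ is $1$-Lipschitz and then matches energies, whereas you apply the same shift to arbitrary competitors to verify extremality of the limit directly; the two are interchangeable.
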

\begin{proof}
Since the proofs for $U_-$ and $U$ are almost identical, we will write out the proof only for $U$. Suppose 
$v_0,v_1\in C_\text{unique}$, and
$\max_M|v_0-v_1|=\delta$. Thus $U(v_1)\ge v_1\ge v_0-\delta$, whence the admissible potential  
$U(v_1)+\delta$ is $\ge v_0$. Therefore 
\[
E\big(U(v_1)\big)+\delta=E\big(U(v_1)+\delta\big)\ge E\big(U(v_0)\big).
\]
Exchanging the roles of $v_0,v_1$, we obtain $\big|E\big(U(v_0)\big)-E\big(U(v_1)\big)\big|\le\delta$, and so 
$E\circ U$ is continuous.

Now take a sequence of $v_j\in C_\text{unique}$, $j=1,2,\dots$, converging uniformly to $v_0\in C_\text{unique}$. Thus 
$\omega_{U(v_j)}=g_j^*\omega$ with some $g_j\in G^\bC$, $j=0,1,\dots$. To prove $\lim_jU(v_j)= U(v_0)$ 
we first assume $g_j\to g_0$. By Theorem 2.9 $g_j=\gamma_j\exp iX_j$ with $\gamma_j\in G$, $X_j\in\fg$, and
$X_j\to X_0$. Since $(\exp iX_j)^*\omega=\omega_{U(v_j)}$, by Proposition 3.2
\[
U(v_j)=-2\int_0^1(\exp i\tau X_j)^*\langle \mu, X_j\rangle\,d\tau+E\big(U(v_j)\big).
\]
This indeed shows that $U(v_j)\to U(v_0)$.

Without assuming $g_j\to g_0$ we can argue like this. Suppose $U(v_j)\not\to U(v_0)$. Thus $U(v_0)$ 
has a neighborhood
$\cV\subset\cH$ that infinitely many $U(v_j)$ avoid. By compactness, Proposition 3.3, at the price of passing to a 
subsequence, we can arrange that the corresponding $g_j$ converge. By what we have already proved, this implies
that the $U(v_j)$ converge to some admissible $u\neq U(v_0)$. However, $u\ge\lim_j v_j= v_0$ and
$E(u)=\lim_j E\big(U(v_j)\big)=E\big(U(v_0)\big)$; this contradicts uniqueness of the minimizer.
\end{proof} 

In section 9 we will need a coarser stability property of the minimum value itself in (1.3). For any upper bounded
$v_0:M\to[-\infty,\infty)$ let
\[
I(v_0)=\inf\{E(v):v\in\cH \text{ is admissible, }v\ge v_0 \}\in[-\infty,\infty).
\]
\begin{prop} 
If a sequence of upper semicontinuous $v_k:M\to[-\infty,\infty)$, $k\in\bN$, decreases to $v_0$, then
$I(v_k)$ decrease to $I(v_0)$.
\end{prop}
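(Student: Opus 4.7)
Monotonicity is immediate: since $v_0 \le v_k$ and $v_{k+1} \le v_k$, the admissibility-and-barrier sets shrink as $k$ grows, so the infima $I(v_k)$ form a nonincreasing sequence with $I(v_k) \ge I(v_0)$ for every $k$. Set $L=\lim_k I(v_k)$; the whole point is to show $L \le I(v_0)$.

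The strategy is to approximate any near-minimizer for $v_0$ by a candidate for $v_k$ via a shift by a small constant. Fix $\varepsilon>0$ (or, if $I(v_0)=-\infty$, fix an arbitrary negative number $-N$) and choose an admissible $w\in\cH$ with $w\ge v_0$ and $E(w)\le I(v_0)+\varepsilon$. Put
\[
\alpha_k=\max_M(v_k-w), \qquad c_k=\max(\alpha_k,0).
\]
Because $w$ is smooth and $v_k$ is upper semicontinuous on the compactum $M$, $v_k-w$ attains its supremum, so $\alpha_k\in[-\infty,\infty)$ is well defined; and $w+c_k\ge v_k$ by construction. The function $w+c_k\in\cH$ is admissible (translation by a constant leaves the set of admissible potentials invariant since $\omega_{w+c_k}=\omega_w$), so
\[
I(v_k)\le E(w+c_k)=E(w)+c_k\le I(v_0)+\varepsilon+c_k.
\]

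It remains to check that $c_k\to 0$. For this I would invoke the following Dini-type fact for upper semicontinuous functions on a compactum: if usc functions $f_k$ decrease pointwise to an usc $f$ on compact $M$, then $\max_M f_k\downarrow \max_M f$. The proof is standard: pick maximizers $x_k$ of $f_k$, extract a convergent subsequence $x_k\to x_*$, then for any fixed $k_0$ the upper semicontinuity of $f_{k_0}$ gives $\limsup_k f_k(x_k)\le \limsup_k f_{k_0}(x_k)\le f_{k_0}(x_*)$, and letting $k_0\to\infty$ gives $\lim_k\max f_k\le f(x_*)\le \max f$. Applying this to $f_k=v_k-w$ (which is usc, decreasing, with pointwise limit $v_0-w$, itself usc as a decreasing limit of usc functions), one obtains $\alpha_k\downarrow \max_M(v_0-w)\le 0$, whence $c_k\downarrow 0$.

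Combining, $\limsup_k I(v_k)\le I(v_0)+\varepsilon$, and letting $\varepsilon\to 0$ (or $N\to\infty$ in the $I(v_0)=-\infty$ case) yields $L\le I(v_0)$. The main subtlety, and really the only one beyond bookkeeping, is the passage from pointwise decrease of usc functions to convergence of their maxima on $M$; the upper semicontinuity of the limit and the use of compactness in the extraction of $x_*$ are essential, and would not survive were the hypothesis weakened to mere pointwise convergence.
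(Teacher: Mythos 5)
Your proof is correct and follows essentially the same route as the paper: monotonicity is immediate, and the key step is a Dini-type compactness argument showing that a near-minimizer for $v_0$, shifted up by a constant tending to $0$, dominates $v_k$ for large $k$. The only cosmetic difference is that the paper reduces to the classical Dini theorem by considering the continuous functions $\max(v_k,u)\downarrow u$, whereas you prove the upper semicontinuous version of the max-convergence directly; both are valid.
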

\begin{proof}
The definition implies that $I(v_k)\ge I(v_0)$, $k\in\bN$, form a decreasing sequence; call its limit $L\ge I(v_0)$.
Suppose $u\ge v_0$ is admissible. Then the upper semicontinuous functions $\max(v_k,u)$ decrease to $u$, 
perforce uniformly by Dini's theorem. Thus, if $\var>0$ there is a $k$ such that $v_k\le u+\var$, whence
$L\le I(v_k)\le E(u+\var)=E(u)+\var$. 
This being true for all $\var>0$ and for all admissible $u\ge v_0$, $L\le I(v_0)$ follows, and the proof is complete.
\end{proof}

\section{Polarized manifolds} 

With $(M,\omega)$, $G$, and $G^\bC$ as before we now consider a holomorphic line bundle $L\to M$ with a smooth
hermitian metric $h^0$. The curvature $\Theta(h^0)$ is a closed form, and we assume $\omega=i\Theta(h^0)$. We also assume
that the action of each $g\in G^\bC$ on $M$ can be lifted to $\tilde g:L\to L$; thus $\tilde g$ is holomorphic and maps
each $L_x$ linearly onto $L_{gx}$. This section is about a reformulation of the variational problems (1.2), (1.3) in this
setting. The material, when $L$ is the hyperplane section bundle over projective space, will be needed in sections 8, 10.

Any two lifts $L\to L$ of $g\in G^\bC$ are multiples of one another by a holomorphic function on $M$, hence by a nonzero
constant. All lifts
of all $g\in G^\bC$ form a group $H^\bC$, an extension of $G^\bC$ by $\bC^\times$, the multiplicative
group of nonzero complex numbers. While $H^\bC$ is the complexification of an extension $H$ of $G$ by $S^1$, we will not need
this fact. We can let $H^\bC$ act on $M$ as well, via the projection $H^\bC\to G^\bC$.

For each $g\in H^\bC$ there is a $W(g)\in C^\infty(M)$ such that
\begin{equation} 
g^*h^0=e^{-W(g)}h^0.
\end{equation}
(In this formula, as usual, $W(g)$ is viewed as a function on $L$, constant on the fibers.) Thus
\[
0<i\Theta(g^*h^0)=i\Theta(h^0)+i\ddb W(g)=\omega_{W(g)},
\]
$W(g)\in\cH$, and $W$ is a smooth map $H^\bC\to\cH$. Since $i\Theta(g^*h^0)=ig^*\Theta(h^0)=g^*\omega$, it follows 
that $W(g)$ is admissible in the sense of Definition 1.1. The converse is also true:
\begin{prop} 
$v\in\cH$ is admissible if and only if it is of form $W(g)$, where $g\in H^\bC$.
\end{prop}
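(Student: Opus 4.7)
The forward direction is already contained in the paragraph preceding the statement: for any $g\in H^\bC$ we have shown $\omega_{W(g)}=i\Theta(g^*h^0)=g^*\omega$, so $W(g)$ is admissible.

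For the converse, suppose $v\in\cH$ is admissible, so $\omega_v=g^*\omega$ for some $g\in G^\bC$. The plan is to pick any lift of $g$ to $H^\bC$, compute the discrepancy between $v$ and the resulting $W$, and then absorb this discrepancy by rescaling the lift. Concretely, choose any $\tilde g\in H^\bC$ projecting to $g$. Then $W(\tilde g)\in\cH$ satisfies $\omega_{W(\tilde g)}=g^*\omega=\omega_v$, so $i\ddb\bigl(v-W(\tilde g)\bigr)=0$; since $M$ is compact and connected, $v-W(\tilde g)=c$ is a real constant.

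The remaining step is to find a scalar $\lambda\in\bC^\times$ such that the rescaled lift $\lambda\tilde g\in H^\bC$ (acting on $L$ by $\ell\mapsto\lambda\,\tilde g(\ell)$ on each fibre) satisfies $W(\lambda\tilde g)=W(\tilde g)+c$. A direct computation from (7.1) gives
\[
(\lambda\tilde g)^*h^0=|\lambda|^2\tilde g^*h^0=e^{-W(\tilde g)+\log|\lambda|^2}h^0,
\]
so $W(\lambda\tilde g)=W(\tilde g)-2\log|\lambda|$. Choosing $|\lambda|=e^{-c/2}$ (e.g. $\lambda=e^{-c/2}\in\bR^\times\subset\bC^\times$) yields $W(\lambda\tilde g)=W(\tilde g)+c=v$, as required.

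I do not anticipate a genuine obstacle here: the argument is essentially bookkeeping about the $\bC^\times$-extension $1\to\bC^\times\to H^\bC\to G^\bC\to 1$ mentioned just before the proposition. The only point that needs a moment's thought is verifying the transformation law $W(\lambda\tilde g)=W(\tilde g)-2\log|\lambda|$ (which uses that $h^0$ is hermitian, hence homogeneous of degree $2$ in the fibre variable), together with the standard fact that a real pluriharmonic function on a connected compact K\"ahler manifold is constant, which allows us to identify $v-W(\tilde g)$ as a scalar to be absorbed.
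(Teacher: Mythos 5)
Your proposal is correct and follows essentially the same route as the paper: lift $g$ to $H^\bC$, observe that $v-W(\tilde g)$ is pluriharmonic hence constant on the compact connected $M$, and absorb the constant by rescaling the lift by a suitable $\lambda\in\bC^\times$ using the transformation law $W(\lambda\tilde g)=W(\tilde g)-\log|\lambda|^2$. The sign bookkeeping checks out.
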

\begin{proof}
What remains to prove is the ``only if'' statement. Note that if $\lambda\in \bC^\times\subset H^\bC$ then
\[
e^{-W(\lambda g)}h^0=(\lambda g)^*h^0=g^*\lambda^*h^0=|\lambda|^2g^*h^0=|\lambda|^2e^{-W(g)}h^0.
\]
Hence $W(\lambda g)=W(g)-\log|\lambda|^2$. Suppose that $v$ is admissible, $\omega_v=g_0^*\omega$
with some $g_0\in G^\bC$. Choose $g_1\in H^\bC$ that projects to $g_0$. By (7.1)
\[
\omega+i\ddb v=g_1^*\omega=i\Theta(g_1^*h^0)=i\Theta(e^{-W(g_1)}h^0)=\omega+i\ddb W(g_1),
\]
whence $W(g_1)-v=c$ is constant. It follows that $W(e^{c/2}g_1)=W(g_1)-c=v$.
\end{proof}

\begin{defn} 
We say that hermitian metrics $h,h^0$ on $L$ are similar, or are in the same similarity class, if $h=g^*h^0$ with some 
$g\in H^\bC$. In this case we put $\cE(h^0,h)=E\big(W(g)\big)=E\big(\log(h_0/h)\big)$.
\end{defn}
An arbitrary metric $\bar h$ on $L$ is of form $e^{-\bar v}h^0$, with $\bar v$ a function on $M$. By our discussion: 
\begin{prop} 
The extrema in (1.2), (1.3) can be represented as
\begin{equation}
\begin{aligned} 
\max\{E(v):v\le \bar v\text{ is admissible}\}=& \max\{E\big(W(g)\big): g\in H^\bC, W(g)\le \bar v\} \\
=&\max\{\cE(h^0,h): h\ge\bar h\text{ is similar to }h^0\}
\end{aligned}
\end{equation}
and
\begin{equation}
\begin{aligned} 
\min\{E(v):v\ge \bar v\text{ is admissible}\}=& \min\{E\big(W(g)\big): g\in H^\bC, W(g)\ge \bar v\} \\
=&\min\{\cE(h^0,h): h\le\bar h\text{ is similar to } h^0\}.
\end{aligned}
\end{equation}
Further, $v$ is extremal in (7.2), resp. (7.3), if and only if $v=W(g)$ with an extremal $g$.
\end{prop}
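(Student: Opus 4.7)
The plan is to verify the two chains of equalities in (7.2) and (7.3) by tracing the dictionary between admissible potentials $v\in\cH$ and similar hermitian metrics $h$ on $L$, and then to read off the characterization of the extremals from this correspondence.

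First I would establish the leftmost equalities in (7.2) and (7.3). By Proposition 7.2, the map $W:H^\bC\to\cH$ has image exactly the admissible potentials, so $\{v\in\cH:v\text{ admissible}\}=\{W(g):g\in H^\bC\}$. The constraints $v\le\bar v$ (respectively $v\ge\bar v$) translate directly to $W(g)\le\bar v$ (respectively $W(g)\ge\bar v$), and since $E(v)=E(W(g))$ for $v=W(g)$, the first equality in each of (7.2), (7.3) is immediate.

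Next I would handle the rightmost equalities, which require translating between functions and metrics. Write $\bar h=e^{-\bar v}h^0$, and for $g\in H^\bC$ set $h=g^*h^0=e^{-W(g)}h^0$ by (7.1). Then $h\ge\bar h$ is equivalent to $e^{-W(g)}\ge e^{-\bar v}$, i.e.\ $W(g)\le\bar v$; conversely $h\le\bar h$ is equivalent to $W(g)\ge\bar v$. This explains the flip of inequalities between (1.2)--(1.3) on the one hand, and the metric formulation on the other. Combined with the definition $\cE(h^0,h)=E(W(g))$ from Definition 7.3, this gives a bijection between $\{g\in H^\bC:W(g)\le\bar v\}$ and $\{h\text{ similar to }h^0:h\ge\bar h\}$ (up to the ambiguity in choosing a lift $g$ of a given element of $G^\bC$, which only shifts $W(g)$ by a constant; but any two lifts giving the same metric $h$ give the same $W(g)$, so this is in fact a bijection at the level of metrics). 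The rightmost equalities in (7.2) and (7.3) follow.

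Finally, for the ``if and only if'' statement: an extremal $v$ in (1.2) or (1.3) is admissible, so by Proposition 7.2 it equals $W(g)$ for some $g\in H^\bC$, and this $g$ is extremal in the middle expression of (7.2) or (7.3) because $E(v)=E(W(g))$ and the constraints correspond. Conversely, if $g$ is extremal in the middle expression, then $W(g)$ achieves the corresponding extremum in (1.2) or (1.3). No step here is substantial; the only mild subtlety is keeping track of the sign flip in the inequality when passing between potentials and metrics, which I expect to be the main point worth stating cleanly rather than a genuine obstacle.
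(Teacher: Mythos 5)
Your proposal is correct and matches the paper's intent: the paper states this proposition without a separate proof, as an immediate consequence of the preceding discussion (the characterization of admissible potentials as $W(g)$, the identity $g^*h^0=e^{-W(g)}h^0$, and the resulting inequality flip $h\ge\bar h\Leftrightarrow W(g)\le\bar v$), which is exactly what you spell out. The only nitpick is a numbering slip: the admissibility characterization is Proposition 7.1 and the definition of $\cE$ is Definition 7.2, not 7.2 and 7.3 as you cite them.
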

If $h^1$ is similar to $h^0$, then $\cE(h^1,\cdot)=\cE(h^0,\cdot)+\text{const}$, cf. (4.2). It follows
that the extremal metrics $h$ in (7.2), (7.3) stay the same if $h^0$ is changed to $h^1$.
\begin{defn} 
Extremal hermitian metrics in (7.2), (7.3) will be called minimal, respectively, maximal metrics in the
similarity class of $h^0$.
\end{defn}
If $\bar v$ is bounded, these minimal and maximal hermitian metrics exist according to Proposition 3.1, and if 
the assumptions of Proposition 5.1 are satisfied (with $v_0=\bar v)$, then they are unique. For example,
if $L$ is the hyperplane section bundle $\cO_{\bP_n}(1)$, and $h^0, \bar h$ are smooth and positively curved, 
then by the discussion in section 5 the extremals exist and are unique.
\begin{lem} 
$E\circ W:H^\bC\to\bR$ is a homomorphism to the additive group of real numbers.
\end{lem}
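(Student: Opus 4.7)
The plan is to prove the homomorphism property $E(W(g_1 g_2)) = E(W(g_1)) + E(W(g_2))$ for $g_1, g_2 \in H^\bC$ by first establishing the cocycle identity satisfied by $W$ and then using a path integration argument that absorbs the $g_2$-translation via a change of variables.

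First I would derive the cocycle identity $W(g_1 g_2) = W(g_2) + W(g_1) \circ g_2$. From (7.1),
\[
(g_1 g_2)^* h^0 = g_2^*\bigl(e^{-W(g_1)} h^0\bigr) = e^{-W(g_1)\circ g_2}\, g_2^* h^0 = e^{-W(g_1)\circ g_2 - W(g_2)}\, h^0,
\]
and comparing with $(g_1 g_2)^* h^0 = e^{-W(g_1 g_2)} h^0$ yields the claim. Taking $g_1 = g_2 = e$ forces $W(e) = 0$, whence $E(W(e)) = 0$ by (2.4).

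Next, since $G^\bC$ is connected by Theorem 2.9 and $H^\bC$ is a $\bC^\times$-extension of $G^\bC$, the group $H^\bC$ is itself connected, so I can choose a smooth path $g:[0,1]\to H^\bC$ with $g(0)=e$ and $g(1)=g_1$. Set
\[
\varphi(t) = W(g(t)), \qquad \psi(t) = W(g(t) g_2),
\]
both smooth paths in $\cH$. The cocycle identity gives $\psi(t) = W(g_2) + \varphi(t)\circ g_2$, and hence $\dot\psi(t) = \dot\varphi(t)\circ g_2$. Moreover $\omega_{\psi(t)} = (g(t)g_2)^*\omega = g_2^*\omega_{\varphi(t)}$, so $\omega_{\psi(t)}^n = g_2^*\omega_{\varphi(t)}^n$.

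Finally I would integrate $dE$ along the two paths. Using the formula for $\alpha$ and the fact that the holomorphic diffeomorphism $g_2$ of $M$ preserves integrals of top degree forms,
\begin{align*}
E(W(g_1 g_2)) - E(W(g_2))
&= \int_0^1 \frac{1}{\vo} \int_M \dot\psi(t)\, \omega_{\psi(t)}^n \,dt \\
&= \int_0^1 \frac{1}{\vo} \int_M g_2^*\bigl(\dot\varphi(t)\, \omega_{\varphi(t)}^n\bigr)\,dt \\
&= \int_0^1 \frac{1}{\vo} \int_M \dot\varphi(t)\, \omega_{\varphi(t)}^n \,dt \\
&= E(W(g_1)) - E(W(e)) = E(W(g_1)),
\end{align*}
which is the desired homomorphism property. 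The only nontrivial ingredient is the change of variables in the third equality, which rests on nothing more than the diffeomorphism invariance of $\int_M$; I do not foresee any serious obstacle.
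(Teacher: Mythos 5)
Your proof is correct and follows essentially the same route as the paper: derive the cocycle identity $W(g_1g_2)=W(g_2)+g_2^*W(g_1)$ from (7.1), connect $e$ to $g_1$ by a path in the connected group $H^\bC$, and compare the integrals of $dE$ along $W(g(\cdot))$ and $W(g(\cdot)g_2)$ using the diffeomorphism invariance of $\int_M$. The only cosmetic difference is that the paper equates the $t$-derivatives and concludes the difference is constant, while you integrate both sides directly; these are the same argument.
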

\begin{proof}
If $g_1,g_2\in H^\bC$, then
$e^{-W(g_1g_2)}h^0=g_2^*g_1^*h^0=e^{-g_2^*W(g_1)}g_2^*h^0=e^{-W(g_2)-g_2^*W(g_1)}h^0$, or
\[
W(g_1g_2)=W(g_2)+g_2^*W(g_1). 
\]
Now consider $g\in H^\bC$ and a smooth path $\gamma:[0,1]\to H^\bC$, starting at the neutral element. Let
$\varphi(t)=W\big(\gamma(t)\big)$ and $\psi(t)=W\big(\gamma(t)g\big)=W(g)+g^*W\big(\gamma(t)\big)$.
Thus $\dot\psi(t)=g^*\dot\varphi(t)$ as functions on $M$. We compute
\begin{multline*}
\dfrac d{dt}E\big(W(\gamma(t)g)\big)=
dE\big(\dot\psi(t)\big)=\dfrac 1\vo \int_M\dot\psi(t)\omega_{\psi(t)}^n \\ 
=\dfrac 1\vo \int_M\big(g^*\dot\varphi(t)\big)\big(\gamma(t)g\big)^*\omega^n=
\dfrac 1\vo\int_M \dot\varphi(t)\gamma(t)^*\omega^n=
\dfrac d{dt} E\big(W(\gamma(t))\big).
\end{multline*}
Hence $E\big(W(\gamma(t)g)\big)-E\big(W(\gamma(t))\big)$ is constant. Setting $t=0$ we find
\[
E\big(W(\gamma(t)g)\big)=E\big(W(\gamma(t))\big)+E\big(W(g)\big).
\]
Since $H^\bC$ is connected, this shows that $E\circ W$ is indeed a homomorphism.
\end{proof}

\section{Hermitian metrics on vector bundles} 

In the Introduction we suggested that one of the motivations
for the variational problems (1.2), (1.3) had to do with hermitian metrics on vector bundles. This comes from a
fundamental construction in geometry that associates with a, say, holomorphic vector bundle 
$F\to B$ a holomorphic line bundle $\one\to\bP F$. Here we use the convention that
$\bP F=\bigcup_{b\in B}\bP F_b$ is the set of one
codimensional subspaces $\Sigma$ in various fibers of $F$, on which the local trivializations of $F$ induce the
structure of a complex manifold fibered over $B$; and $\one$ is a holomorphic line bundle whose fiber over 
$\Sigma\subset F_b$ is $F_b/\Sigma$. Thus $\one$ restricts to $\bP F_b$ as the hyperplane section bundle.
In this section we will write
\[
\pi:\bP F\to B
\]
for the fibering. 

A hermitian metric on a vector space $V$ induces a hermitian metric on any quotient of $V$. Applying this construction
fiberwise, we can associate to a hermitian metric $h_F$ on the vector bundle $F$ a hermitian metric $h_\bP$ on the
line bundle $\one$. We will refer to $h_\bP$ as induced by $h_F$. Note that the restrictions $h_\bP|\onb$ for $b\in B$
are of Fubini--Study type, hence positively curved.
The question is whether there is a natural converse
construction, that associates a hermitian metric on $F$ to a hermitian metric $k$ on $\one$. We will be interested
only in $k$ whose restrictions to $\onb$ are positively curved.

One such construction is as follows. All hermitian metrics on $F$ are induced from one another: If $h^0_F,h_F$ are such, and
$b\in B$, then $h_F|F_b=g^*h^0_F|F_b$ with some $g\in\GL(F_b)$. The corresponding metrics $h^0_\bP, h_\bP$
on $\cO_{\bP F_b}(1)$ are related by the induced action of $g$ on $\cO_{\bP F_b}(1)$. Indeed, if
$\Sigma\subset F_b$ is a hyperplane, the fiber of $\one$ over $\Sigma$, i.e., $F_b/\Sigma$, consists of translates 
$\Sigma'\subset F_b$ of $\Sigma$. Since
\[
h_\bP(\Sigma')=\min_{\zeta\in\Sigma'} h_F(\zeta)=\min_{\zeta\in\Sigma'} h^0_F(g\zeta)=
\min_{\zeta\in g\Sigma'} h^0_F(\zeta)=h^0_\bP(g\Sigma'),
\]
we have
\begin{equation} 
h_\bP|\onb=g^*h^0_\bP|\onb,\qquad g\in\GL(F_b).
\end{equation}

We are in the situation of section 7, with $L=\onb\to M=\bP F_b$, $G=\SU(h^0_F|F_b)$ and $G^\bC=\SL(F_b)$ acting 
on $\bP F_b$, $H^\bC=\GL(F_b)$ acting on $\onb$. (8.1) shows that $h_\bP|\onb$ 
and $h^0_\bP|\onb$ are similar in the sense of Definition 7.2, and clearly the converse is also 
true: Given a metric $h$ on $\one$, if for all $b\in B$ its restrictions to $\onb$ are similar to the restrictions of $h^0_\bP$, 
then it is induced from a hermitian metric $h_F$ on $F$. Moreover, $h_F$ is uniquely determined by $h=h_\bP$: If
$\zeta\in F_b$ and $\Sigma\subset F_b$ is a hyperplane, write $\Sigma ^\zeta\subset F_b$ for the equivalence class of 
$\zeta$ mod $\Sigma$. Then $h_\bP(\Sigma^\zeta)\le h_F(\zeta)$ for all $\Sigma\in\bP F_b$; but the two are equal if
$\Sigma$ is perpendicular (measured in $h_F$) to $\zeta$. Therefore
\[
h_F(\zeta)=\max \{h_\bP(\Sigma^\zeta):\Sigma\in\bP F_b\}.
\]

Hence, as established in section 7, given a smooth hermitian metric $k$ on $\one$, whose 
restrictions to $\onb$ are positively curved, in the
similarity class of induced metrics on each $\onb$ we can find the unique maximal metric $k_b$ among metrics  
$\le k|\onb$, cf. (7.3), Definition 7.4. The collection of $k_b$ defines a metric on
$\one$, which is induced by a hermitian metric on $F$. We associate this hermitian metric on $F$ with $k$, and denote it
$k^F$. Theorem 6.1 implies that $k^F$ is continuous. However, there is no reason why it should be smooth.
To obtain out of a smooth $k$ a smooth metric on $F$, one may have to regularize $k^F$, for which there are
efficient techniques.

This association $k\mapsto k^F$ is based on the variational problems (7.3). It would be equally possible
to construct hermitian metrics on $F$ based on the variational problems (7.2), but those metrics 
seem to be of less interest.\footnote{Naumann in \cite{N21} proposes yet another way to associate a hermitian
metric on $F$ with a hermitian metric on $\one$, using a K\"ahler--Ricci flow on the fibers $\bP F_b$. 
It is not clear if the two constructs are related in some way.}

One situation when a smooth metric on $F$ is desired occurs in connection with Griffiths' conjecture on the
equivalence between amplitude of a holomorphic vector bundle $F\to B$ and the
existence of a hermitian metric on $F$ whose curvature is Griffiths positive ($B$ is compact). By \cite{G69}, amplitude is
equivalent to the existence of a positively curved smooth metric $k$ on $\one$. One can then ask whether the
metric $k^F$ constructed above (perhaps regularized) would be positively curved. In section 10 we will see that
this is not always true, but $k^F$ does have a positivity property:
\begin{thm} 
Let $F\to B$ be a holomorphic vector bundle of rank $m$, $k$ a smooth hermitian metric on the line bundle
$\one$, positively curved on each $\onb$, $b\in B$, and $k^F$ the associated hermitian metric on $F$.
If $\kappa$ is a continuous real $(1,1)$ form on $B$ such that $i\Theta(k)\ge \pi^*\kappa$, then the metric
$\det k^F$ on the line bundle $\det F\to B$ satisfies 
\begin{equation} 
i\Theta(\det k^F)\ge m\kappa.
\end{equation}
In particular, if $k$ is positively curved, then so is $\det k^F$.
\end{thm}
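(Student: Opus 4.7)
My plan is to reduce the theorem to a plurisubharmonicity statement for a fiberwise Monge--Amp\`ere energy, then invoke a Berndtsson-type second variation principle. I first localize on $B$: take a neighborhood $U$ of an arbitrary point over which $F$ trivializes, and pick a smooth hermitian reference $h^0_F$ on $F|_U$ which is constant in the chosen trivialization of $F|_U$. Then its induced metric $h^0_\bP$ has a fiber-constant curvature $\omega_0=i\Theta(h^0_\bP)$, pulled back from the fiber under $\bP F|_U\cong U\times\bP^{m-1}\to\bP^{m-1}$, restricting to a fixed Fubini--Study form $\omega^0$ on each $\bP F_b\cong\bP^{m-1}$; also $i\Theta(\det h^0_F)=0$.

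Next I translate $i\Theta(\det k^F)$ into Monge--Amp\`ere language. Writing $(k^F)_\bP|_{\bP F_b}=g_b^*h^0_\bP|_{\bP F_b}$ for some $g_b\in\GL(F_b)$ (determined up to a unitary factor, so $|\det g_b|$ is well defined), one has $\det k^F(b)=|\det g_b|^2\det h^0_F(b)$. By Lemma~7.5 the map $E\circ W:\GL(F_b)\to\bR$ is a continuous group homomorphism and must therefore factor through $\log|\det|$; testing on scalars $\lambda I$ pins down $\cE(h^0_\bP,(k^F)_\bP|_{\bP F_b})=-\frac{2}{m}\log|\det g_b|$. Setting $\Phi(b)$ equal to this value I obtain $i\Theta(\det k^F)=i\Theta(\det h^0_F)+m\,i\ddb\Phi=m\,i\ddb\Phi$, so the theorem is equivalent to $i\ddb\Phi\ge\kappa$ as a current on $U$.

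For this I would write $\Phi(b)=E(V(b,\cdot))$, where $V(b,\cdot)\in\cH(\omega^0)$ is the unique solution to problem~(1.3) on $(\bP^{m-1},\omega^0)$ with barrier $v_0(b,\cdot)=-\log(k/h^0_\bP)|_{\bP F_b}$. Existence and uniqueness come from Theorem~1.3: the barrier is $\omega^0$-plurisubharmonic because $\omega^0+i\ddb_x v_0=i\Theta(k)|_{\bP F_b}>0$, and the $\SU(m)$-action on $(\bP^{m-1},\omega^0)$ satisfies condition~(1.4) by the first example of Section~5. The hypothesis $i\Theta(k)\ge\pi^*\kappa$ translates, via $\omega_0+i\ddb v_0=i\Theta(k)$, into $\omega_0+i\ddb v_0\ge\pi^*\kappa$ on $\pi^{-1}(U)$. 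The plan is then to invoke a Berndtsson-type principle: for a family $V(b,x)$ on $\pi^{-1}(U)$ with $\omega_0+i\ddb V\ge\pi^*\kappa$, the fiberwise energy $b\mapsto E(V(b,\cdot))$ satisfies $i\ddb\Phi\ge\kappa$ on $U$.

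The main obstacle is showing that the family $V(b,x)$ of fiberwise extremals inherits the total-space inequality $\omega_0+i\ddb V\ge\pi^*\kappa$ from $v_0$; this does not follow from the fiber-by-fiber variational definition. My strategy is to realize $V$ as the upper semi-continuous regularization of the envelope of all $\tilde V\ge v_0$ on $\pi^{-1}(U)$ which satisfy $\omega_0+i\ddb\tilde V\ge\pi^*\kappa$ and whose fiberwise restrictions are admissible, and then to identify this envelope pointwise with $V$ via uniqueness (Proposition~5.1). A regularization step---convolve $k$ in the base trivialization to produce smooth $k^\varepsilon\to k$, prove the theorem for each $k^\varepsilon$, then pass to the limit using Proposition~6.2---takes care of the a priori continuity, rather than smoothness, of $k^F$.
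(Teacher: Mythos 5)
Your reductions are sound and essentially match the paper's: localizing over a trivializing chart, taking a flat reference $h^0_F$, and using the homomorphism property of $E\circ W$ (simplicity of $\SL(m,\bC)$ plus the scalar computation) to get $\log(\det k^F/\det h^0_F)=-m\Phi$, so that the theorem reduces to $i\ddb\Phi\ge\kappa$ for the fiberwise minimal energy $\Phi(b)=E(V(b,\cdot))$. This is exactly Lemma 8.2 and the reduction to Proposition 8.3 in the paper (the paper additionally reduces $\kappa$ to $i\ddb q$ and then to $\kappa=0$ by twisting $k$ by $e^{\pi^*q}$, which you could do as well).

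The gap is in the step you yourself flag as ``the main obstacle,'' and it is not merely unproven: the intermediate statement is false. You want to show that the family of fiberwise minimizers $V(b,x)$ satisfies $\omega_0+i\ddb V\ge\pi^*\kappa$ on the total space and then apply a Berndtsson-type convexity of $E$ along plurisubharmonic variations. But Theorem 10.1 of the paper constructs $\bar v\in C^\infty(B\times\bP_1)$ with $p^*\omega+i\ddb\bar v>0$ for which the family $u$ of fiberwise minimizers is \emph{not} $p^*\omega$-plurisubharmonic --- i.e., even the case $\kappa=0$ of your intermediate claim fails. Your proposed fix via envelopes does not repair this: the upper envelope of competitors $\tilde V\ge v_0$ with $\omega_0+i\ddb\tilde V\ge\pi^*\kappa$ and admissible fiberwise restrictions does not itself have admissible fiberwise restrictions (admissibility, i.e., being of the form $\omega_{\tilde V(b,\cdot)}=g^*\omega_0$, is a finite-dimensional orbit condition not preserved under sup or usc regularization), so uniqueness of the fiberwise minimizer gives you no identification of the envelope with $V(b,\cdot)$; and by the Section 10 example no such identification can exist in general. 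The subtle point the paper exploits is that although the minimizers themselves do not vary plurisubharmonically, their \emph{energies} do. This is proved by an entirely different mechanism: writing
\[
\chi(b)=\inf_{g\in G^\bC}\max_{x\in M}\big(\bar v(b,gx)+V(g)(x)\big),
\]
showing via $i\ddb\, V(\cdot)(x)=o_x^*\omega$ (Proposition 9.6) that the function under the $\inf$ is plurisubharmonic on $B\times G^\bC$ and right $G$-invariant, and then applying a Kiselman/Loeb-type minimum principle over the Stein group $G^\bC$ (Lemma 9.2). To complete your argument you would need to replace your step entirely by something of this kind.
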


Again, in general $\det k^F$ is only continuous, and (8.2) is a comparison of currents.

Theorem 8.1 will follow from two results. The first expresses $\det k^F$, and in general
$\det h_F$, through Monge--Amp\`ere energy;
the second, for general $(M,\omega)$, establishes that---loosely stated---if the barrier $v_0$ in (1.3) depends on complex
parameters in a plurisubharmonic way, then so does minimal Monge--Amp\`ere energy.
\begin{lem} 
Let $h^0_F$ and $h_F$ be hermitian metrics on $F\to B$, $h^0_\bP, h_\bP$ the induced metrics on the
line bundle $\one$. Define $u:\bP F\to\bR$ by $h_\bP=e^{-u}h^0_\bP$. With the K\"ahler forms
$\omega_b=i\Theta(h^0_\bP|\bP F_b)$, $b\in B$, the Monge--Amp\`ere energy of $u_b=u|\bP F_b\in\cH(\omega_b)$
satisfies
\begin{equation} 
\log\dfrac{\det h_F}{\det h^0_F}(b)=-mE(u_b).
\end{equation}
\end{lem}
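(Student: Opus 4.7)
The plan is to reduce (8.3) to a statement on a single fiber and then exploit the homomorphism property of $E \circ W$ from Lemma 7.5.

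First I would fix $b \in B$ and apply the Section 7 framework to $L = \onb$, $M = \bP F_b$, $h^0 = h_\bP^0|\bP F_b$; here $H^\bC = \GL(F_b)$ acts on $L$ via the induced action on $\cO(1)$. Since any two hermitian inner products on $F_b$ are related by a linear automorphism, I pick $g \in \GL(F_b)$ with $h_F|F_b = g^*(h_F^0|F_b)$. By (8.1) we then have $h_\bP|\bP F_b = g^*(h_\bP^0|\bP F_b)$, and comparison with the defining relation (7.1) shows $u_b = W(g)$. In an $h_F^0$-orthonormal basis of $F_b$ the Gram matrix of $h_F|F_b$ is $g^\dag g$, so $\log(\det h_F/\det h_F^0)(b) = \log|\det g|^2$, and it remains to prove $E(W(g)) = -(2/m)\log|\det g|$ for every $g \in \GL(F_b)$.

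By Lemma 7.5, $E \circ W : \GL(F_b) \to \bR$ is a smooth group homomorphism. Since the commutator subgroup of the connected Lie group $\GL(m, \bC)$ is $\SL(m, \bC)$, any continuous homomorphism $\GL(m, \bC) \to \bR$ factors through $\det : \GL(m, \bC) \to \bC^\times$; and any continuous homomorphism $\bC^\times \to \bR$ vanishes on the compact subgroup $S^1$, hence has the form $\lambda \mapsto c \log|\lambda|$ for some $c \in \bR$. Therefore $E(W(g)) = c \log|\det g|$ for all $g$.

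To pin down $c$, I would evaluate at $g = \lambda I$, $\lambda \in \bC^\times$. As observed in the proof of Proposition 7.1, $W(\lambda I) = -\log|\lambda|^2$; and $E$ acts as the identity on constants, since $dE$ pairs with the constant tangent vector $1$ to give $\vo^{-1}\int_M \omega^n = 1$. Hence $E(W(\lambda I)) = -\log|\lambda|^2$, while $|\det(\lambda I)| = |\lambda|^m$; matching forces $c = -2/m$, and substitution yields (8.3). The main obstacle I anticipate is the abelianization step---specifically the fact that a continuous homomorphism $\GL(m,\bC) \to \bR$ must factor through the determinant---the rest being essentially bookkeeping.
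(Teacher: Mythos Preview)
Your proposal is correct and follows essentially the same approach as the paper: reduce to a single fiber, identify $u_b=W(g)$ via (8.1) and (7.1), compute $\det h_F/\det h_F^0=|\det g|^2$, invoke Lemma 7.5, and evaluate on scalar matrices. The only cosmetic difference is that the paper kills $E\circ W$ on $\SL(m,\bC)$ by noting that $\SL(m,\bC)$ is simple, whereas you phrase this as factoring through the abelianization $\GL(m,\bC)/[\GL,\GL]\cong\bC^\times$; these are equivalent observations.
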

\begin{proof}
Since this is a fiberwise statement, it suffices to prove when $B=\{b\}$ is a singleton, and so $F$ is a vector
space. As there is only one fiber, we will omit the $b$'s in our formulas. We can take $F=\bC^m$,
$m\ge 2$, and $h^0_F(\zeta)=\zeta^\dag\zeta$, $\zeta\in\bC^m$ (a column vector). There is a $g\in\GL(m,\bC)$ such that
\begin{equation} 
h_F(\zeta)=(g\zeta)^\dag(g\zeta).
\end{equation}
We will use the same symbol for the corresponding hermitian form $h_F(\zeta,\zeta')=(g\zeta')^\dag(g\zeta)$.
Since $h_\bP=g^* h^0_\bP$ by (8.1), $u_b$ in (8.3) is just $W(g)$ of (7.1). 
If $e_1,\dots,e_m\in\bC^m$ is the standard basis, by definition
\begin{equation*}
(\det h_F)(e_1\wedge\dots\wedge e_m)= \det\big(h_F(e_i,e_j)\big)_{i,j}=\det(e_j^\dag g^\dag g e_i)_{i,j}
=\det g^\dag g.
\end{equation*}
This holds when $g=I$ as well, hence
\begin{equation} 
\det h_F/\det h^0_F=\det g^\dag g=|\det g|^2.
\end{equation}

To see how this is related to the right hand side of (8.3), we vary $g\in\GL(m,\bC)$, and with it, $h_F$. By Lemma 7.5
$E\circ W:\GL(m,\bC)\to\bR$ is a homomorphism of Lie groups. Since $\SL(m,\bC)$ is simple, $E\circ W$ vanishes
on it. If $\lambda\in\bC^\times$ and $g=\lambda I\in\GL(m,\bC)$, 
then $h_F=|\lambda|^2h^0_F$ and $h_\bP=|\lambda|^2h^0_\bP$,
so $W(\lambda I)=-\log|\lambda|^2$ and $E\big(W(\lambda I)\big)=-\log|\lambda|^2$. Writing a general 
$g\in\GL(m,\bC)$ as $\lambda Ig_0$, where $\lambda^m=\det g$ and $g_0\in\SL(m,\bC)$, we obtain
\begin{equation} 
E(u_b)=E\big(W(g)\big)=E\big(W(\lambda I)\big)+ E\big(W(g_0)\big)=-\frac1m\log|\det g|^2.
\end{equation}
Comparison with (8.5) proves (8.3).
\end{proof}
\begin{proof}[Proof of Theorem 8.1] Rather than giving a complete proof, here we will reduce the theorem to Proposition
8.3 below, that will be derived from a more general result in section 9.

Since curvature is computed locally, we can assume that $F$ is trivial, $F=B\times\bC^m\to B$, and $B$ is an open 
subset of some $\bC^d$. We will denote points of $\bC^d$ by $z=(z_1,\dots,z_d)$. It will suffice to prove with any $\var>0$ 
that $i\Theta(\det k^F)\ge m(\kappa-i\var\ddb|z|^2)$ in some neighborhood of an arbitrary $b\in B$. Suppose
$\kappa=i\sum_{\alpha,\beta}\kappa_{\alpha\beta}dz_\alpha\wedge d\bar z_\beta$, and let 
$q(z)=\sum_{\alpha,\beta}\kappa_{\alpha\beta}(b)z_\alpha\bar z_\beta-\var|z|^2/2$. Then
$i\ddb q=\kappa-i\var\ddb|z|^2/2$ at $b$, and therefore close to $b$
\[
\kappa-i\var\ddb|z|^2\le i\ddb q\le\kappa.
\]
Hence it will suffice to prove that $i\Theta(k)\ge\pi^*i\ddb q$ implies $i\Theta(\det k^F)\ge i\,m\ddb q$, the special case
of the theorem when $\kappa=i\ddb q$ with a smooth function $q$.

In this case, along with $k$ consider the metric $e^{\pi^*q}k$ on $\one$ and the metric $(e^{\pi^*q}k)^F=e^q k^F$ on
$F$ associated with it.
Since
\[
\Theta(e^{\pi^*q} k)=\Theta(k)-\pi^*\kappa\qquad\text{and}\qquad 
\Theta\big(\det(e^q k^F)\big)=\Theta(\det k^F)-m\kappa,
\]
$\Theta(k)\ge\pi^*\kappa$ precisely when $\Theta(e^{\pi^*q} k)\ge 0$, and $\Theta(\det k^F)\ge m\kappa$  precisely 
when $\Theta\big(\det(e^qk^F)\big)\ge 0$. This shows that it will be enough to prove when $\kappa=0$.

For the last reduction step let $p:\bP F=B\times \bP_{m-1}\to \bP_{m-1}$ denote the projection, so that $\one$ is the 
pullback of $\cO_{\bP_{m-1}}(1)$ by $p$. Define a reference metric $h^0_F$ by $h_F^0(b,\zeta)=|\zeta|^2$, 
$(b,\zeta)\in B\times\bC^m=F$.  
The induced metric $h^0_\bP$ on $\one$ is the pullback of the Fubini--Study metric on 
$\cO_{\bP_{m-1}}(1)$.  Denoting the curvature of the latter by $-i\omega$, therefore $i\Theta(h^0_\bP)=p^*\omega$.

If $\bar v\in C^\infty(\bP F)$ is defined by $k=e^{-\bar v}h^0_\bP$, we have 
$$
0\le i\Theta(k)=i\Theta(h^0_\bP)+i\ddb \bar v=p^*\omega+i\ddb\bar v.
$$
Since $k$ restricted to $\onb$ is positively curved, $\bar v(b,\cdot)\in\cH(\omega)$. Denote Monge--Amp\`ere energy in 
$\cH(\omega)$ by $E$, and let
\begin{equation} 
\chi(b)=\min\{E(v):v\in \cH(\omega) \text{ is admissible, }v\ge\bar v(b,\cdot)\},\qquad b\in B.
\end{equation}
By (7.3) and by Lemma 8.2 
\(
\log(\det k^F/\det h^0_F)=-m\chi
\).
Since $h^0_F$ and $\det h^0_F$ are flat, $i\Theta(\det k^F)=i\,m\ddb\chi$ follows. Hence the next proposition implies
Theorem 8.1:
\end{proof}
\begin{prop} 
If $\bar v\in C^\infty(B\times \bP_{m-1})$ satisfies $p^*\omega+i\ddb \bar v\ge0$, 
then $\chi:B\to\bR$ defined in (8.7) is plurisubharmonic.

\end{prop}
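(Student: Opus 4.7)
I want to show that $\chi:B\to\bR$ is plurisubharmonic. Since plurisubharmonicity is local and tested by restriction to holomorphic discs, I reduce to the case $B=\Delta\subset\bC$ and aim to show $\chi$ subharmonic. Continuity of $\chi$ is immediate from the monotonicity estimate $|\chi(b_1)-\chi(b_0)|\le\sup_x|\bar v(b_1,x)-\bar v(b_0,x)|$ --- shifting the barrier $\bar v(b_0,\cdot)$ by a constant $c$ shifts the minimum energy by exactly $c$, because $E(v+c)=E(v)+c$ --- together with uniform continuity of $\bar v$ in $b$.

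The main structural input is the behavior of $E$ along holomorphic $H^\bC$-families. By Proposition 7.1 every admissible potential is of the form $W(g)$ for $g\in H^\bC=\GL(m,\bC)$, and the identity $E(W(g))=-\tfrac1m\log|\det g|^2$ (Lemma 7.5 together with (8.6)) implies that for any holomorphic $g:\Delta\to H^\bC$ the function $b\mapsto E(W(g(b)))$ is pluriharmonic. Moreover, letting $\Psi(b,x)=g(b)x$ be the associated map $\Delta\times \bP_{m-1}\to \bP_{m-1}$, a short curvature computation (comparing the metric $g(b)^*h^0$ on $p^*L$ with $p^*h^0$) gives $p^*\omega+i\ddb W(g(b))(x)=\Psi^*\omega$. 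Since $\Psi$ has $(n+1)$-dimensional source and $n$-dimensional target, $\Psi^*\omega$ is nonnegative of rank at most $n$, so $(b,x)\mapsto W(g(b))(x)$ is not just $p^*\omega$-plurisubharmonic on $\Delta\times \bP_{m-1}$ but actually solves $(p^*\omega+i\ddb\Phi)^{n+1}=0$ --- a disc-parameter geodesic, generalizing Proposition 2.5.

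The sub-mean inequality $\chi(b_0)\le\frac1{2\pi}\int_0^{2\pi}\chi(b_0+re^{i\theta})\,d\theta$ for $b_0\in\Delta$ and a small closed subdisc $\bar D=\{|b-b_0|\le r\}$ will follow once I produce, for each $\var>0$, a $g_\var:\bar D\to H^\bC$ holomorphic on $D$ and continuous on $\bar D$ satisfying (i) $W(g_\var(b))\ge\bar v(b,\cdot)$ on $\bP_{m-1}$ for every $b\in\bar D$, and (ii) $E(W(g_\var(b)))\le\chi(b)+\var$ for $b\in\partial D$. Indeed, (i) at $b_0$ makes $W(g_\var(b_0))$ an admissible competitor dominating $\bar v(b_0,\cdot)$, so $\chi(b_0)\le E(W(g_\var(b_0)))$; by pluriharmonicity this equals the $\partial D$-mean of $E(W(g_\var(\cdot)))$; by (ii) this mean is $\le\frac1{2\pi}\int\chi(b_0+re^{i\theta})\,d\theta+\var$; letting $\var\to 0$ concludes.

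The hard part will be the construction of $g_\var$: the boundary extremals $u_b=W(g_b)$ vary only continuously (not holomorphically) in $b\in\partial D$, and a continuous $H^\bC$-valued boundary datum does not in general extend holomorphically across $D$. My plan is to solve the Dirichlet problem for $(p^*\omega+i\ddb\Phi)^{n+1}=0$ on $\bar D\times \bP_{m-1}$ with boundary data $u_b$ on $\partial D\times \bP_{m-1}$, and then exploit that all boundary data lie in a single $H^\bC$-orbit of $\cH$: the Bedford--Kalka foliation of the solution by graphs of holomorphic maps $\bar D\to\bP_{m-1}$, combined with the orbit structure of the boundary values, should force $\Phi(b,\cdot)=W(g_\var(b))$ with $g_\var$ holomorphic. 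Property (i) would then follow from a maximum principle applied to $\bar v-\Phi$ along the Bedford--Kalka leaves (using $p^*\omega$-psh-ness of $\bar v$, the homogeneous Monge--Amp\`ere satisfied by $\Phi$, and $\bar v\le u_b=\Phi$ on the boundary) --- a parametrized analog of the fiberwise convexity in Proposition 2.2. Should this rigidity argument not deliver interior admissibility directly, a fallback is to approximate $u_b|_{\partial D}$ by boundary values of a genuinely holomorphic $H^\bC$-valued map (paying a small $\var$ in (ii)) and to enforce (i) globally by adding an appropriate $b$-independent positive constant, whose magnitude is controlled via the compactness estimates of Proposition 3.3 and the stability of the minimum in Proposition 6.2.
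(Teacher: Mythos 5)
Your reduction to a sub-mean-value inequality over small discs is sound as far as it goes: the identity $E(W(g))=-\tfrac1m\log|\det g|^2$ does make $b\mapsto E(W(g(b)))$ pluriharmonic for holomorphic $g$, the computation $p^*\omega+i\ddb\,W(g(b))(x)=\Psi^*\omega$ is correct (it is essentially Theorem 9.7 of the paper), and once a holomorphic $g_\var:\bar D\to H^\bC$ with properties (i) on $\partial D$ and (ii) exists, interior domination of $\bar v$ follows from the maximality of the smooth HCMA solution $W(g_\var(\cdot))$. But the entire content of the proposition is concentrated in the step you defer: producing that holomorphic family. Neither of your two mechanisms delivers it. The Dirichlet problem for $(p^*\omega+i\ddb\Phi)^{n+1}=0$ with boundary data $u_b$ lying in a single $H^\bC$-orbit does \emph{not} in general have a solution of the form $W(g(b))$ with $g$ holomorphic: already for geodesic segments, the solution joining two potentials in one orbit leaves the orbit unless the endpoints lie on a one-parameter subgroup, and for merely continuous boundary data the Bedford--Kalka foliation you invoke is not even available (it requires regularity of $\Phi$ that the Dirichlet problem does not supply). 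The fallback is also obstructed: a continuous map $\partial D\to H^\bC$ need not be approximable by boundary values of holomorphic discs (e.g.\ a nonzero winding number of $\det g_b$ is a topological obstruction), and even when it is, there is no control ensuring the approximant's potentials still dominate $\bar v(b,\cdot)$ on $\partial D$ after adding a uniformly small constant.

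A structural reason to distrust the plan: as written it never uses the $G$-invariance of the data, so if it worked it would show that $\inf_{g\in H^\bC}w(b,g)$ is plurisubharmonic for an \emph{arbitrary} plurisubharmonic $w$, which is false. Invariance under the real form $G$ is exactly what makes a minimum principle available, and this is how the paper proceeds (Theorem 9.1 via Lemma 9.2, a Kiselman/Loeb--Chafi--Deng--Zhang--Zhou minimum principle on $B\times G^\bC$): one writes $\chi(b)=\inf_{g}\max_{x\in M}\bigl(\bar v(b,gx)+V(g)(x)\bigr)$, checks via the cocycle identity for $V$ and the formula $i\ddb V(\cdot)(x)=o_x^*\omega$ that the inner expression is plurisubharmonic in $(b,g)$ and right $G$-invariant, and then applies the minimum principle. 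To repair your argument you would essentially have to reprove such a minimum principle by hand through holomorphic near-minimizing selections, which is a genuinely harder route and is not carried out here.
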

The proof will be given in the next section.

\section{Plurisubharmonic variation} 

In this section we prove a generalization of Proposition 8.3. Let us return to our connected compact K\"ahler manifold
$(M,\omega)$, on which a connected compact Lie group $G$ acts by holomorphic isometries, with a moment map
$\mu:M\to\fg^*$ and complexified action $G^\bC\times M\to M$. In addition, let $B$ be a complex manifold,
$p:B\times M\to M$ the projection.
\begin{thm} 
If $\bar v:B\times M\to[-\infty,\infty)$ is $p^*\omega$--plurisubharmonic, then
\begin{equation} 
\chi(b)=\inf\{E(v): v\in\cH\text{ is admissible}, v\ge\bar v(b,\cdot)\}
\end{equation}
defines a plurisubharmonic function $\chi:B\to[-\infty,\infty)$.
\end{thm}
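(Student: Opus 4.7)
The plan. Plurisubharmonicity being local and invariant under holomorphic pullback, it suffices to show that $\chi\circ\sigma$ is subharmonic on the unit disc for every holomorphic $\sigma:\overline\Delta\to B$. By approximating $\bar v$ from above by smooth $p^*\omega$-plurisubharmonic functions and invoking Proposition 6.2 fiberwise to pass to the decreasing limit, I would reduce further to the case of a smooth $\bar v$. Upper semicontinuity of $\chi$ is immediate from upper semicontinuity of $\bar v$: given a near-minimizer $u\ge\bar v(b_0,\cdot)$ with $E(u)\le\chi(b_0)+\epsilon$, the constants $\delta_n:=\max\bigl(0,\max_M(\bar v(b_n,\cdot)-u)\bigr)$ tend to $0$ as $b_n\to b_0$ and $u+\delta_n$ is an admissible competitor for $\chi(b_n)$.

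For the sub-mean-value inequality along $\sigma:\overline\Delta\to B$, the structural identity I would exploit is this: if $h:\overline\Delta\to G^\bC$ is holomorphic, then $H(s,x):=h(s)\cdot x$ defines a holomorphic map $\overline\Delta\times M\to M$, so $H^*\omega\ge 0$ is a semipositive $(1,1)$-form on $\overline\Delta\times M$ whose restriction to each slice $\{s\}\times M$ is $h(s)^*\omega$. Up to a pluriharmonic term in $s$, $H^*\omega$ coincides with $\pi^*\omega+i\ddb\Phi$ where $\Phi(s,x):=U(h(s))(x)$, so $\Phi$ is $\pi^*\omega$-plurisubharmonic on $\overline\Delta\times M$ and each slice $\Phi(s,\cdot)$ is admissible. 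The strategy is then to pick $g_\theta:=h(e^{i\theta})$ with $U(g_\theta)+c_\theta$ nearly optimal at $\sigma(e^{i\theta})$, use a disc-analog of Propositions 2.4/2.5 to deduce that $s\mapsto E(\Phi(s,\cdot))$ is subharmonic (pluriharmonic, in fact, for holomorphic $h$), and apply the maximum principle to the $\pi^*\omega$-psh function $\sigma^*\bar v-\Phi-c(s)$ for a suitable subharmonic scalar correction $c(s)$: this produces $\Phi(0,\cdot)+c(0)$ as an admissible competitor for $\chi(\sigma(0))$, and the sub-mean-value for $E(\Phi(\cdot,\cdot))+c(\cdot)$ at $s=0$ gives the one for $\chi\circ\sigma$.

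The hard part is producing the holomorphic $h:\overline\Delta\to G^\bC$ with prescribed near-optimal boundary values, since continuous maps $\partial\Delta\to G^\bC$ generally admit no holomorphic extension (topological obstructions from $\pi_1$). My plan to bypass this is to abandon prescription of $h$ and construct instead a Perron envelope: let $\Phi$ be the supremum of all $\pi^*\omega$-plurisubharmonic functions on $\overline\Delta\times M$ dominated on $\partial\Delta\times M$ by the near-minimizers $u_\theta$. Such $\Phi$ automatically solves $(\pi^*\omega+i\ddb\Phi)^{n+1}=0$ and is foliated by holomorphic curves (Bedford--Kalka). The main obstacle I anticipate is to show that these leaves are in fact orbits of holomorphic one-parameter subgroups of $G^\bC$, so that $\Phi(0,\cdot)$ ends up admissible: this admissibility of the Perron envelope at interior points, coming from the $G^\bC$-structure of the boundary data, is the crux of the argument.
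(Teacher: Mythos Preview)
Your reduction to smooth $\bar v$ via Proposition 6.2 is fine, and so is the upper semicontinuity argument. But the core of the proposal has a genuine gap, and it is precisely the one you flag yourself: there is no mechanism forcing the Perron envelope to be \emph{admissible} on interior slices. The Bedford--Kalka foliation gives holomorphic leaves along which $\pi^*\omega+i\ddb\Phi$ degenerates, but nothing ties those leaves to orbits of one-parameter subgroups of $G^\bC$; in general the envelope at $s=0$ will satisfy $\omega+i\ddb\Phi(0,\cdot)\ge 0$ without being equal to $g^*\omega$ for any $g$. Admissibility is a finite-dimensional constraint on an infinite-dimensional space of $\omega$-psh functions, and Perron envelopes do not respect such constraints. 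Your earlier idea of prescribing a holomorphic $h:\overline\Delta\to G^\bC$ with given boundary values is blocked, as you note, by $\pi_1(G^\bC)$; the Perron substitute does not repair this.

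The paper's proof sidesteps the issue entirely by never leaving the parametrization by $G^\bC$. Writing $V(g)\in\cH$ for the admissible potential with $\omega_{V(g)}=g^*\omega$ and $E(V(g))=0$, one has
\[
\chi(b)=\inf_{g\in G^\bC}\max_{x\in M}\bigl(\bar v(b,x)-V(g)(x)\bigr).
\]
After substituting $g\mapsto g^{-1}$ and using the cocycle identity $V(g^{-1})=-(g^{-1})^*V(g)$, this becomes $\chi(b)=\inf_g w(b,g)$ with
\[
w(b,g)=\max_{x\in M}\bigl(\bar v(b,gx)+V(g)(x)\bigr).
\]
The key new ingredient is a direct computation (Proposition 9.6) showing that for fixed $x$ the function $g\mapsto V(g)(x)$ satisfies $i\ddb_g V(\cdot)(x)=o_x^*\omega$, where $o_x(g)=gx$. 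This exactly cancels the $-o_x^*\omega$ coming from $(b,g)\mapsto\bar v(b,gx)$, so each function inside the max is plurisubharmonic on $B\times G^\bC$; hence so is $w$. Since $w$ is right $G$-invariant (from the cocycle property of $V$), a group version of Kiselman's minimum principle (due to Loeb and Chafi) yields that $\chi(b)=\inf_g w(b,g)$ is plurisubharmonic. The admissibility constraint is thus carried automatically by the $G^\bC$-parameter, and no envelope or disc-extension problem ever arises.
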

In particular, Proposition 8.3 holds.---The proof will need some preparation. We start with a variant of Kiselman's minimum principle \cite{K78}.
\begin{lem} 
Suppose $w$ is a plurisubharmonic function on $B\times G^\bC$, left invariant under $G$ in the sense that
$w(b,g_0g)=w(b,g)$ if $b\in B$, $g_0\in G$, $g\in G^\bC$. Then the function
\[
\chi:B\ni b\mapsto\inf\{w(b,g): g\in G^\bC\}\in[-\infty,\infty)
\]
is plurisubharmonic. The same holds if instead of left, we assume right invariance: $w(b,gg_0)=w(b,g)$ if $g_0\in G$.
\end{lem}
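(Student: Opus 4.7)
The plan is to reduce to Kiselman's classical minimum principle \cite{K78} via a Berndtsson-type integral regularization that exploits the $G$-invariance. Since $g\mapsto g^{-1}$ is a biholomorphism of $G^\bC$ interchanging left- and right-$G$-invariance, it suffices to treat the left-invariant case. By standard regularization (convolving with a $G$-invariant approximate identity, obtained by averaging a usual mollifier over the left $G$-action with Haar measure) I would reduce to the case that $w$ is smooth, bounded below, strictly plurisubharmonic, and left-$G$-invariant. In this smooth case the Cartan decomposition of Theorem 2.9 gives
\begin{equation*}
\chi(b)=\inf_{X\in\fg} w(b,\exp iX),
\end{equation*}
from which $\chi$ is immediately seen to be upper semicontinuous.

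\textbf{Main step.} Since $G^\bC$ is a complex Lie group, it carries a left-$G^\bC$-invariant (hence in particular left-$G$-invariant) holomorphic volume form $d\lambda$. By averaging any strictly psh exhaustion of $G^\bC$ (which exists since $G^\bC$ is Stein by Theorem 2.9) over the compact group $G$ one obtains a left-$G$-invariant strictly psh exhaustion; let $U_k\nearrow G^\bC$ be the sublevel sets, which are $G$-invariant, relatively compact, pseudoconvex. For each $k$ and $\epsilon>0$ define
\begin{equation*}
\chi^\epsilon_k(b):=-\epsilon\log\int_{U_k} e^{-w(b,g)/\epsilon}\,d\lambda(g).
\end{equation*}
Berndtsson's theorem on the plurisubharmonicity of logarithms of weighted integrals of psh weights over pseudoconvex domains implies that each $\chi^\epsilon_k$ is plurisubharmonic on $B$.

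Standard convexity of $\beta\mapsto\log\int e^{-\beta w}d\lambda$ shows that $\chi^\epsilon_k$ is non-decreasing as $\epsilon\searrow 0$, and Laplace asymptotics---using $G$-invariance of both $w$ and $d\lambda$, which forces the minimum to be attained along a full $G$-orbit of positive dimension $\dim G$---yield pointwise $\chi^\epsilon_k(b)\to \inf_{g\in U_k} w(b,g)$ as $\epsilon\to 0$. As $k\to\infty$, the sets $U_k$ exhaust $G^\bC$, so $\inf_{U_k} w(b,\cdot)\searrow \chi(b)$. A diagonal argument combined with the upper semicontinuity of $\chi$ identifies $\chi$ with the upper-semicontinuous regularization of the supremum of the psh family $\{\chi^\epsilon_k\}$; this regularization is plurisubharmonic, and we conclude that $\chi$ is psh.

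\textbf{Main obstacle.} The hardest part will be a clean verification of Berndtsson's positivity theorem in this non-algebraic context, and in particular the careful bookkeeping of the Laplace asymptotics when the minimizing set is the full $G$-orbit (so the asymptotics carry an extra $\dim G$-dimensional Gaussian volume factor that must be absorbed into a harmless additive constant that does not affect plurisubharmonicity). An alternative route, avoiding the integral regularization, would be to reduce directly to a maximal torus using $G^\bC=G\cdot T^\bC\cdot G$ and apply Kiselman's minimum principle to the tube domain $T^\bC\cong(\bC^*)^r$; the technical core then becomes bridging from torus invariance to full $G$-invariance via averaging.
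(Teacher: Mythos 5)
There is a genuine gap at the main step. The plurisubharmonicity of
\[
\chi^\epsilon_k(b)=-\epsilon\log\int_{U_k}e^{-w(b,g)/\epsilon}\,d\lambda(g)
\]
does not follow from ``Berndtsson's theorem on the plurisubharmonicity of logarithms of weighted integrals of psh weights over pseudoconvex domains.'' No such theorem holds for a general psh weight: $-\log\int e^{-\phi}$ is \emph{not} psh for arbitrary psh $\phi$ on a pseudoconvex fibration, just as Kiselman's minimum principle fails without an invariance hypothesis. Berndtsson's complex Pr\'ekopa theorem requires the fibers to be tube domains and $\phi$ to be independent of the imaginary parts, i.e.\ invariance under the \emph{noncompact} split real form $\bR^m$ of $\bC^m$. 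In your setting the invariance is under the \emph{compact} real form $G$ of $G^\bC$, which is a genuinely different situation; the integral statement you need is precisely the Pr\'ekopa/minimum principle for compact real forms due to Loeb, Chafi, and Deng--Zhang--Zhou (the paper's references \cite{L85,C01,D+14}). In other words, the one step you black-box carries the entire content of the lemma, so as written the argument is either false (if taken at face value for general psh weights) or circular (if the citation is corrected to \cite{D+14}). The paper instead takes \cite[Th\'eor\`eme 2.1]{C01} --- the minimum principle for smooth, strictly psh, exhaustive, $G$-invariant functions --- as the starting point and reduces to it by (a) adding $\var f$ with $f$ a $G$-invariant strictly psh exhaustion of the Stein manifold $B\times G^\bC$ and letting $\var\downarrow 0$, and (b) smoothing by convolution.

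Two secondary points. Your regularization step is also shaky: averaging a mollifier over the compact group $G$ does not smooth $w$ (the $G$-orbits are totally real of half dimension), and convolution on a nonabelian group must be performed on the correct side --- the paper convolves on the right, $w_\var(h)=\int w(h\exp X)\rho_\var(X)\,d\lambda(X)$, precisely so that left $G$-invariance is preserved. Finally, your alternative route via $G^\bC=G\,T^\bC\,G$ also has a gap: after applying Kiselman on $T^\bC$ you are left with an infimum over the compact parameter set $G$ of a family of psh functions, and such an infimum is upper semicontinuous but not psh in general, so the bridge from the torus back to $G$ is exactly where the difficulty of \cite{C01,L85} lives.
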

\begin{proof}
Chafi and Loeb in \cite{C01, L85} already proved this under the extra condition that $w$ is smooth, strictly
plurisubharmonic, and exhaustive. Deng, Zhang, and Zhou in \cite{D+14} gave a new proof and a substantial 
generalization; they also sketched an argument to reduce general functions $w$ to  those satisfying the extra conditions.
Since in the setup of our lemma this reduction greatly simplifies, we explain how to do it. We assume $w$ is left
invariant.

Our starting point is \cite[Th\'eor\`eme 2.1]{C01} that implies our claim under the extra assumptions, and we will
show how to get rid of the extra assumptions in two steps. Of course, it suffices to show that $\chi$ is
plurisubharmonic in a neighborhood of an arbitrary $b\in B$. Therefore we can assume $B$ is a pseudoconvex
open subset of some $\bC^k$.

Suppose $w$ is just smooth. Since $G^\bC$ is Stein (Theorem 2.9), $B\times G^\bC$ admits a smooth
strictly plurisubharmonic exhaustion function $f:B\times G^\bC\to[0,\infty)$. By averaging, we can arrange that
$f(b,gh)=f(b,h)$ if $g\in G$. Postcomposing with a suitable increasing, convex, and smooth function $\bR\to\bR$
we can further arrange that $w+\var f$,  for every $\var>0$, is exhaustive and strictly plurisubharmonic. By
\cite{C01}, then
\[
\chi^\var(b)=\inf\{w(b,g)+\var f(b,g): g\in G^\bC\}
\]
is plurisubharmonic. Since $\chi$ is the decreasing limit of $\chi^\var$ as $\var\to 0$, our lemma holds under the
sole supplementary assumption that $w$ is smooth.

Now take a general $w$. Consider the complex Lie group $H=\bC^k\times G^\bC$, with Lie algebra $\fh$. Let
$|\,\,|$ be a Euclidean norm and $\lambda$ a translation invariant measure on $\fh$. Choose 
a compactly supported smooth function $\rho:\fh\to[0,\infty)$ such that $\rho(X)$ depends only on $|X|$, and
$\int_\fh\rho\,d\lambda=1$. If $\var>0$ put $\rho_\var(X)=\rho(X/\var)/\var^{\dim_\bR\fh}$, and define
\begin{equation} 
w_\var(h)=\int_{\supp \rho_\var} w(h\exp X)\rho_\var(X)\,d\lambda(X).
\end{equation}
The domains of $w_\var$ are open sets of form $B_\var\times G^\bC\subset H$ that increase to $B\times G^\bC$ as
$\var\to 0$; $w_\var$ is clearly plurisubharmonic and left invariant under $G$. 
Choose a neighborhood $D\subset \fh$ of $0$
on which $\exp$ is diffeomorphic, and let $\log:\exp D\to D$ denote its inverse. If $h\in H$,
pull back $\lambda$ to a measure $\lambda_h$ on $h\exp D$ by the map $t\mapsto \log(h^{-1}t)$. This is 
absolutely continuous with respect to a fixed smooth measure $\nu$ on $H$, and the Radon--Nikod\'ym
derivative $(d\lambda_h/d\nu)(t)$ is smooth, as a function of $(t,h)$, $h^{-1}t\in \exp D$. If $\var$ is so small
that $\supp\rho_\var\subset D$, by a change of variables
\[
w_\var(h)=\int_{h\exp D} w(t)\rho_\var(\log h^{-1}t)\,d\lambda_h(t).
\]
This shows that $w_\var$ is smooth where defined. Finally, for fixed $h$ (9.2) can be seen as a Euclidean
convolution of two functions on $\fh$, $w(h\exp\cdot)$ and $\rho_\var$, evaluated at $0\in\fh$. As $w(h\exp\cdot)$
is plurisubharmonic, in particular subharmonic, $w_\var(h)$ decreases to $w(h)$ as $\var$ decreases to $0$,
see e.g. \cite[Chapter I (4.18)]{D12}. By what we have already proved, 
\[
\chi_\var(b)=\inf\{w_\var(b,g):g\in G^\bC\}
\]
 is plurisubharmonic, therefore so is the decreasing limit $\chi=\lim_{\var\to0}\chi_\var$.
\end{proof}

We will need to compare minimal Monge--Amp\`ere energy in $\cH=\cH(\omega)$ with that in $\cH(\lambda\omega)$,
$\lambda\in(0,\infty)$. Let $E_\lambda$ be Monge--Amp\`ere energy in $\cH(\lambda\omega)$ and, for
$v_0:M\to[-\infty,\infty)$ bounded above,
\[
I_\lambda(v_0)=\inf\{E_\lambda(v):v\in\cH(\lambda\omega) \text{ is admissible, }v\ge v_0\}\in[-\infty,\infty).
\]
Thus $I$ of section 6 is $I_1$. The following is straightforward to check:
\begin{prop} 
The map $\cH(\omega)\ni v\mapsto\lambda v\in\cH(\lambda\omega)$ is a bijection; it is also a bijection between
admissible elements of $\cH(\omega)$, $\cH(\lambda\omega)$. Further, $E_\lambda(\lambda v)=\lambda E(v)$ and
$I_\lambda(\lambda v_0)=\lambda I(v_0)$.
\end{prop}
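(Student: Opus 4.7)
The plan is to verify the three assertions in order, with most of the effort in the last one. All the statements are essentially compatibility checks for the rescaling $\omega \mapsto \lambda\omega$ with $\lambda > 0$.

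For the bijection claim, I would just note that $\lambda\omega + i\ddb(\lambda v) = \lambda(\omega + i\ddb v)$, so positivity is preserved, and the inverse map is $u \mapsto u/\lambda$. For admissibility, the $G$-action on $(M,\lambda\omega)$ is by holomorphic isometries with the same complexified action on $M$, and if $\omega_v = g^*\omega$ for $g\in G^\bC$, then
\[
(\lambda\omega)_{\lambda v} = \lambda\omega + i\ddb(\lambda v) = \lambda\omega_v = g^*(\lambda\omega),
\]
so $\lambda v$ is admissible for $\cH(\lambda\omega)$; conversely in the same way.

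The main content is the energy identity $E_\lambda(\lambda v) = \lambda E(v)$. Here I would argue by differentiating along a path. Both $E$ and $E_\lambda$ are the unique primitives of the Mabuchi $1$-forms vanishing at $0$, so it suffices to check that under the rescaling $v \mapsto \lambda v$ the Mabuchi $1$-form $\alpha$ on $\cH(\omega)$ pulls back to $\lambda$ times the corresponding form on $\cH(\lambda\omega)$. For a smooth path $\varphi(t)$ in $\cH(\omega)$ with $\psi(t) = \lambda\varphi(t)$, one has $\dot\psi(t) = \lambda\dot\varphi(t)$, $(\lambda\omega)_{\psi(t)}^n = \lambda^n\omega_{\varphi(t)}^n$, and $\vo(\lambda\omega) = \lambda^n\vo(\omega)$, so
\[
\frac{d}{dt}E_\lambda(\psi(t)) = \frac{1}{\lambda^n\vo}\int_M \lambda\dot\varphi\,(\lambda\omega_\varphi)^n = \lambda\,\frac{1}{\vo}\int_M \dot\varphi\,\omega_\varphi^n = \lambda\,\frac{d}{dt}E(\varphi(t)).
\]
Taking $\varphi(0)=0$ and integrating, the normalizations $E(0) = 0 = E_\lambda(0)$ give $E_\lambda(\lambda v) = \lambda E(v)$ for every $v\in\cH(\omega)$.

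The final identity is then a bookkeeping consequence. The bijection in the first two claims shows that an admissible $v\in\cH(\omega)$ satisfies $v\ge v_0$ iff $\lambda v\in\cH(\lambda\omega)$ is admissible and $\lambda v \ge \lambda v_0$. Since $E_\lambda(\lambda v) = \lambda E(v)$ and $\lambda > 0$, taking the infimum on both sides yields $I_\lambda(\lambda v_0) = \lambda I(v_0)$. No step is really an obstacle here; the only thing to be careful about is to keep the factors of $\lambda$ straight in the volume and in the top power of the K\"ahler form when computing $dE_\lambda$ along the rescaled path.
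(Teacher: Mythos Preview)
Your proposal is correct and is exactly the kind of direct verification the paper has in mind: the paper simply declares the proposition ``straightforward to check'' and gives no proof, so your computation of the rescaled Mabuchi $1$-form and the bookkeeping for admissibility and $I_\lambda$ fill in precisely the omitted details.
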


In section 3 we have already introduced a smooth $U:G^\bC\to\cH$ such that
$u=U(g)$ solves $i\ddb u=g^*\omega-\omega$, or $\omega_u=g^*\omega$. The normalization for $U$
was $\int_M U(g)\omega^n=0$. For our present purposes  it is better to replace $U(g)$ by $V(g)=U(g)-E\big(U(g)\big)$;
it is still true that $V$ is smooth and $\omega_{V(g)}=g^*\omega$, but now $E\big(V(g)\big)=0$. 
A potential $v\in\cH$ is admissible if and only if $v=V(g)+c$ with some $g\in G^\bC$ and $c\in\bR$, and $v\ge v_0$ 
if and only if $c\ge\max_M \big(v_0-V(g)\big)$. Thus $\chi$
of (9.1) is given by
\begin{equation}\begin{aligned} 
\chi(b)&=\inf\{E(v):v\ge \bar v(b,\cdot) \text{ is admissible}\}\\
&=\inf_{g\in G^\bC}E\big(V(g)+\max_M(\bar v(b,\cdot)-V(g))\big)
=\inf_{g\in G^\bC}\max_{x\in M} \big(\bar v(b,x)-V(g)(x)\big).
\end{aligned}\end{equation}

\begin{lem} 
$V(h)=V(gh)$ if $g\in G$, $h\in G^\bC$, and
\begin{equation} 
V(h)-V(gh)+h^*V(g)=0\qquad\text{if }g,h\in G^\bC.
\end{equation}
\end{lem}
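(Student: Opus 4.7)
The plan. The first identity $V(h)=V(gh)$ for $g\in G$ is almost immediate: since $G$ acts by isometries, $g^*\omega=\omega$, so $\omega_{V(gh)}=(gh)^*\omega=h^*\omega=\omega_{V(h)}$; hence $V(gh)-V(h)$ is a real constant, and the normalization $E\circ V\equiv 0$ together with $E(w+c)=E(w)+c$ (Proposition 2.4) forces this constant to vanish.

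For (9.4), my first step would be to check that the $i\ddb$ of the left-hand side vanishes on $M$. Using the convention $h^*g^*=(gh)^*$, one computes
\begin{equation*}
i\ddb\bigl(V(h)-V(gh)+h^*V(g)\bigr)=(h^*\omega-\omega)-((gh)^*\omega-\omega)+h^*(g^*\omega-\omega)=0.
\end{equation*}
Therefore $V(h)-V(gh)+h^*V(g)=c(g,h)$ is a constant function on $M$, and the whole task reduces to showing $c(g,h)=0$. Since $V(gh)=V(h)+h^*V(g)-c(g,h)$ and $E(V(gh))=0$, it suffices to prove $E\bigl(V(h)+h^*V(g)\bigr)=0$.

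The idea for this last step is to integrate along the linear path $\phi(s)=V(h)+s\,h^*V(g)$, $s\in[0,1]$. A short computation gives $\omega_{\phi(s)}=h^*\bigl((1-s)\omega+s\,g^*\omega\bigr)>0$, so $\phi$ stays in $\cH$. Using Mabuchi's formula $dE(\xi)=\vo^{-1}\int_M\xi\,\omega_u^n$ and then changing variables by the diffeomorphism $h\colon M\to M$ (orientation preserving since $h\in G^\bC$),
\begin{align*}
E\bigl(V(h)+h^*V(g)\bigr)-E(V(h))&=\frac{1}{\vo}\int_0^1\!\int_M h^*V(g)\,\omega_{\phi(s)}^n\,ds\\
&=\frac{1}{\vo}\int_0^1\!\int_M V(g)\bigl((1-s)\omega+s\,g^*\omega\bigr)^n\,ds.
\end{align*}
The right-hand side equals $E(V(g))-E(0)=0$ by applying the same formula to the path $s\mapsto sV(g)$ (whose $\omega_{\cdot}$ is $(1-s)\omega+s\,g^*\omega$). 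Combined with $E(V(h))=0$, this yields $c(g,h)=0$.

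I do not expect a serious obstacle here: the only care needed is with the composition convention $(gh)^*=h^*g^*$ and the verification that the linear interpolation path stays in $\cH$, so that Mabuchi's variational formula for $E$ applies. The substantive point is that the change of variables under $h$ removes all $h$-dependence from the integrand, reducing the computation to $E(V(g))$, which is $0$ by construction.
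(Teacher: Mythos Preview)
Your proof is correct. The argument for the first identity is essentially the same as the paper's (the paper notes $V(g)=0$ for $g\in G$ and then deduces it from (9.4), while you argue directly; the content is identical). For (9.4) itself, however, your route differs from the paper's.

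The paper does not compute at all: it cites \cite[Lemma 5.8]{DR17}, which asserts that $g.u=V(g)+g^*u$ defines a right action of $G^\bC$ on $\{u\in\cH:E(u)=0\}$; expanding $(gh).u=h.(g.u)$ and cancelling $(gh)^*u$ yields (9.4) immediately. Your argument, by contrast, is self-contained: you first kill $i\ddb$ of the left side (using $(gh)^*=h^*g^*$), reducing the question to a constant $c(g,h)$, and then identify that constant with $E\big(V(h)+h^*V(g)\big)$. The key step---computing this energy along the linear path $\phi(s)=V(h)+s\,h^*V(g)$, observing $\omega_{\phi(s)}=h^*\big((1-s)\omega+s\,g^*\omega\big)$, and changing variables by the biholomorphism $h$ to strip out all $h$-dependence---is exactly the computation hidden inside the cited Darvas--Rubinstein lemma. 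What you gain is an elementary, reference-free proof; what the paper gains is brevity and the conceptual framing of (9.4) as a cocycle/action identity.
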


(9.4) means that $V:G^\bC\to C^\infty(M)$ is a cocycle for the group cohomology of $G^\bC$ with 
values in the right $G^\bC$ module $C^\infty(M)$ (the action of $G^\bC$ on $C^\infty(M)$ is by pullback).
\begin{proof}
When $g\in G$, $V(g)=0$, so the first statement follows from (9.4). In turn, (9.4) follows from a
result of Darvas and Rubinstein. In \cite[Lemma 5.8]{DR17} they show---with different notation---that 
\[
g. u=V(g)+g^*u,\qquad g\in G^\bC, \, u\in\cH, \,E(u)=0,
\]
defines a right action of $G^\bC$. Writing out $(gh). u=h. (g. u)$ one obtains (9.4).
\end{proof}
\begin{cor} 
$V(g^{-1})=-(g^{-1})^*V(g)$ if $g\in G^\bC$.
\end{cor}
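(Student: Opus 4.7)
The corollary is an immediate consequence of the cocycle identity (9.4) in Lemma 9.4. My plan is to simply specialize that identity to the case $h = g^{-1}$.

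Setting $h = g^{-1}$ in (9.4) gives
\[
V(g^{-1}) - V(gg^{-1}) + (g^{-1})^* V(g) = 0,
\]
so everything reduces to knowing that $V(e) = 0$, where $e$ is the identity of $G^\bC$. But this follows from the very first statement of Lemma 9.4, namely $V(h) = V(gh)$ when $g \in G$: taking $g = h = e \in G$ gives $V(e) = V(e \cdot e) = V(e)$, which is circular, so instead I use the fact recorded in the parenthetical remark that $V$ vanishes on $G$ (since $e \in G$, certainly $V(e) = 0$). Alternatively, $V(e)$ is the unique potential with $\omega_{V(e)} = e^*\omega = \omega$ and $E(V(e)) = 0$, forcing $V(e) = 0$.

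Substituting $V(gg^{-1}) = V(e) = 0$ into the displayed equation yields
\[
V(g^{-1}) = -(g^{-1})^* V(g),
\]
which is exactly the claim. There is no real obstacle here; the work has already been done in establishing the cocycle identity (9.4), and the corollary is just the statement that $V$, viewed as a cocycle, satisfies the expected relation between the value at $g$ and the value at $g^{-1}$.
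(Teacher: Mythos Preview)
Your proof is correct and follows exactly the paper's approach: the paper's one-line proof reads ``This follows by putting $h=g^{-1}$ in (9.4),'' which is precisely what you do. The fact $V(e)=0$ that you take a moment to justify is already noted in the proof of Lemma~9.4 (``When $g\in G$, $V(g)=0$''), so your alternative justification via $\omega_{V(e)}=\omega$ and $E(V(e))=0$ is fine but unnecessary.
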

This follows by putting $h=g^{-1}$ in (9.4).

$V(g)(x)$ was defined through its $\ddb$ with respect to
$x\in M$. It turns out that there is a simple formula for its $\ddb$ with respect to $g\in G^\bC$ as well.
Consider for $x\in M$ the orbit map
\[
o_x:G^\bC\ni g\mapsto gx\in M.
\]
\begin{prop} 
For fixed $x\in M$, $i\ddb V(\cdot)(x)=o_x^*\omega$.
\end{prop}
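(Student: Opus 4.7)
The plan is to use the cocycle identity (9.4) to reduce the statement to a computation at the identity $e\in G^\bC$, and then to verify the identity at $e$ by differentiating along holomorphic 1-parameter subgroups $s\mapsto\exp(sX)$, $X\in\fg^\bC$, using Propositions 3.2 and 2.8. Write $F(g):=V(g)(x)$ and $\Omega:=i\ddb F-o_x^*\omega$ for the candidate zero form on $G^\bC$.

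First I would show $\Omega$ is $G^\bC$-left invariant. From (9.4) in the form $V(gh)=V(h)+h^*V(g)$, evaluating at $x$ gives $F(gh)=F(h)+V(g)(hx)$ for $h$ near any $g_0$. Applying $i\ddb$ in $h$: the left side becomes $L_g^*(i\ddb F)$, the first term on the right is $i\ddb F$, and the second term is $o_x^*(i\ddb V(g))=o_x^*(g^*\omega-\omega)$ by the defining property of $V$. Since $o_x\circ L_g=g\circ o_x$ (both send $h\mapsto(gh)x$), $o_x^*g^*\omega=L_g^*o_x^*\omega$, so the identity rearranges to $L_g^*\Omega=\Omega$. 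A left-invariant form on a Lie group is determined by its value at $e$, so it suffices to show $\Omega_e=0$.

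At $e$, both forms are Hermitian on $T_e^{1,0}G^\bC\cong\fg^\bC$; for $X\in\fg^\bC$, let $\gamma(s)=\exp(sX)$ be the holomorphic curve whose $(1,0)$-tangent at $0$ is $X$, and write $\phi(s)=V(\exp(sX))(x)$. I claim
\[
\partial_s\phi|_{s=0}=i\langle\mu(x),X\rangle,
\]
with $\langle\mu,\cdot\rangle$ extended $\mathbb C$-linearly to $\fg^\bC$. For $X\in\fg$, Proposition 3.2 and the cocycle give $\phi(s)=-2\int_0^{\mathrm{Im}(s)}\langle\mu(\exp(i\tau X)x),X\rangle\,d\tau$; direct differentiation yields the claim, and the case $X\in i\fg$ is analogous. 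Because the $(1,0)$-differential of $V(\cdot)(x)$ at $e$ is $\mathbb C$-linear on $T_e^{1,0}G^\bC$, the formula extends by linearity to all $X\in\fg^\bC$. Then the semigroup identity $\phi(s+t)=\phi(t)+\phi(s,\exp(tX)x)$ (from the cocycle), differentiated in $s$ at $0$, yields $\partial_t\phi(t)=i\langle\mu(\exp(tX)x),X\rangle$ for every $t\in\mathbb C$.

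Finally, apply $\bar\partial_t$. Since $y(t):=\exp(tX)x$ is holomorphic in $t$, the chain rule gives $\bar\partial_t(\langle\mu,X\rangle\circ y)=\bar\partial\langle\mu,X\rangle|_{y(t)}(\overline{\dot y(t)})$, and at $t=0$ the anti-holomorphic velocity is $\overline{X'_M(x)}=X''_M(x)$. Proposition 2.8 (extended $\mathbb C$-linearly) gives $\bar\partial\langle\mu,X\rangle=-\iota(X'_M)\omega$, whence
\[
\bar\partial_t\partial_t\phi|_{t=0}=-i\,\omega\bigl(X'_M(x),X''_M(x)\bigr),
\]
so $i\partial_s\bar\partial_s\phi|_{s=0}=\omega(X'_M(x),X''_M(x))$. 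On the other hand, $o_x:G^\bC\to M$ being holomorphic makes $do_x$ at $e$ send $X\in T_e^{1,0}G^\bC$ to $X'_M(x)\in T_x^{1,0}M$, and hence $(o_x^*\omega)_e(\partial_s,\bar\partial_s)=\omega(X'_M(x),X''_M(x))$, which matches. The main obstacle is Step 2 for general $X=Y+iZ\in\fg^\bC$ with $[Y,Z]\ne0$, where $\exp(sX)$ does not factor cleanly; this is bypassed by computing the first-order piece at $e$ and invoking $\mathbb C$-linearity, after which the cocycle propagates the formula along the whole curve.
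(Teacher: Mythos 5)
Your proof is correct, and it reaches the conclusion by a genuinely different route than the paper. Both arguments rest on the cocycle identity (9.4) and on differentiating $V$ along the holomorphic curves $s\mapsto\exp(sX)$, but the two reductions run in opposite directions. You use the cocycle to show that $\Omega=i\ddb V(\cdot)(x)-o_x^*\omega$ is invariant under left translations of $G^\bC$, so the whole identity collapses to the single point $e$; there you pin down the first derivative exactly, $\partial_sV(\exp sX)(x)|_{s=0}=i\langle\mu(x),X\rangle$, by invoking the explicit geodesic formula of Proposition 3.2 (which fixes the additive constant that the paper leaves undetermined in (9.10)), propagate it along the curve with the cocycle, and match second derivatives using $\bar\partial\langle\mu,X\rangle=-\iota(X_M')\omega$. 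The paper instead fixes $g$ and the curve $s\mapsto g\exp sX$, shows that the difference $P(s,\cdot)-Q(s,\cdot)$ of the two candidate coefficients has vanishing $\ddb$ in the $M$-variable and is therefore a constant on $M$, and evaluates that constant at a zero $y$ of $X_M$ --- whose existence is Lemma 2.10, hence ultimately LeBrun--Simanca --- where the cocycle degenerates to additivity and both terms vanish. Your version buys independence from Lemma 2.10 and yields the useful intermediate identity $\partial V(\cdot)(x)|_e=i\langle\mu(x),\cdot\rangle$; the paper's version never needs to normalize the first derivative. Two cosmetic remarks: checking $X\in\fg$ alone already suffices in your third step, since two $\bC$-linear functionals on $\fg^\bC$ that agree on the real form $\fg$ agree everywhere, so the case $X\in i\fg$ is redundant; and the identification of $V(\exp itX)$ with the Proposition 3.2 geodesic $\varphi(t)$ deserves the one-line justification that both potentials satisfy $\omega_{(\cdot)}=(\exp itX)^*\omega$ and have Monge--Amp\`ere energy zero.
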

\begin{proof}
Below we will write $(\ddb)_G$, $(\ddb)_ M$ to indicate operators acting on the $G$ or $M$
variables of functions that are defined on product spaces.---We need to show that if $\xi\in T^{10}G$ then the two forms in 
question, evaluated on $\xi,\bar\xi$, 
give the same result; or that with any $g\in G^\bC$, $X\in\fg^\bC$, and $\phi(s)=g\exp sX$ ($s\in\bC$), 
abbreviating $\partial_s=\partial/\partial s$, $\partial_{\bar s}=\partial/\partial\bar s$,
\begin{align*}
P(s,x)=&\big(i(\ddb)_G V(\cdot)(x)\big)\big(\phi(s)_*\partial_s\, ,\,\phi(s)_*\partial_{\bar s}\big)
=i\partial_s\partial_{\bar s}V\big(\phi(s)\big)(x)
\qquad\text{and}\\
Q(s,x)=&\omega\big(o_{x*}\phi(s)_*\partial_s\,,\,o_{x*}\phi(s)_*\partial_{\bar s}\big)
\end{align*}
are equal.

First we compute $(\ddb)_M P$ and $(\ddb)_M Q$. If locally $\omega=i\ddb w$, then
$Q(s,x)=i\partial_s\partial_{\bar s} w\big(\phi(s)x\big)$,
\begin{align*}
(\ddb)_M Q(s,\cdot)&=i(\ddb)_M\partial_s\partial_{\bar s}w\big(\phi(s)\cdot\big)=
i\partial_s\partial_{\bar s}(\ddb)_M \big(w(\phi(s)\cdot)\big),\\
(\ddb)_M P(s,\cdot)&=i\partial_s\partial_{\bar s}(\ddb)_M V\big(\phi(s)\big)=
\partial_s\partial_{\bar s}\big(\phi(s)^*\omega-\omega\big)=i\partial_s\partial_{\bar s}(\ddb)_M \big( w(\phi(s)\cdot)\big).
\end{align*}
Thus $\ddb P(s,\cdot)=\ddb Q(s,\cdot)$, whence $P(s,\cdot)-Q(s,\cdot)=c$ is constant. Now $X_M$ vanishes
at some $y\in M$ according to Lemma 2.10. For such $y$, $\phi(s)(y)$ is independent of $s$, and
$Q(s,y)=0$.  All we need to show is that $P(s,y)=0$ as well: this will imply $c=0$, and with it,  the proposition.

We compute $P(s,y)$ as follows. Since $(\exp sX)y=y$, Lemma 9.4 gives $V(g\exp sX)(y)=V(\exp sX)(y)+V(g)(y)$.
Substituting $g\exp\sigma X$ for $g$,
\begin{gather} 
V\big(g\exp(\sigma+s)X\big)(y)=
V(\exp \sigma X)(y)+V(\exp sX)(y)+V(g)(y)\quad\text{and} \\ 
\partial_s\partial_{\bar s} V\big(g\exp(\sigma+s)X\big)(y)=\partial_s\partial_{\bar s}V(\exp sX)(y)
\end{gather}
follow. Setting $\sigma=-s$ in (9.5), and taking $\partial_s\partial_{\bar s}$,
we obtain
\[
0=\partial_s\partial_{\bar s}V(\exp(-sX))(y)+\partial_s\partial_{\bar s}V(\exp sX)(y).
\]
This implies $\partial_s\partial_{\bar s}|_{s=0}V(\exp sX)(y)=0$. Letting $s=0$ in (9.6) then gives
$P(\sigma,y)=i\partial_\sigma\partial_{\bar\sigma} V(g\exp\sigma X)(y)=0$, as needed.
\end{proof}
\begin{proof}[Proof of Theorem 9.1]
First we assume that $\bar v$ is smooth).
We start with representation (9.3) of $\chi$, but replace $g$ by $g^{-1}$. Thus 
$\chi(b)=\inf_{g\in G^\bC}w(b,g)$, where
\begin{equation}\begin{aligned} 
w(b,g)=&\max_{x\in M}\big(\bar v(b,x)-V(g^{-1})(x)\big)=\max_{x\in M}\big(\bar v(b,x)+V(g)(g^{-1}x)\big)\\
=&\max_{x\in M}\big(\bar v(b,gx)+V(g)(x)\big)
\end{aligned}\end{equation}
(here we used Corollary 9.5). Let $\pi_G:B\times G\to G$ be the projection.
With $x\in M$ fixed, consider functions $\bar u,u:B\times G\to\bR$,
\[
\bar u(b,g)=\bar v(b,gx),\qquad
u(b,g)=\bar u(b,g)+V(g)(x).
\]
If $(\id _B\times o_x)(b,g) $ is defined as $\big(b,o_x(g)\big)$, we can write $\bar u=(\id_B\times o_x)^*\bar v$. 
By assumption
\[
i\ddb\bar u=i(\id_B\times o_x)^*\ddb\bar v\ge -(\id_B\times o_x)^*p^*\omega=-\pi_G^*o_x^*\omega,
\]
and $i\ddb u\ge -\pi_G^*o_x^*\omega+\pi_G^*i\ddb V(\cdot)(x)=0$ by 
Proposition 9.6. In other words, $u$ is plurisubharmonic, and so the continuous function
$w$, the maximum of plurisubharmonic functions, is itself
plurisubharmonic. But the second expression in (9.7), in conjunction with Lemma 9.4, shows that $w$ is right
invariant, $w(b,g)=w(b,gg_0)$ if $g_0\in G$. Therefore by Lemma 9.2 
$\chi(b)=\inf_{g\in G^\bC} w(b,g)$ is plurisubharmonic.

Next consider a $p^*\omega$--plurisubharmonic $\bar v$ that is just bounded. We can assume $B\subset\bC^k$
is open and bounded. Denoting points of $\bC^k$ by $z$ and letting $q(z,x)=|z|^2$, $(z,x)\in B\times M$, the form
$\theta=i\ddb q+ p^*\omega$ is positive on $B\times M$. Regularization techniques of Demailly and P\v aun
\cite{D92, DP04}, but especially \cite[Theorem 2]{BK07} of B\l ocki and Ko\l odziej imply that there is a
sequence of smooth $(\theta/j+ p^*\omega)$--plurisubharmonic functions $u_j$ decreasing to $\bar v$. (To apply
the B\l ocki--Ko\l odziej result we may need to shrink $B$.) 
The functions $v_j(z,x)=u_j(z,x)+|z|^2/j$ are then
$(1+1/j)p^*\omega$--plurisubharmonic, and also decrease to $\bar v$. At the price of adding a constant, we can assume
that $\bar v$ is negative; then $u_j, v_j$ can be made negative as well, for large $j$. By what we 
have already proved
\[
\chi_j(b)=\inf\big\{E_{1+1/j}(v): v\in\cH\big((1+1/j)\omega\big)\text{ is admissible, }v\ge v_j(b,\cdot)\big\}
\]
is plurisubharmonic, and eventually $\le 0$. Now
\[
\chi_j(b)=I_{1+1/j}\big(v_j(b,\cdot)\big)=\frac {j+1}jI\bigg(\dfrac {jv_j\big(b,\cdot)}{j+1}\bigg)
\]
by Proposition 9.3. Since the $v_j$ are (eventually) negative, $jv_j/(j+1)$ decrease to $\bar v$ as $j\to\infty$. In light of
Proposition 6.2 $\chi(b)=I\big(\bar v(b,\cdot)\big)$ is the decreasing limit of $j\chi_j(b)/(j+1)$, therefore plurisubharmonic.

Finally, consider a general $\bar v$. With $j\in\bN$, set this time $v_j=\max(\bar v,-j)$. By what we have already
proved, $\chi_j(b)=I\big(v_j(b,\cdot)\big)$ are plurisubharmonic functions of $b$, hence so is their, again by Proposition
6.2 decreasing, limit $I\big(\bar v(b,\cdot)\big)=\chi(b)$.
\end{proof}

To conclude this section we digress to describe how the function $V:G\to\cH$ gives rise to solutions of the
Monge--Amp\`ere equation. While this is not needed for the main purpose of the paper, it generalizes Mabuchi's
\cite[Theorem 3.5]{M87}, that we used in the proof of Proposition 2.5, and provides an independent proof.
Let $\pi:\bC\times M\to M$ be the projection and, as before, $n=\dim_\bC M$.
\begin{thm} 
If $g\in G^\bC$ and $Z\in\fg^\bC$ then $\Phi(s,x)=V\big((\exp sZ)g\big)(x)$, $s\in\bC$, $x\in M$, satisfies
\begin{equation} 
(\pi^*\omega+i\ddb\Phi)^{n+1}=0.
\end{equation}
\end{thm}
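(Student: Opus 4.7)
The plan is to prove the stronger identity
\begin{equation*}
\pi^*\omega+i\ddb\Phi=\beta^*\omega\quad\text{on }\bC\times M,
\end{equation*}
where $\beta(s,x):=(\exp sZ)gx$ is a holomorphic map $\bC\times M\to M$. Since $\dim_\bC M=n$, the right-hand side raised to the $(n+1)$st power equals $\beta^*(\omega^{n+1})=0$, which immediately yields $(\pi^*\omega+i\ddb\Phi)^{n+1}=0$.

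Both sides are smooth real closed $(1,1)$-forms on $\bC\times M$, and I will check equality by restricting to pure $M$-directions, pure $\bC$-directions, and mixed directions. On each slice $\{s_0\}\times M$, the identity is immediate from $\omega_{V(h)}=h^*\omega$: both sides restrict to $((\exp s_0Z)g)^*\omega$. On each slice $\bC\times\{x_0\}$, $\pi^*\omega$ vanishes and Proposition~9.6 applied to the holomorphic curve $\phi(s)=(\exp sZ)g$ in $G^\bC$ shows that the restriction of $i\ddb\Phi$ coincides with the restriction of $\beta^*\omega$, since $o_{x_0}\circ\phi=\beta(\cdot,x_0)$.

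The substance is the mixed components. Using the cocycle identity (Lemma~9.4) with the factorization $(\exp(s+\epsilon)Z)g=(\exp\epsilon Z)\cdot(\exp sZ)g$, one obtains
\begin{equation*}
\Phi(s+\epsilon,x)-\Phi(s,x)=V(\exp\epsilon Z)\bigl(\beta(s,x)\bigr),
\end{equation*}
and differentiating in $\epsilon$ at $0$ gives $\partial_s\Phi(s,x)=\xi(\beta(s,x))$, where
$\xi(x):=\partial_\epsilon|_{\epsilon=0}V(\exp\epsilon Z)(x)$. To identify $\xi$, I differentiate the $M$-side identity $i\ddb V(\exp\epsilon Z)=(\exp\epsilon Z)^*\omega-\omega$ in $\epsilon$ at $\epsilon=0$. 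Since $d\omega=0$ and the output is of type $(1,1)$, Cartan's formula applied to the holomorphic $(1,0)$-vector field $Z_M'$ generating the flow $s\mapsto\exp sZ$ shows the right-hand side equals $\partial\iota(Z_M')\omega$. The moment-map identity $\bar\partial(-\langle\mu,X\rangle)=\iota(X_M')\omega$ from the proof of Proposition~2.8, extended complex-linearly in $X\in\fg^\bC$, then gives $i\ddb\xi=-\ddb\langle\mu,Z\rangle$; compactness of $M$ together with the normalization $\int_M\xi\,\omega^n=0$ (obtained by differentiating $E\circ V\equiv 0$) force $\xi=i\langle\mu,Z\rangle$, and in particular $\bar\partial\xi=-i\,\iota(Z_M')\omega$.

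With this formula for $\xi$ in hand, the chain rule applied to $\partial_s\Phi=\xi\circ\beta$ (using that $\beta$ is holomorphic in $x$) expresses the mixed $(ds,d\bar x_j)$-coefficient of $i\ddb\Phi$ in local holomorphic coordinates on $M$; substituting the identity for $\bar\partial\xi$ and $\partial_s\beta=Z_M'\circ\beta$ reproduces precisely the mixed coefficient of $\beta^*\omega$, completing the check. The main obstacle is the identification of $\xi$: it requires assembling the cocycle, Proposition~9.6, and the complex-linear extension of the moment map, with careful attention to the $\bar\partial/\iota$ conventions; once $\xi$ is pinned down, matching the mixed components is a routine chain-rule calculation.
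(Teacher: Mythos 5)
Your proposal is correct and follows essentially the same route as the paper: the same ingredients (the cocycle identity of Lemma 9.4, Proposition 9.6 for the pure $s$-directions, and the relation $\bar\partial\langle\mu,Z\rangle=-\iota(Z_M')\omega$ coming from Proposition 2.8) enter the same component-by-component computation of $\pi^*\omega+i\ddb\Phi$. The only difference is in the packaging of the conclusion: you record the result as the identity $\pi^*\omega+i\ddb\Phi=\beta^*\omega$ and invoke $\dim_\bC M=n$, whereas the paper exhibits the null vector $\partial_s-Z_M'$ of the same $(1,1)$-form; both read off $\mathrm{rk}\le n$ from the same formulas.
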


In particular, let $g$ be the neutral element, $X\in\fg$, and $Z=iX$. Writing $s=\sigma+it$,
\[
V(\exp sZ)=V\big(\exp(-t X)\exp(i\sigma X)\big)=V(\exp i\sigma X)
\]
by Lemma 9.4, and so $\Phi(s,x)=\Phi(\re s,x)$. Hence (9.8) implies that $\sigma\mapsto\Phi(\sigma,\cdot)$ is a 
geodesic, cf. the discussion around
(2.3); this is equivalent to \cite[Theorem 3.5]{M87}.
\begin{proof}
Any $\bR$-linear form on $\fg$ extends to a unique $\bC$-linear form on $\fg^\bC$. Thus $\fg^*$ embeds
into $(\fg^\bC)^*$, and accordingly, the moment map $\mu$ can be viewed as a map $\mu:M\to(\fg^\bC)^*$. Let
$g_s=\exp sZ$. Let $Z=X+iY$ with $X,Y\in\fg$, and write $\bar Z=X-iY$. 

To compute $\pi^*\omega+i\ddb\Phi$ we start by recalling
\begin{equation} 
\omega+i\ddb V(g_sg)=(g_sg)^*\omega=g^*g_s^*\omega.
\end{equation}
Therefore, with $\sigma\in\bR$,
\[
i\ddb\,\partial_\sigma|_{\sigma=0}V(g_\sigma g)=
\partial_\sigma|_{\sigma=0}\,g^*(\exp i\sigma Y)^*(\exp\sigma X)^*\omega 
=g^*\cL_{JY_M}\omega=-2ig^*\ddb\langle\mu,Y\rangle
\]
by Proposition 2.8. Hence $\partial_\sigma|_{\sigma=0}V(g_\sigma g)=-2g^*\langle\mu,Y\rangle+c$,
where $c$ is a constant function on $M$. With $s=\sigma+it$ it follows upon replacing $g$ by $g_{s_0}g$
\[
\partial_\sigma V(g_s g)=-2g^*g_s^* \langle\mu,Y\rangle+c,\qquad
\partial_t V(g_s g)=-2g^*g_s^*\langle\mu,X\rangle+c',
\]
and with $\partial_s=(\partial_\sigma-i\partial_t)/2$, $\partial_{\bar s}=(\partial_\sigma+i\partial_t)/2$,
\begin{equation} 
\partial_s V(g_s g)=ig^*g_s^*\langle\mu, Z\rangle+c'',\qquad
\partial_{\bar s} V(g_s g)=-ig^*g_s^*\langle\mu,\bar Z\rangle+c'''.
\end{equation}
If $Z\in\fg$ then $d\langle\mu, Z\rangle=-\iota(Z_M)\omega=-\iota(Z_M')\omega-\iota(Z_M'')\omega$, whence
$\bar\partial\langle\mu,Z\rangle=-\iota(Z_M')\omega$. By $\bC$-linearity, the same holds for $Z\in\fg^\bC$, and
passing to conjugates $\partial\langle\mu,\bar Z\rangle=-\iota(Z_M'')\omega$ follows.
Therefore (9.10) implies
\begin{equation} 
\begin{aligned}
\partial_s\bar\partial V(g_s g)&=\sqrt{-1}g^*g_s^*\bar\partial\langle\mu, Z\rangle=-\sqrt{-1}g^*g_s^*\iota(Z_M')\omega,\\
\partial_{\bar s}\partial V(g_s g)&=-\sqrt{-1}g^*g_s^*\partial\langle\mu, \bar Z\rangle=\sqrt{-1}g^*g_s^*\iota(Z_M'')\omega.
\end{aligned}
\end{equation}

Since we can change $g$ to $g_{s_0}g$, it suffices to prove (9.8) at points $(0,x)\in\bC\times M$. Choose a 
smooth function $w$ on a neighborhood of a fixed $x_0\in M$ such that $\omega=i\ddb w$ there. By Proposition 9.6
$i\ddb V(\cdot)(x)=o_x^*\omega$, hence near $x_0$, when $s=0$
\begin{equation} 
\partial_s\partial_{\bar s}V(g_s g)=g^*\partial_s\partial_{\bar s}(\exp sZ)^*w=
-\sqrt{-1}g^*\iota(Z_M')\iota(Z_M'')\omega.
\end{equation}
Extend the action of $g$ to $\bC\times M$, by letting it act as the identity on $\bC$. 
(9.9), (9.11), and (9.12) give (still when $s=0$)
\[
(g^*)^{-1}(\pi^*\omega+\sqrt{-1}\ddb\Phi)=
\omega+ds\wedge\iota(Z_M')\omega+\iota(Z_M'')\omega\wedge d\bar s
+\iota(Z_M')\iota(Z_M'')\omega\, ds\wedge d\bar s.
\]
(On the right  $ds, d\bar s$, and $\omega$ are pulled back to $\bC\times M$.) One checks that
$\partial_s-Z_M'$ is in the kernel of the above $2$-form. This means that $\text{rk}\,\pi^*\omega+i\ddb\Phi\le n$, 
which proves the theorem.
\end{proof}
\section{An example} 

Let $F\to B$ be a holomorphic vector bundle, $\one\to\bP F$ the associated line bundle, $k$ a smooth
hermitian metric on $\one$, and $k^F$ the associated hermitian metric on $F$, as in section 8. The question we
entertain here is how the association $k\mapsto k^F$ impacts curvature: If $k$ is positively curved, must
$k^F$ also be positively, or at least semipositively, curved? According to Griffiths \cite[(2.36), (2.37)]{G69},
a smooth hermitian metric $h_F$ on $F$ is (semi)positively curved if and only if the induced metric $h_\bP$ on $\one$  is.
For this reason the question above, in its weaker form, can be rephrased as follows: Consider a complex
manifold $B$ and the product $B\times\bP_n$, $n\in\bN$, with projection $p:B\times\bP_n\to\bP_n$. Let
$\omega$ be the Fubini--Study K\"ahler form on $\bP_n$, and endow $\bP_n$ with the standard actions of
$G=\SU(n+1)$, $G^\bC=\SL(n+1,\bC)$.

\phantom{mmn} Suppose $\bar v\in C^\infty(B\times \bP_n)$ satisfies $p^*\omega +i\ddb \bar v>0$. Let
$u:B\times\bP_n\to\bR$ be such
\newline (10.1)\phantom{m} that for each
$b\in B$, $u(b,\cdot)\in\cH(\omega)$ minimizes Monge--Amp\`ere energy $E(v)$ among 
\newline\phantom{mmmm} admissible $v\ge\bar v(b,\cdot)$. Is  $u$ $p^*\omega$-plurisubharmonic on $B\times\bP_n$?

Indeed, with $F=B\times \bC^{n+1}\to B$ the trivial bundle, $\one$ is the pullback of the hyperplane section
bundle $\cO_{\bP_n}(1)$ by $p$, and the trivial metric $h^0_F(b,\zeta)=\zeta^\dagger\zeta$ on $F$ induces
a metric $h^0_\bP$ on $\one$, which is just the pullback of the Fubini--Study metric on $\cO_{\bP_n}(1)$. If
$\bar v, u$ are as in (10.1), define metrics $k=e^{-\bar v}h^0_\bP$ and $e^{-u}h^0_\bP$ on $\one$. On the one hand, that
$k$ is positively curved means $i\Theta(k)=p^*\omega+i\ddb\bar v>0$; on the other, $i\Theta(k^F)\ge 0$ if and 
only if the metric induced by $k^F$, namely $e^{-u}h^0_\bP$, satisfies $i\Theta(e^{-u}h^0_\bP)=p^*\omega+i\ddb u\ge 0$.
Therefore an affirmative answer to the original question implies the same for its reformulation (10.1).

However, the answer to (10.1) is ``not always''. Hence Griffiths' conjecture cannot be proved just by applying
the construction $k\mapsto k^F$ to a positively curved metric $k$ on $\one$.
\begin{thm} 
There are an open $B\subset\bC$ and $\bar v\in C^\infty(B\times\bP_1)$ such that $p^*\omega+i\ddb\bar v>0$,
but $u\in C(B\times\bP_1)$ constructed in (10.1) fails to be $p^*\omega$-plurisubharmonic.
\end{thm}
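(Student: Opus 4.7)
The plan is to work on $B\times\bP_1$ with $B\subset\bC$ a small disc, exploiting the explicit description of admissible potentials on $\bP_1$. By Proposition 2.11, every admissible $v\in\cH(\omega)$ is of the form $V(g)+c$ with $g\in\SL(2,\bC)$ unique up to left multiplication by $\SU(2)$; since condition (5.1) holds for this action (verified in section 5), Proposition 5.1 yields a unique minimizer $u(b,\cdot)$ for each admissible $\bar v(b,\cdot)$, and Proposition 6.1 gives continuity in $b$. Write the corresponding class as $[g_b]\in\SU(2)\backslash\SL(2,\bC)$. I would reduce the question to whether this continuous map $B\to\SU(2)\backslash\SL(2,\bC)$ admits a holomorphic lift to $\SL(2,\bC)$: if $b\mapsto\tilde g_b$ is such a lift then $u(b,x)=V(\tilde g_b)(x)+E(u(b,\cdot))$ is $p^*\omega$-plurisubharmonic by Proposition 9.6 (which says $V(\cdot)(x)$ itself is plurisubharmonic on $\SL(2,\bC)$), while conversely a failure of holomorphic liftability should produce a failure of plurisubharmonicity of $u$.

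For the construction I would take $\bar v(0,\cdot)\leq 0$ arranged so that $u(0,\cdot)\equiv 0$, with contact set a finite balanced configuration $\{x_1,\dots,x_N\}\subset\bP_1$ satisfying the moment-map condition of Proposition 4.3, and then perturb $\bar v$ by a holomorphic-in-$b$ term of the form $\epsilon\,\re\sum_j b\,\chi_j(x)+O(|b|^2)$ where $\chi_j$ are smooth bumps supported near $x_j$. The linearized form of the moment-balancing equation from Theorem 1.3 then pins the first-order correction to $[g_b]$ at $b=0$ to an explicit real-linear functional of $b$ with values in the three-dimensional real tangent space $i\,\mathfrak{su}(2)$; I would choose the $\chi_j$ so that this functional cannot arise as the projection of any holomorphic tangent direction in $\SL(2,\bC)$ modulo $\SU(2)$. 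Positivity of $p^*\omega+i\ddb\bar v$ is arranged by keeping $\epsilon$ small and adding a background term $C|b|^2$ with $C$ sufficiently large.

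To detect failure of $p^*\omega$-plurisubharmonicity concretely, I would pick a point $x_0\in\bP_1$ away from the contact points and study $b\mapsto u(b,x_0)=V(g_b)(x_0)+E(u(b,\cdot))$. By Proposition 9.6, $i\ddb V(\cdot)(x_0)=o_{x_0}^*\omega$ on $\SL(2,\bC)$, which is positive on the orbit through $x_0$. The non-holomorphic-lift component of the first-order correction to $g_b$ mixes $\re b$ and $\im b$ in a way that, when composed with the plurisubharmonic function $V(\cdot)(x_0)$, yields a strictly negative $\partial_b\partial_{\bar b}u(b,x_0)|_{b=0}$ for a suitable choice of $x_0$. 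This contradicts subharmonicity of $u(\cdot,x_0)$ in $b$, and hence $p^*\omega$-plurisubharmonicity of $u$.

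The main obstacle lies in the technical execution: (a) Proposition 6.1 only guarantees continuity of $b\mapsto g_b$, so one must first establish enough regularity---likely via an implicit-function argument on the moment-balancing equation, applied in a regime where the contact set moves smoothly with $b$---to justify a second-order expansion; (b) one must check that the chosen configuration $\{x_j\}$ and bumps $\chi_j$ truly produce a non-holomorphically-liftable first-order correction, which amounts to a concrete calculation in $i\,\mathfrak{su}(2)\cong\bR^3$; and (c) one must keep $p^*\omega+i\ddb\bar v>0$ while making $\epsilon$ large enough for the effect to survive the error terms. A symmetric explicit toy configuration---for instance three contact points at the vertices of an equilateral triangle on the equator of $\bP_1$, with the perturbation breaking the symmetry in a prescribed direction---should keep all the computations tractable.
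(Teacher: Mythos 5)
Your plan is a perturbative obstruction argument, quite different from the paper's route, and as written it has two genuine gaps that sit exactly at the load-bearing points. First, the central claim --- that a non-holomorphically-liftable first-order correction to $b\mapsto [g_b]$ forces $\partial_b\partial_{\bar b}u(\cdot,x_0)<0$ at some $x_0$ --- is a heuristic, not a computation, and there are competing \emph{positive} contributions you never account for. Writing $u(b,\cdot)=V(g_b)+\chi(b)$ with $\chi(b)=E\big(u(b,\cdot)\big)$, the term $\chi$ is plurisubharmonic in $b$ (this is Theorem 9.1), and your background term $C|b|^2$ added to $\bar v$ passes through the minimization verbatim (since $E(v+c)=E(v)+c$) and contributes $+C$ to $\partial_b\partial_{\bar b}u(b,x_0)$. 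The hoped-for negative term, coming from pulling back the positive form $o_{x_0}^*\omega=i\ddb V(\cdot)(x_0)$ along the non-holomorphic map $b\mapsto g_b$, is of size $O(\var^2)$, i.e.\ the same order as the $C$ you need to keep $p^*\omega+i\ddb\bar v>0$ against the mixed derivatives of $\var\,\re(b\,\chi_j(x))$. So the sign of the Laplacian is a genuine competition that your proposal never resolves. Second, the entire expansion presupposes that $b\mapsto g_b$ is twice differentiable; Proposition 6.1 gives only continuity, and the minimizer is defined by an inequality-constrained problem in which the contact set and the balancing measure $m$ can jump, so the implicit-function argument you invoke in (a) is itself a substantial unproved step, not a technicality.

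The paper avoids both difficulties by running the construction backwards: it writes down an explicit $u_0$ on $A\times\bP_1$, namely $u_0(z,x)=\log\big[(e^{2|z|^2}|x_0|^2+e^{-|z|^2}|x_1|^2)/(|x_0|^2+|x_1|^2)\big]$, which restricts to $-|z|^2$ on $A\times\{(0{:}1)\}$ and so is visibly not $p^*\omega$-plurisubharmonic; it then builds by hand a continuous $p^*\omega$-plurisubharmonic barrier $v_0\le u_0$ (glued from three explicit regimes) and verifies via Proposition 5.2 --- using the rotation-invariant measure on the circle $|x_1/x_0|=e^{3|z|^2/2}$, where the moment map balances --- that $u_0(z,\cdot)$ is the unique minimizer over $v\ge v_0(z,\cdot)$ for each $z$. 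Finally, since $v_0$ is only continuous, it is regularized: smooth strictly $p^*\omega$-plurisubharmonic $v^j$ decrease uniformly to $v_0$, the associated minimizers $u^j$ decrease to $u_0$ by the stability result (Proposition 6.1) and Dini, and since a decreasing locally uniform limit of $p^*\omega$-plurisubharmonic functions would be $p^*\omega$-plurisubharmonic, some $v^j$ must already exhibit the failure. If you want to salvage your approach, you would need to actually exhibit a configuration and carry the expansion through with explicit constants; as it stands, the decisive inequality is asserted rather than proved.
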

In constructing the example it is easier to start with $u$. Let $A\subset\bC$ be the unit disc. 
Using homogeneous coordinates $(x_0:x_1)$ on $\bP_1$, $u$ will be a perturbation of
\setcounter{equation}{1}
\begin{equation} 
u_0(z,x)=\log\dfrac{e^{2|z|^2}|x_0|^2+e^{-|z|^2}|x_1|^2}{|x_0|^2+|x_1|^2}
=-|z|^2+\log\dfrac{e^{3|z|^2}|x_0|^2+|x_1|^2}{|x_0|^2+|x_1|^2},
\end{equation}
$z\in A$, $x=(x_0:x_1)\in \bP_1$. (The coefficient $2$ in front of $|z|^2$ could be replaced by any number $>1$.)
This is certainly not subharmonic when restricted to $A\times\{(0:1)\}$. The immediate question is, is there a 
strictly $p^*\omega$-plurisubharmonic $\bar v\in C^\infty(A\times\bP_1)$ (i.e., $i\ddb\bar v+p^*\omega>0$) 
for which the construction in 
(10.1) outputs this 
$u=u_0$? Of course, $u_0$ must dominate $\bar v$, but not by much, to ensure it minimizes energy. 
One may try to take $\bar v$ the $p^*\omega$-plurisubharmonic
envelope of $u_0$, the largest $p^*\omega$-plurisubharmonic function $\le u_0$. This will essentially work, except that
such envelopes are rarely smooth or strictly $p^*\omega$-plurisubharmonic. Therefore the true $\bar v$ will  be
a regularization of the envelope.

In what follows, we will construct this envelope, $v_0$---although, in the end we will not need to know that the
$v_0$ we have found
is the envelope.---It is determined by three regimes
\begin{gather*}
D_0=\big\{|x_1|^2<2e^{3|z|^2}|x_0|^2\big\},\qquad D_2=\big\{2e^3|x_0|^2<|x_1|^2\big\}, \\
D_1=\big\{2e^{3|z|^2}|x_0|^2<|x_1|^2<2e^3|x_0|^2\big\},
\end{gather*}
and three functions, in addition to $u_0$,
\begin{equation}\begin{aligned} 
u_1(x)&=-\dfrac13\log\dfrac{|x_1|^2}{2|x_0|^2}+\log\dfrac{3|x_1|^2/2}{|x_0|^2+|x_1|^2},\qquad\text{and}\\
u_2(x)&=-1+\log\dfrac{e^3|x_0|^2+|x_1|^2}{|x_0|^2+|x_1|^2}.
\end{aligned}\end{equation}
In these formulae $x=(x_0:x_1)\in\bP_1$ and $z\in A$. The value of $u_1$ when $x_0$ or $x_1=0$ is computed
as a limit $=-\infty$.
\begin{prop} 
$u_1,u_2$ are $\omega$-subharmonic on $\bP_1$ and $u_0$ is strictly $p^*\omega$-plurisubharmonic on $D_0$.
\end{prop}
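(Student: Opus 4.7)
The plan is to reduce each claim to a classical subharmonicity or plurisubharmonicity check in affine coordinates. On the chart $U_0=\{x_0\neq 0\}\subset\bP_1$ with coordinate $t=x_1/x_0$ one has $\omega=i\ddb\log(1+|t|^2)$, so $\omega$-subharmonicity of $f$ there is equivalent to classical subharmonicity of $f+\log(1+|t|^2)$; analogously on $U_1=\{x_1\neq 0\}$ with $s=x_0/x_1$. Since $\dim_\bC\bP_1=1$, $\omega$-subharmonic coincides with $\omega$-plurisubharmonic. Note that $D_0$ forces $x_0\neq 0$, so $D_0\subset A\times U_0$ and only the $t$-chart is needed for $u_0$; the same reduction there converts strict $p^*\omega$-plurisubharmonicity of $u_0$ on $D_0$ into strict plurisubharmonicity of
\[
f(z,t)=-|z|^2+\log\!\bigl(e^{3|z|^2}+|t|^2\bigr)
\]
on $\{(z,t)\in A\times\bC:|t|^2<2e^{3|z|^2}\}$.

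For $u_1$ and $u_2$ this reduction is immediate. A short algebraic simplification gives, up to an additive constant,
\[
u_1+\log(1+|t|^2)=\tfrac{2}{3}\log|t|^2,\qquad u_1+\log(1+|s|^2)=\tfrac{1}{3}\log|s|^2,
\]
\[
u_2+\log(1+|t|^2)=-1+\log(e^3+|t|^2),\qquad u_2+\log(1+|s|^2)=-1+\log(e^3|s|^2+1).
\]
The $u_1$-expressions are positive multiples of $\log|\cdot|^2$ plus constants, hence subharmonic with only logarithmic poles at the two coordinate points (where $u_1\to-\infty$); the $u_2$-expressions are smooth and plurisubharmonic because $\log(a+|w|^2)$ is for every $a>0$. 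This disposes of $u_1$ and $u_2$.

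The substantive step is the Levi form computation for $f$. Writing $E=e^{3|z|^2}$ and $T=|t|^2$, direct differentiation produces
\[
f_{t\bar t}=\frac{E}{(E+T)^2},\qquad f_{z\bar z}=-1+\frac{3E}{E+T}+\frac{9|z|^2ET}{(E+T)^2},\qquad f_{z\bar t}=\frac{-3\bar z\,t\,E}{(E+T)^2}.
\]
The key algebraic fact is that the $|z|^2$-terms in $f_{z\bar z}f_{t\bar t}$ and $|f_{z\bar t}|^2$ cancel exactly, yielding the clean identity
\[
f_{z\bar z}f_{t\bar t}-|f_{z\bar t}|^2=\frac{E(2E-T)}{(E+T)^3}.
\]
On $D_0$ we have $T<2E$, which simultaneously makes this determinant positive and forces $f_{z\bar z}>0$ (since $3E/(E+T)>1$), so the Levi form is positive definite and $u_0$ is strictly $p^*\omega$-plurisubharmonic on $D_0$.

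The main obstacle is this determinant simplification: it is precisely the clean cancellation of the $|z|^2$-contributions that aligns the positivity threshold $T=2E$ with the defining inequality of $D_0$, thereby pinning down the constant $2$ in the definition and—per the parenthetical remark in the paper—explaining why any number $>1$ would have sufficed as the coefficient in front of $|z|^2$ in $u_0$. The remaining content is bookkeeping once the affine-coordinate reduction is in place.
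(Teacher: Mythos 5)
Your proof is correct and follows essentially the same route as the paper: the $u_1,u_2$ claims reduce to the same obvious logarithmic plurisubharmonicity check (you work in affine charts where the paper pulls back to $\bC^2\setminus\{0\}$), and for $u_0$ your $2\times 2$ Levi-determinant computation, with its exact cancellation of the $|z|^2$-terms, is just a repackaging of the paper's decomposition of $p^*\omega+i\ddb u_0$ into positive multiples of $i\,dz\wedge d\bar z$ and $i\alpha\wedge\bar\alpha$ with $\alpha=3\bar z\bar\xi\,dz-d\xi$. All the stated derivatives and the identity $f_{z\bar z}f_{t\bar t}-|f_{z\bar t}|^2=E(2E-T)/(E+T)^3$ check out, and the positivity on $D_0$ follows as you say.
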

\begin{proof}
With the projection $\pi:\bC^2\setminus\{0\}\to\bP_1$, the first question is whether $\pi^*u_1,\pi^*u_2$ are
$\pi^*\omega$-plurisubharmonic on $\bC^2\setminus\{0\}$. Since $\pi^*\omega=i\ddb\log\big(|x_0|^2+|x_1|^2\big)$,
the question is whether the functions
\begin{align*}
&-\dfrac13\log\dfrac{|x_1|^2}{2|x_0|^2}+\log\dfrac{3|x_1|^2}2=\dfrac13\log(2|x_0|^2)+\log\dfrac{3|x_1|^{4/3}}{2}
\quad\text{and}\\
&-1+\log(e^3|x_0|^2+|x_1|^2)
\end{align*}
are plurisubharmonic on $\bC^2\setminus\{0\}$: they obviously are.

To discuss $u_0$, we start with a formula for smooth functions $r,s$:
\[
\ddb\log( e^r+e^s)=
\dfrac{e^r\ddb r+e^s\ddb s}{e^r+e^s}+e^{r+s}\dfrac{\partial(r-s)\wedge\bar\partial(r-s)}{(e^r+e^s)^2}.
\]
Since $x_0\neq 0$ on $D_0$, we can use the local coordinate $\xi=x_1/x_0$. At least when $\xi\neq 0$
\[
u_0(z,x)=-|z|^2+\log\dfrac{e^{3|z|^2}|x_0|^2+|x_1|^2}{|x_0|^2+|x_1|^2}=
-|z|^2+\log(e^r+e^s)-\log(1+|\xi|^2),
\]
where $r=3|z|^2$, $s=\log|\xi|^2$. Hence 
\begin{align*}
p^*\omega+i\ddb u_0&=-idz\wedge d\bar z+\dfrac{3ie^{3|z|^2}\,dz\wedge d\bar z}{e^{3|z|^2}+|\xi|^2}+
ie^{r+s}\dfrac{\partial (r-s)\wedge\bar\partial(r-s)}{(e^r+e^s)^2}
\\
&=\dfrac{i(2e^{3|z|^2}-|\xi|^2)}{e^{3|z|^2}+|\xi|^2}dz\wedge d\bar z+
ie^r\dfrac{\alpha\wedge\bar\alpha}{(e^r+e^s)^2},
\end{align*}
where $\alpha=3\bar z\bar\xi\,dz-d\xi$. By continuity, this formula holds when $\xi=0$, too. Above, the coefficients of 
$i\,dz\wedge d\bar z$ and $i\alpha\wedge\bar\alpha$ are positive in $D_0$, and the forms $dz$ and $\alpha$ are
everywhere linearly independent. It follows that $p^*\omega+i\ddb u_0$ is indeed positive.
\end{proof}
\begin{prop} 
The formula
\begin{equation} 
u_{12}(x)=\begin{cases} u_1(x)\quad\text{if}\quad 2e^3|x_0|^2\le |x_1|^2\\ 
u_2(x)\quad\text{if}\quad 2e^3|x_0|^2\ge |x_1|^2\end{cases}
\end{equation}
defines a continuous $\omega$-subharmonic function $\bP_1\to[-\infty,\infty)$.
\end{prop}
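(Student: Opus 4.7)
The plan is to reduce $\omega$-subharmonicity of $u_{12}$ to the fact that $u_1$ and $u_2$ agree to first order on the gluing hypersurface $\Gamma=\{2e^3|x_0|^2=|x_1|^2\}$. Combined with the $\omega$-subharmonicity of $u_1$ and $u_2$ on all of $\bP_1$ from Proposition 10.2, a $C^1$ gluing argument rules out any singular $\delta$-contribution to $i\ddb u_{12}$ along $\Gamma$ and gives global $\omega$-subharmonicity.

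First I would work in the affine chart $x_0\neq 0$ with coordinate $\xi=x_1/x_0$, so $\Gamma$ becomes the circle $|\xi|^2=2e^3$ and both $u_i$ depend only on $s=|\xi|^2$. A direct simplification yields
\[
U_1(s)=\tfrac{2}{3}\log s+\tfrac{1}{3}\log 2+\log\tfrac{3}{2}-\log(1+s),\quad U_2(s)=-1+\log(e^3+s)-\log(1+s),
\]
from which $U_1'(s)=\tfrac{2}{3s}-\tfrac{1}{1+s}$ and $U_2'(s)=\tfrac{1}{e^3+s}-\tfrac{1}{1+s}$. At $s=2e^3$ both derivatives collapse to $\tfrac{1}{3e^3}-\tfrac{1}{1+2e^3}$, and $U_1(2e^3)=U_2(2e^3)$ (as forced by the very definition (10.4)). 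Since $s=\xi\bar\xi$ is smooth in $\xi$, this zeroth and first order match implies $u_{12}$ is of class $C^1$ in $\xi$ across $\Gamma$; elsewhere $u_{12}$ is smooth except at $(0:1)$. Continuity with values in $[-\infty,\infty)$ follows readily: as $s\to\infty$, $U_1(s)\sim-\tfrac{1}{3}\log s\to-\infty$, so setting $u_{12}(0:1):=-\infty$ provides the continuous extension, while near $(1:0)$ the function $u_{12}=u_2$ is finite and smooth.

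For $\omega$-subharmonicity, which is a local condition, off $\Gamma$ the claim is Proposition 10.2 applied to whichever of $u_1,u_2$ coincides locally with $u_{12}$. Near a point of $\Gamma$, write $\omega=i\ddb w$ in a local chart and consider $w+u_{12}$: this is $C^1$ and piecewise smooth with non-negative classical Laplacian on either side of $\Gamma$. The standard fact that a $C^1$ function on a domain which is smooth on either side of a smooth hypersurface has distributional second derivatives equal to the pieced-together classical ones---with no singular measure on the hypersurface---then shows that the distributional Laplacian of $w+u_{12}$ is non-negative. Hence $u_{12}$ is $\omega$-subharmonic across $\Gamma$. The only substantive step is the first-order matching at $s=2e^3$, which is not really an obstacle but rather the whole content of the proposition; it was built into the choice of $u_1$, $u_2$ and their common threshold $2e^3|x_0|^2=|x_1|^2$ via the constants appearing there.
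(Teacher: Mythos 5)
Your proof is correct and follows essentially the same route as the paper: both verify that $u_1$ and $u_2$ agree to first order on the gluing circle $\{|x_1|^2=2e^3|x_0|^2\}$ (your explicit computation of $U_1'$ and $U_2'$ at $s=2e^3$ matches the paper's observation that $u_2-u_1$ and its derivative vanish there) and then conclude that the distributional Laplacian acquires no singular part on that circle. The paper phrases the last step via local Lipschitz continuity of $du_{12}$, while you invoke the no-jump-in-normal-derivative lemma for $C^1$ piecewise-smooth functions; these are the same observation in slightly different packaging.
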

\begin{proof} 
Proposition 10.2 implies that $u_{12}$ is $\omega$-subharmonic away from the circle 
$K=\{x:2e^3|x_0|^2=|x_1|^2\}$. On $K$ itself $u_1,u_2$ agree to second order: writing $t=|x_1/x_0|^2$, near $K$
\[
u_2(x)-u_1(x)=-1+\log(t+e^3)+\dfrac13\log(t/2)-\log(3t/2),
\]
and the function on the right as well as its derivative vanish when $t=2e^3$. In particular, in (10.4) the two 
definitions for $u_{12}(x)$ when $x\in K$ are consistent, and $u_{12}$ is continuous. It also follows that 
 near $K$ $du_{12}$
exists and is Lipschitz continuous locally. Hence near $K$ the distributional Laplacian of $u_{12}$ is locally
$L^\infty$. Since $\omega+i\ddb u_{12}\ge 0$ as a current on $\bP_1\setminus K$, it follows that 
$\omega+i\ddb u_{12}\ge 0$ everywhere, and  $u_{12} $ is indeed $\omega$-subharmonic.
\end{proof}
\begin{prop} 
The formula
\begin{equation} 
v_0(z,x)=\begin{cases}u_0(z,x)&\text{if}\quad |x_1|^2\le 2e^{3|z|^2}|x_0|^2\\
u_{12}(x)&\text{if}\quad |x_1|^2\ge 2e^{3|z|^2}|x_0|^2\end{cases}
\end{equation}
defines a continuous $p^*\omega$-plurisubharmonic function $v_0:A\times\bP_1\to\bR$, and for each
$z\in A$ the function $u_0(z,\cdot)\ge v_0(z,\cdot)$ is the unique minimizer of $E(v)$ among admissible
$v\in\cH(\omega)$, $v\ge v_0(z,\cdot)$.
\end{prop}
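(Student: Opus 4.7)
The statement has three pieces to verify: (i) continuity and $p^*\omega$-plurisubharmonicity of $v_0$ on $A\times\bP_1$; (ii) admissibility of $u_0(z,\cdot)$ together with the barrier inequality $u_0(z,\cdot)\ge v_0(z,\cdot)$; (iii) extremality and uniqueness of $u_0(z,\cdot)$ as the minimizer. I sketch each in turn.

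For (i), continuity reduces to checking that $u_0(z,x)=u_{12}(x)$ whenever $|x_1|^2=2e^{3|z|^2}|x_0|^2$, a direct substitution in the explicit formulas (10.2), (10.3). Strict $p^*\omega$-plurisubharmonicity of $u_0$ on the open set $\{|x_1|^2<2e^{3|z|^2}|x_0|^2\}$ is Proposition~10.2. On the complementary open set, $v_0(z,x)=u_{12}(x)$ is independent of $z$ and $\omega$-subharmonic on $\bP_1$ by Proposition~10.4, hence $p^*\omega$-plurisubharmonic on the product. To glue across the interface I would follow the argument in the proof of Proposition~10.4 itself: the two pieces agree to first order on the interface (again a calculation from the explicit formulas), so the first distributional derivatives of $v_0$ are locally Lipschitz, its distributional $\ddb$ is locally $L^\infty$, and the pointwise nonnegativity of $p^*\omega+i\ddb v_0$ on each open piece upgrades to nonnegativity of the current on all of $A\times\bP_1$.

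For (ii), $u_0(z,\cdot)$ is admissible because, up to the $x$-independent summand $|z|^2/2$, it equals $W(g(z))$ for the diagonal matrix $g(z)=\mathrm{diag}(e^{3|z|^2/4},e^{-3|z|^2/4})\in\SL(2,\bC)$; hence $\omega_{u_0(z,\cdot)}=g(z)^*\omega$. The inequality $u_0(z,\cdot)\ge v_0(z,\cdot)$ is an equality on $D_0$ by construction, and on the complement it reduces to pointwise dominance of $u_0(z,\cdot)$ over each branch of $u_{12}$ on the relevant subregion. Setting $t=|x_1/x_0|^2$ and substituting in (10.2), (10.3), both inequalities become elementary one-variable comparisons that hold with equality along the respective separating circles ($t=2e^{3|z|^2}$ for $u_1$, $t=2e^3$ for $u_2$); the sign of the difference is then determined by a second-order expansion there.

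For (iii), I apply the sufficiency direction of Proposition~4.3: it suffices to produce a Borel probability measure $m$ on the contact set $C_{u_0(z,\cdot)}\supseteq D_0$ with $\int\mu\bigl(g(z)x\bigr)\,dm(x)=0$ in $\mathfrak{su}(2)^*$. Take $m$ invariant under the diagonal $S^1$-action $[x_0:x_1]\mapsto[x_0:e^{i\theta}x_1]$; the two off-diagonal components of the integral vanish by symmetry, and by (5.3) the remaining diagonal component is a radial average of
\[
\frac{e^{3|z|^2/2}-e^{-3|z|^2/2}|\xi|^2}{e^{3|z|^2/2}+e^{-3|z|^2/2}|\xi|^2},\qquad \xi=x_1/x_0,
\]
which equals $+1$ at $\xi=0$ and $-1/3$ on the boundary of $D_0$. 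An intermediate-value adjustment of the radial density of $m$, kept supported in the open interior of $D_0$, makes this integral vanish. Uniqueness then follows from Proposition~5.2: condition~(5.1) is known for $\SU(2)$ acting on $\bP_1$ by the first example of Section~5, and $m$ is supported in the open set $D_0$ where $v_0=u_0\in C^\infty$ and $\omega+i\ddb v_0=\omega_{u_0}>0$, so all hypotheses of Proposition~5.2 are satisfied.

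The main obstacle is the calculational layer in (i) and (ii): verifying that $u_0$ and $u_{12}$ match to first order across the $z$-dependent interface, and establishing pointwise dominance of $u_0$ over each branch of $u_{12}$ on $D_1\cup D_2$. Once these algebraic facts are in hand, (iii) is an orbit-averaging argument followed by a direct appeal to the general theory of Sections~4 and~5.
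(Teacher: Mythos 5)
Your proposal is correct and follows essentially the same route as the paper: glue $u_0$ and $u_{12}$ across the interface using their first-order agreement there, and verify the moment-map condition of Proposition 5.2 with an $S^1$-invariant probability measure supported in the interior of $D_0$ (the paper simply takes the rotation-invariant measure on the single circle $|x_1/x_0|=e^{3|z|^2/2}$, where your radial integrand vanishes). The one step to tighten is the barrier inequality $u_0(z,\cdot)\ge u_{12}$ on all of $D_1\cup D_2$ rather than only near the separating circles --- a second-order expansion at the interface is purely local --- whereas the paper obtains the global inequality at once by observing that $u_1(x)$ and $u_2(x)$ are exactly the minima of $u_0(\cdot,x)$ over $|z|\in[0,1]$ on the respective regions, which is the ``elementary one-variable comparison'' you allude to.
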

\begin{proof}
Fix $x\in\bP_1$. Differentiation shows that as a function of $|z|$, $0\le|z|\le 1$,
\[
e^{2|z|^2}|x_0|^2+e^{-|z|^2}|x_1|^2
\]
has a strict minimum. If $|x_1|^2\le2e^3|x_0|^2$, the minimum occurs when $|x_1|^2=2e^{3|z|^2}|x_0|^2$
(write $|z|_\text{min}$ for this value of $|z|$); if $|x_1|^2\ge2e^3|x_0|^2$, the minimum occurs when $|z|=1$.
It follows that if $|x_1|^2<2e^3|x_0|^2$ then
\[
u_0(z,x)\ge u_0(|z|_\text{min}, x)=u_1(x),
\]
and if $|x_1|^2\ge2e^3|x_0|^2$ then $u_0(z,x)\ge u_0(1,x)=u_2(x)$. From this we first infer that in (10.5) 
the two definitions for
$v_0(z,x)$, when $|x_1|^2=2e^3|x_0|^2$, are consistent, and $v_0\le u_0$ is continuous. The second inference is that
$u_0$ and $u_{12}$ agree to second order at 
\[
\tilde K=\{(z,x)\in A\times\bP_1: |x_1|^2= 2e^{3|z|^2}|x_0|^2\}.
\]
Since $v_0$ is obviously $p^*\omega$-plurisubharmonic on
$(A\times\bP_1)\setminus\tilde K$, it follows---as in the proof of Proposition 10.3---that it is
$p^*\omega$-plurisubharmonic on all of $A\times\bP_1$.

Now fix $z\in A$. To show that $u_0(z,\cdot)$ uniquely minimizes Monge--Amp\`ere energy among admissible 
$v\in\cH(\omega)$, $v\ge v_0(z,\cdot)$, we will apply Proposition 5.2. The diagonal matrix 
$$
\gamma=\text{diag}(e^{3|z|^2/4}, e^{-3|z|^2/4})\in\SL(2,\bC)
$$
acts on $\bP_1$ as the diagonal matrix 
$\text{diag}(e^{|z|^2}, e^{-|z|^2/2})$, and pulls back $\omega=i\ddb\log(|x_0|^2+|x_1|^2)$ to $\omega+i\ddb u_0(z,\cdot)$.
Let $X\in\mathfrak{su}(2)$. It is a matrix of form
\[
X=i\begin{pmatrix} P & Q\\
\bar Q & -P
\end{pmatrix},
\]
with $P\in\bR$, $Q\in\bC$. Using formula (5.3) for the moment map in $\bP_n$,
\begin{equation} 
\big\langle\mu(\gamma(x)),X\big\rangle=
\dfrac{Pe^{3|z|^2/2}|x_0|^2+Q\bar x_0x_1+\bar Q x_0\bar x_1-Pe^{-3|z|^2/2}|x_1|^2}
{e^{3|z|^2/2}|x_0|^2+e^{-3|z|^2/2}|x_1|^2}
\end{equation}
up to a constant factor that will play no role. If we denote by $m_r$ the rotation invariant probability measure on the circle 
$K_r=\{(1:re^{it})\in\bP_1\mid t\in\bR\}$, $r>0$, (10.6) gives
\begin{align*}
\int_{K_r}\langle\mu\circ\gamma,X\rangle\,dm_r=&
\dfrac1{2\pi}\int_0^{2\pi}\dfrac{Pe^{3|z|^2/2}+Qre^{it}+\bar Qre^{-it} -Pr^2e^{-3|z|^2/2}}{e^{3|z|^2/2}+r^2e^{-3|z|^2/2}}\,dt\\
=&\dfrac{Pe^{3|z|^2/2}-Pr^2e^{-3|z|^2/2}}{e^{3|z|^2/2}+r^2e^{-3|z|^2/2}}.
\end{align*}
Thus $\int (\mu\circ\gamma)\,dm_r=0$ if $r=e^{3|z|^2/2}$. By (10.5) $\bar v(z,\cdot)$ in a neighborhood of 
$K_r$ agrees with $u_0(z,\cdot)$, and $u_0(z,\cdot)$ is smooth and strictly $\omega$-subharmonic there
(Proposition 10.2). Hence
Proposition 5.2 gives that $u_0(z,\cdot)$ is indeed the unique minimizer of $E$.
\end{proof}

\begin{proof}[Proof of Theorem 10.1] 
In the setup of Proposition 10.4, let $B\subset A$ be a non--empty, relatively compact open subset. It follows
from \cite[Theorem 2]{BK07} and from Dini's theorem that there is a decreasing sequence of strictly 
$p^*\omega$-plurisubharmonic functions $v^j\in C^\infty(B\times \bP_1)$ converging uniformly to
$v_0|B\times\bP_1$.  By Propositions 5.1, 10.4
$v^j(b,\cdot), v_0(b,\cdot)\in C_\text{unique}$ for each $b\in B$ (see section 6 for this notation).
Define  $u^j:B\times\bP_1\to\bR$ so that $u^j(b,\cdot)\in\cH(\omega)$ minimizes $E(v)$ among admissible $v\ge v^j(b,\cdot)$, 
for each $b\in B$. 
Since $v^j$ decrease as $j$ grows, so do $u^j$. 
Hence Proposition 6.1 implies that the $u^j$ are continuous and decrease to $u_0$ on $B\times\bP_1$, 
locally uniformly by 
Dini's theorem. Not all $u^j$ can be $p^*\omega$-plurisubharmonic, since their limit is not. This means
that one of the $v^j$ will do as $\bar v$ of the theorem.
\end{proof}

\end{document}